\newtheorem{theorem}{Theorem}[section]
\newtheorem{proposition}[theorem]{Proposition}
\newtheorem{lemma}[theorem]{Lemma}
\newtheorem{remark}[theorem]{Remark}
\newenvironment{proofof}[1]{\par {\sc Proof of #1.} \hskip 5pt}{\hfill\qed\par}
\title{Comparisons of Ticket  and Standard Queues}
\author{
        Otis B. Jennings
    \\
        otisbjennings@gmail.com
     \and
        Jamol Pender \\
        School of Operations Research and Information Engineering\\
        Cornell University \\
        jjp274@cornell.edu
     }
\begin{document}

\newcommand{\R}{\mathcal R}
\newcommand{\Rp}{\mathcal{R}_+}
\newcommand{\bD}{\bf D}
\newcommand{\posInt}{\mathcal N}
\newcommand{\Prob}{\mathbb P}
\newcommand{\Exp}{\mathbb E}

\maketitle
\begin{abstract}

	Upon arrival to a ticket queue, a customer is offered a slip of paper with a number on it -- indicating the order of arrival to the system -- and is told the number of the customer currently in service.
	The arriving customer then chooses whether to take the slip or balk,  a decision based on the perceived queue length and associated waiting time.
	Even after taking a ticket, a customer may abandon the queue, an event that will be unobservable until the abandoning customer would have begun service.
	In contrast, a standard queue has a physical waiting area so that abandonment is apparent immediately when it takes place and balking is based on the actual queue length at the time of arrival.
	

	We prove heavy traffic limit theorems for the generalized ticket and standard queueing processes, discovering that the processes converge together to the same limit,
		a regulated Ornstein-Uhlenbeck (ROU) process.
	One conclusion is that for a highly utilized service system with a relatively patient customer population, the ticket and standard queue performances are asymptotically indistinguishable
		on the scale typically uncovered under heavy traffic approaches.
	Next, we heuristically estimate several performance metrics of the ticket queue, some of which are of a sensitivity typically undetectable under diffusion scaling.
	The estimates are tested using simulation and are shown to be quite accurate under a general collection of parameter settings.

\end{abstract}

\section{Introduction}

In many service settings, newly arriving customers are given information about the number of individuals preceding them in line, even when this line is virtual.
There are several ways in which this information may be passed on.
For example, a customer visiting either a delicatessen or the department of motorized vehicles (DMV) is offered a ticket with a number on it and, via some physical display, is informed of the current customer being serviced.
In restaurants, dinner parties may either be told about the estimated wait or told how many similarly configured dinner parties are ahead of them.
Often these two forms of information are roughly interchangeable: given the service rate, knowledge of the queue length yields an estimate of the delay, and vice versa.
Being informed about the delay in service provision, customers then choose whether to join the queue or to balk.

The fact that a customer initially accepts the estimated delay and joins the queue does not guarantee that the customer will wait around until service can begin.
Customers may renege on their initial decision and abandon the queue.
In environments where customers are physically waiting in line for their service -- such as at a bank or grocery store -- abandonment is immediately apparent to service providers and other customers alike.
However, neither the delicatessen service personnel, fellow ticket holders, nor potential ticket holders are aware when someone has chosen to abandon their ticket.
This event is not discovered until the ticket's number is called and no one responds.
In general, the number of outstanding tickets may be larger than the number of customers actually waiting for service.

A {\it ticket queue} refers to the setup typically employed in delicatessens, but can be thought of more generally as a mechanism for tracking the number of {\it potential} customers yet to be served
	and for maintenance of a first-come-first-served protocol.
By potential it is meant that these customers have been triaged, joined the queue,  and have committed in principle to be served, yet may ultimately abandon before service can actually begin.
Some of the customers may renege on this implicit commitment and leave;  reneging customers typically will not inform the system manager of their decision to forgo their place in line.
As a result, what is generally perceived as the queue length -- the number of potential customers to be served --  will in fact be an upper bound.

A visual comparison of abandonment in the two queueing types is depicted in Figure \ref{fig:ticket}.
	\begin{figure}
		\begin{center}
			\includegraphics[width=5in]{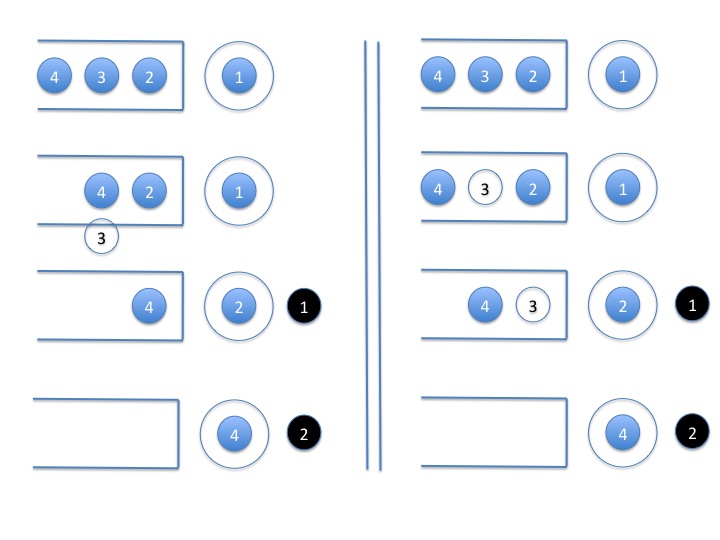}
		\end{center}
		\caption{Queueing dynamics of the ticket and standard queues when abandonment occurs.} \label{fig:ticket}
	\end{figure}
The left side of the figure shows a progression of three events as they are experienced in the standard queue.
The first image has a queue with four customers, numbered 1 through 4 and with customer \#1 currently in service.
In the next frame, customer \#3 abandons the queue, the system immediately detects the defection and customer \#4 replaces
	customer \#3 in line.
Then customer \#1 completes service.
Finally customer \#4 reaches the front of the queue and begins service in the 4th frame after the service completion for customer \#2.

The right side of the image depicts the same sequence of events but as they are experienced in a ticket queue.
Notice that when customer \#3 abandons his ticket, the system is unaware.
Thus, the perceived queue length (including the customer in service) is four.
After customer \#1 completes service, all customers advance and customer \#2 begins service.
After customer \#2's service is complete, an attempt is made to handle customer \#3, and it is determined immediately
	that customer \#3 has left.
So customer \#4 begins service at this time.
What we see here is that in two of the four frames, the ticket queue's perceived length is larger than that of the standard queue.

Several questions emerge:
What are the dynamics of the collection of outstanding tickets?
What is the actual number of customers remaining, i.e., those who have yet to renege?
What fraction of customers renege?
What fraction of customers balk?
Does the difference in implementation of ticket and standard queues lead to a marked difference in performance between the two customer organizing methods?

Some heuristics seeking to address these questions were provided by \cite{ref:XGO}.
Our paper revisits the ticket queue to provide new heuristics and intuition that is 
	complementary to that of \cite{ref:XGO}.
In particular, informed by a heavy traffic limit theorem, we conclude that for highly utilized systems with relatively patient customers,
	ticket queues and conventional queues are indistinguishable.
One of the conclusions of \cite{ref:XGO} is that for heavily loaded systems with relatively \textit{impatient} customers,
	the ticket queue experiences a higher percentage of balking.
In addition to these divergent insights, what sets this paper apart from  \cite{ref:XGO} is that it provides estimates of the distribution of the queue length process,
	which can then be used to estimate other stochastic elements.
More details about these estimates are provided below.
Moreover, general assumptions about the abandonment distribution and balking distributions are employed; random variables in \cite{ref:XGO}
	are all exponentially distributed.

The paper appeals to heavy traffic limit theory in the pursuit of its approximations and comparisons of the ticket and standard queues.
In this regard, it is similar in approach to \cite{ref:WG}, which studies a generalized diffusion-scaled single-server queue with abandonment {\it or} balking.
Our paper concerns a realistic situation where there is both balking  {\it and} abandonment.
Moreover, technical complications arise as we base balking on the queue that customers perceive, 
	whereas abandonment is based on the delays that customers experience.
Ultimately, what one obtains under diffusion scaling is a regulated (at zero) Ornstein-Uhlenbeck (ROU) process whose constant drift is related to the difference between the arrival and service rates
	and whose restorative drift involves the derivatives of the abandonment and balking distributions, 
	both evaluated at zero.
The diffusion limit of the critically loaded ticket queue is identical, despite the fact that for the ticket queue, abandonment of a ticket
	is not matched with a shortening of the queue until the ticket of the abandoning customer reaches the front of the queue.
Further, given the same primitive random variable elements, the diffusion-scaled ticket and standard queueing processes converge together to the same diffusion limit,
	a stronger notion of process similarity.
In other words, not only are the dynamics of the ticket and standard queues asymptotically similar, the processes are asymptotically coupled.

As mentioned above, we estimate the distribution of the queueing processes employing heuristic interpretations of the limit theorem.
Additionally, we estimate  performance metrics such as abandonment and balking probabilities and the expected number of abandoned tickets in circulation.
Each of these additional quantities is typically lost under conventional heavy traffic limit theory, but are manifest on a smaller -- i.e., more sensitive -- scaling.
The approach for estimating the expected number of abandoned tickets in circulation provides the additional insight needed to compare the subtle differences between the standard and ticket queues.

We should also mention that the differences between standard and ticket queues also highlights the different ways that delay information can be communicated to the customer.  This communication between the service and its customers is important because customers will make their decision to wait or to leave the queue based on the information that they receive from the manager of the system.  For example, see the following papers on research on queues with delay announcements and estimation \cite{ref:IW,ref:AB,ref:W,ref:W2,ref:ASW,ref:GZ,ref:MP2}.  

As for the assumptions used throughout, the paper also fits within the growing literature on queues with generally distributed abandonment distributions, specifically those that are not exponentially distributed.
Some recent examples of such papers include \cite{ref:ZM,ref:LW,ref:BH,ref:WG,ref:RW,ref:JR,ref:GRZ,ref:JPu}. 
The first two of these are in a multiserver setting, whereas the others are in the single server regime.
The last two papers use measure-valued processes to model system dynamics.

The remainder of the paper proceeds as follows.
Both the ticket queueing model and standard queueing model (with abandonment and balking) are presented 
	in the next section.
The main result, Theorem \ref{thm:main}, is presented in Section \ref{sec:construction}.
Section \ref{sec:approximations} contains the heavy traffic-inspired approximations for performance measures and some interpretations.
The proofs of the main results are provided in Section \ref{sec:proof}.
Extensive numerical results are presented in Section \ref{sec:numerics}.
Concluding remarks and extensions follow.

\subsection{Notation}
We conclude this introduction with notational conventions.
Let $\R$ denote the set of all reals, $\Rp$ the set of nonnegative reals, and $\posInt$ the set of 
	strictly positive integers.
For a Polish space $\mathcal{S}$, let $\bD(\Rp,\mathcal{S})$ denote the space of right continuous
	with left limits functions from $\Rp$ into $\mathcal{S}$.
The Polish spaces we consider here are $\Rp$ and $\Rp^2$.

For ease of exposition, quantities that are related to the ticket queue will be appended with the subscript $T$ and those associated with the standard queue will have the subscript $S$ .
We use the subscript $\alpha$ as a place holder for either ticket ($\alpha = T$) or standard ($\alpha = S$) queues.
The standard hazard rate function $h:\R \mapsto \Rp$ is the ratio of the density of the standard normal distribution 
	to the tail of the standard normal distribution: $h(x) = \phi(x) / (1-\Phi(x))$ for every $x \in \R$.
For enumeration purposes, we use the letters $i, j,$ and $k$ to represent nonnegative integers. 
The letters $q, r,s,$ and $t$ are used to represent time.
Typically, the symbols $\delta, \epsilon, \eta$ are used to represent small positive real numbers.
In contrast, the letters $K$ and $L$ are used to 
	represent large quantities, predominately as upper bounds.

\section{The model basics} \label{sec:model}

In this section, we provide the primitive random variables for the queueing processes, describe the construction of the ticket and standard queues, and discuss the intricacies of simulating the ticket queues.

\subsection{Random variables}

We construct the ticket and standard queueing processes using the same collection of random variables.
Each customer \textit{that arrives after time zero}
	has an interarrival time, (potential) service time, initial time tolerance and (potential) abandonment time.
For the $i^{th}$ customer, these times are captured in the quadruple $(u_i, v_i, b_i, d_i)$. 
The letters 'b' and 'd' denote balking and deadline, respectively.
The mutually independent  sequences $\{ u_i, i\geq 1\} $, $\{ v_i, i\geq 1\} $, $\{ b_i, i\geq 1\} $, $\{d_i, i\geq 1\} $,
	which are all defined on the same probability space $( \Omega, \mathcal{F}, \mathcal{P}) $, are each i.i.d.
The exogenous arrival rate of jobs is $\lambda$ and the service rate is $\mu$.
The sequences $\{ u_i, i\geq 1\} $ and $\{ v_i, i\geq 1\} $ have unitary means and the interarrival and service times of the $i_{th}$ customer are $u_i/\lambda$ and $v_i / \mu$, respectively.
The arrival time of the $i_{th}$ job occurs at time
$$
	t_i = (1/\lambda) \sum_{j=1}^i u_j.
$$
The unitary interarrival and service times have variances $\sigma^2_a$ and  $\sigma^2_s$, respectively.
The quantities $b_i$ and $d_i$ represent the random variables associated with balking and reneging, respectively.
Let $F_b$ and $F_d$ denote their respective cumulative distribution functions.
We assume the these functions both vanish at zero.
Moreover, their derivatives exist at zero and the sum of these derivatives is strictly positive.
We define the sum as
\begin{equation} \label{eq:theta}
	\theta
	\equiv
		F^\prime_b(0) + F^\prime_d(0).
\end{equation}

\subsection{Initial conditions} \label{sec:initialConditions}

Consider jobs that are present at time zero.
These jobs do not require arrival times.
Neither do they require balking random variables.
Hence, for these initial jobs, we provide 
	only two sequences of random variables: unitized potential service times, $\{ \hat{v}_i, i\geq 1\} $,
	and \textit{residual} deadline quantities,  $\{ \hat{d}_i, i\geq 1\} $.
Let $Q(0)$ denote the number of initial jobs.
If initial job $i \le Q(0)$ has not begun service before time $\hat{d}_i$, this job will abandon. 
If this initial job has not abandoned, then its service time will be $\hat{v}_i / \mu$.
The initial potential service times are i.i.d. and have the same distribution as the potential service times 
	of the jobs arriving after time zero.
The residual deadlines do not necessarily have the same distribution. 
Let $\hat{F}_{d,i}$ be the cumulative distribution of $\hat{d}_i$.
We impose the following uniform restriction on their distributions near zero:
There exists an $\hat{f} >0$ and an $h_0 > 0$ such that
$$
	\sup_i \frac{ \hat{F}_{d,i}(h)}{h} \le \hat{f}, \qquad \forall h \le h_0.
$$

The workload at time 0, denoted $W(0)$, is the amount of effort required to process jobs present at time zero.
Because of deadlines running out before service has begun, some of the $Q(0)$ jobs present at time zero 
	will not be served.
Therefore, it is not as simple as adding up the service times of the first $Q(0)$ jobs.
Instead, let $\hat{w}_i$ denote the cumulative amount of server effort required among the first $i$ jobs in-queue 
	at time zero.
The $i^{th}$ job will be served if and only if it is sufficiently patient, or if $\hat{d}_i > \hat{w}_{i-1}$, where $\hat{w}_0 =  0$.
We can define the $\hat{w}_i$'s recursively:
$$
	\hat{w}_i 
		= \sum_{j=1}^i \frac{\hat{v}_j}{\mu} \cdot 1(\hat{d}_j > \hat{w}_{j-1})
		= \hat{w}_{i-1} + \frac{\hat{v}_i}{\mu} \cdot 1(\hat{d}_i > \hat{w}_{i-1}),
		 \qquad i \ge 1.
$$
It follows that $W(0) = \hat{w}_{Q(0)}$.

\subsection{The queueing processes}

Let the process $Q_T= \{Q_T(t), t \ge 0\}$ track the dynamics of the ticket queue and $Q_S= \{Q_S(t), t \ge 0\}$ track that of the standard queue.
The difference between the ticket and standard queue is the timing of when the abandonment is accounted for.
Otherwise the system dynamics are identical.
For example, customers arrive to the ticket and standard queue in the same sequence, with the same balking and reneging tendencies, and with the same service requirements.
The common arrival process is $A = \{A(t), t \ge 0\}$.

All other processes are indexed by the abandonment protocol $\alpha \in \{S,T\}$.
In particular, $B_\alpha = \{B_\alpha(t), t \ge0\}$  is the balking process
	that tracks as a function of time the number of arriving customers who leave immediately upon arrival.
The reneging process $R_\alpha = \{R_\alpha(t), t \ge0\}$ tracks the number of jobs 
	\textit{that arrive after time zero}, who have abandoned,
	and whose abandonment has been detected in the system.
Recall that in the standard queue an abandoned customer is immediately detected once they leave, whereas for the ticket queue
	abandonment is only apparent at the moment at which service for that customer would have begun.
The process $S_\alpha = \{S_\alpha(t), t \ge0\}$ tracks the number of service completions, 
	\textit{of jobs that arrived after time zero},
	as a function of how
	much effort the server has expended.
Relatedly, the busy time process $T_\alpha = \{T_\alpha(t), t \ge0\}$ reports how much time has been spent processing
	jobs as a function of time $t$, including initial jobs.
The idle time process $I_\alpha = \{I_\alpha(t), t \ge0\}$  is complementary to $T_\alpha$: $I_\alpha(t) = t - T_\alpha(t)$ for each $t \ge 0$.
Lastly, the workload process $W_\alpha = \{W_\alpha(t), t \ge0\}$ reports as function of time the amount of effort required by the
	server to process those customers currently in-queue that will not abandon.
	
The system must clear out all initial jobs in-queue before it can start processing jobs that arrive after time zero.
Let $\hat{Q}_\alpha = \{\hat{Q}_\alpha(t), t \ge 0\}$ track the number of remaining initial jobs at time $t$.
Some of these jobs may abandon.
Let $\hat{R}_\alpha = \{\hat{R}_\alpha(t), t \ge0\}$ track, as a function of time, the number of jobs 
	\textit{who arrive before time zero}, who have abandoned,
	and whose abandonment has been detected in the system.
The standard and ticket queues start with the same collection of jobs at time zero. 
Hence, among these initial jobs, the same subset of jobs abandon both the standard and ticket queues.
What is different about the abandonment processes is the timing of when the abandonment is detected.
What is identical between the ticket and standard queues is the initial job service completion process:
$\hat{S} = \{\hat{S}(t), t \ge0\}$, which is defined in the next section.

Equations governing the ticket and standard queue are below.
For each $t \ge 0$ and  $\alpha \in \{S,T\},$
\begin{equation} \label{eq:mod:queue}
	Q_\alpha(t)
	=
		\hat{Q}_\alpha(t)
		+
		A(t)
		-
		B_\alpha(t)
		-
		R_\alpha(t)
		-
		S_\alpha( (T_\alpha(t) - W(0))^+ ), 
\end{equation}
\begin{equation} \label{eq:mod:hatQueue}
	\hat{Q}_\alpha(t)
	=
		Q(0)
		-
		\hat{R}_\alpha(t)
		-
		\hat{S}(t), 
\end{equation}
\begin{equation} \label{eq:mod:arrivals}
	A(t) = \sup \left\{j \ge 0: \sum_{i=1}^j u_i/\lambda \le t \right \}
\end{equation}
\begin{equation} \label{eq:mod:balking}
	B_\alpha(t)
	=
		\sum_{i=1}^{A(t)} 1(b_i \le Q_\alpha(t_i-) / \mu),
\end{equation}
\begin{equation} \label{eq:mod:renegingT}
	R_T(t)
	=
		\sum_{i=1}^{A(t)}
			1(b_i > Q_T(t_i-) / \mu)
			\cdot
			1(d_i \le W_T(t_i-))
			\cdot
			1(W_T(t_i-) \le t - t_i), 
\end{equation}
\begin{equation} \label{eq:mod:renegingHatT}
	\hat{R}_T(t)
	=
		\sum_{i=1}^{Q(0)}
			1(\hat{d}_i \le \hat{w}_{i-1})
			\cdot
			1(\hat{w}_{i-1} \le t ), 
\end{equation}
\begin{equation} \label{eq:mod:renegingS}
	R_S(t)
	=
		\sum_{i=1}^{A(t)}
			1(b_i > Q_S(t_i-) / \mu)
			\cdot
			1(d_i \le \min(W_S(t_i-), t - t_i)), 
\end{equation}
\begin{equation} \label{eq:mod:renegingHatS}
	\hat{R}_S(t)
	=
		\sum_{i=1}^{Q(0)}
			1(\hat{d}_i \le \min(\hat{w}_{i-1}, t)), 
\end{equation}
\begin{equation} \label{eq:mod:busytime}
	T_\alpha(t) = \int_0^t 1(Q_\alpha(s) > 0) ds, 
\end{equation}
\begin{equation} \label{eq:mod:idletime}
	I_\alpha(t) = t - T_\alpha(t),
\end{equation}
and
\begin{equation} \label{eq:mod:workload}
	W_\alpha(t)
	=
		W(0)
		-
		T_\alpha(t)
		+
		\sum_{i=1}^{A(t)}
			(v_i/ \mu)
			\cdot
			1(b_i > Q_\alpha(t_i-)/ \mu)
			\cdot
			1(d_i > W_\alpha(t_i-)).
\end{equation}

Interpreting \eqref{eq:mod:queue} and \eqref{eq:mod:hatQueue}, 
	the queue length process 
	consists of the remaining initial jobs and
	may increase with each arrival, provided the corresponding customer does not balk.
In addition to the departure of the initial jobs, 
	the queue length process decreases whenever there is an abandonment or a service completion
	among the jobs that arrive after time zero.
Initially, service is allocated entirely to the initial jobs and remains so until, $W(0)$, when those jobs have departed entirely.
At this point service allocation is given entirely to jobs arriving after time zero.
The initial jobs experience only abandonment and service completion. The remaining job process is a decreasing function,
	hits zero, and remains there.

As for the balking process and \eqref{eq:mod:balking},
	an arriving customer joins if their initial delay threshold $b_i$ is sufficiently large.
When a customer arrives to the system, it is first triaged and given the opportunity to join the queue.
Under the ticket queue implementation, joining the queue involves the acceptance of the offered numbered ticket.
In addition to knowing which ticket it will be given, the customer is told the number of the ticket holder currently in service.
For the standard queue, joining the queue involves standing in a physical line.
In both cases, the decision of whether or not to join the queue is based on the customer's expectation of the delay until service and 	her tolerance for such a delay.
Customers convert the queue information into an expectation of delay until service.
We assume the conversion is naive and the same for both ticket and standard queueing environments:
Given the queue length, the customer estimates the delay by dividing the queue length by the service rate $\mu$, 
	a quantity assumed to be known by all customers.
That is, if the $i_{th}$ customer arrives at time $t$, the customer joins the queue if $b_i > Q_\alpha(t-)/\mu$;
	otherwise the customer balks.
The sequence of balking tolerances has common distribution function $F_b$.
Naturally, the probability that a customer arriving at time $t$ balks is $F_b(Q_\alpha(t-) / \mu)$.

Consider \eqref{eq:mod:renegingT} and \eqref{eq:mod:renegingS}.
A customer who joins the queue is not guaranteed to stick around for service.
If the delay that customer $i$ experiences in the queue reaches $d_i$ then that customer will abandon from the queue.
The time that a customer arriving at time $t$ would have to wait is captured by $W_\alpha(t-)$.
An analogous workload process is the main object of study in \cite{ref:JR}.
The distribution of the abandonment time random variables $d_i$ is denoted $F_d$; the `d' stands for customer \textit{deadline}.
It follows that a non-balking customer arriving at time $t$ will abandon the queue with probability $F_d(W_\alpha(t-))$.
What sets the ticket queue apart from the standard one is the time at which the process $Q_\alpha$ reflects the abandonment of a job.
Hence the need to express $R_T(t)$ and $R_S(t)$ separately in \eqref{eq:mod:renegingT} and \eqref{eq:mod:renegingS},
	respectively.
Consider \eqref{eq:mod:renegingT} and how it captures customer abandonment.
Not only must the tolerance $d_i$ be smaller than the delay that the customer must endure before service,
	the system does not know that the customer has abandoned until that delay has expired.
Equations \eqref{eq:mod:renegingHatT} and \eqref{eq:mod:renegingHatS} are the analogous formulations for 
	reneging customers who are present at time zero. 

The workload process is also referred to as the virtual waiting time process because it tracks,
	as a function of time, the amount of time a sufficiently patient, non-balking customer would have to wait before receiving service.
The virtual waiting time process increases by the service time whenever a job arrives to the system that will eventually receive service.
The process decreases at rate one whenever it is greater than zero, or equivalently, whenever the queue length is nonzero.
The cumulative amount that the workload has decreased by time $t$ is precisely equal to the total busy time $T_\alpha(t)$.

\subsection{State space descriptors and simulation of the ticket queue}

Simulating the ticket queue is more complicated than simulating the standard queue.
We describe below the intricacies of simulating the ticket queue under both Markovian and non-Markovian assumptions.
We use simulation later to assess the accuracy of our approximations and heuristics.

Under Markovian assumptions, the state space of the ticket queue can be captured by a vector of zeros and ones.
The length of the vector corresponds to the number of outstanding tickets, including the ticket of the customer currently in service. (For convenience, assume that the first element of the vector is the leftmost element.)
If the vector has a nonzero length, the first element corresponds with the customer in service and by convention is a one.
The other elements correspond with the other unresolved tickets.
Further, the order of these elements reflects the relative order of the corresponding customers' arrivals and, under our first-come-first-served assumptions, the order in which resolution will take place.
Ones in the vector represent customers who have not abandoned the queue.
Some of these customers may abandon before resolution takes place.
When a customer abandons, the corresponding element turns into a zero.
When service of a sufficiently patient customer takes place, the state vector shifts to the left because at least the first element of the vector must be removed.
Either the second element is a zero or it is a one.
In the latter case, the entire state vector shifts by one element.
If the second element is a zero, this represents an abandoned ticket.
Starting with this zero in the second element, all contiguous zeros will be will be removed from the state descriptor.
The assumption here is that resolution of abandoned tickets is instantaneous.
When a job arrives to the system and chooses to join the queue a one is appended to the end of the state vector.
If the customer balks, no change in the state takes place.

The system transition is governed by exponential clocks for each unresolved ticket that has yet to be abandoned and is not being processed,
	one clock for the job in service, and one for the next arriving job.
If the clock associated with an unresolved ticket not in service expires,
	then the associated customer abandons and the element in the state descriptor changes from a 1 to a 0.
If there is an arrival, then another random variable is generated and compared to the weighted queue length 
	to determine whether the customer balks;
	if balking occurs the state does not change.
If the clock associated with the job in service expires, service completion ensues and the state changes as described above.

Alternatively, one could have one exponential clock for all unresolved tickets that are not in service and have not been abandoned.
The rate of this clock is equal to the number of such tickets multiplied by the abandonment rate of a single individual.
If this clock is the one that expires next  then the actual abandoning customer is found by 
	randomly choosing between the
	non-abandoned waiting customers with equal probability.
When there is a change in system state, this clock must be recalculated because the number of unresolved tickets 
	 will have changed as well. 

Under general assumptions on the random variables, the state space must contain the residual interarrival time of the next customer to arrive, the residual service time of the customer at the front of the queue,
	and for each customer yet to reach the front of the queue, the residual abandonment time.
There is an alternative state space formulation, if one is content with only knowing which of the customers in-queue will \textit{eventually} be served.
For this alternative, one must track the virtual waiting time process, $W_T$, which yields as a function of time the amount of time that a customer must wait until service begins, and the eventual service time of jobs that will be served.
These service times can be kept in a vector, similar to the vector of zeros and ones above. 
From the time of their arrival, jobs that will have abandoned before reaching the front of the queue have a zero in their
	corresponding element of the vector.
The virtual waiting time process jumps at the time of an arrival by the service time of the corresponding customer only if this 
	customer actually joins the queue and is sufficiently patient;
	see for instance \cite{ref:WG} or \cite{ref:JR}.
What is lost in this formulation is the timing of the individual jobs' abandonment times.
Gained is the freedom from having to track residual abandonment times for each job in-queue.

\section{Comparing the queue processes} \label{sec:construction}
In this section we provide the main result, a heavy traffic limit theorem that serves as the theoretical underpinning of 
	the heuristics forwarded in the subsequent section.

To facilitate comparing the ticket and standard queue, we appeal to heavy traffic limit theory.
To this end, we consider a sequence of systems, indexed by $n$.
The arrival and service rates of the $n_{th}$ system are $\lambda^n$ and $\mu^n$.
Equations \eqref{eq:mod:queue}--\eqref{eq:mod:busytime} have straightforward analogs with the $\lambda$ and $\mu$ replaced by $\lambda^n$ and $\mu^n$, respectively.
For each $t \ge 0,$ and $\alpha \in \{S,T\},$
\begin{equation} \label{eq:mod:queue^n}
	Q^n_\alpha(t)
	=
		\hat{Q}_\alpha^n(t)
		+
		A^n(t)
		-
		B^n_\alpha(t)
		-
		R^n_\alpha(t)
		-
		S^n_\alpha(T^n_\alpha(t) - W^n(0)), 
\end{equation}
\begin{equation} \label{eq:mod:hatQueue^n}
	\hat{Q}^n_\alpha(t)
	=
		Q^n(0)
		-
		\hat{R}^n_\alpha(t)
		-
		\hat{S}^n (t),
\end{equation}
\begin{equation} \label{eq:mod:arrivals^n}
	A^n(t) = \sup \left \{j \ge 0: \sum_{i=1}^j u_i/\lambda^n \le t \right \}
\end{equation}
\begin{equation} \label{eq:mod:balking^n}
	B^n_\alpha(t)
	=
		\sum_{i=1}^{A^n(t)} 1(b_i \le Q^n_\alpha(t^n_i-) / \mu^n), \quad \alpha \in \{S,T\},
\end{equation}
\begin{equation} \label{eq:mod:renegingT^n}
	R^n_T(t)
	=
		\sum_{i=1}^{A^n(t)}
			1(b_i > Q^n_T(t_i-) / \mu^n)
			\cdot
			1(d_i \le W^n_T(t_i-))
			\cdot
			1(W^n_T(t_i-) \le t - t_i), 
\end{equation}
\begin{equation} \label{eq:mod:renegingHatT^n}
	\hat{R}^n_T(t)
	=
		\sum_{i=1}^{Q^n(0)}
			1(\hat{d}_i \le \hat{w}^n_{i-1})
			\cdot
			1(\hat{w}^n_{i-1} \le t ), 
\end{equation}
\begin{equation} \label{eq:mod:renegingS^n}
	R^n_S(t)
	=
		\sum_{i=1}^{A^n(t)}
			1(b_i > Q^n_S(t_i-) / \mu^n)
			\cdot
			1(d_i \le \min(W^n_S(t_i-), t - t_i)), 
\end{equation}
\begin{equation} \label{eq:mod:renegingHatS^n}
	\hat{R}^n_S(t)
	=
		\sum_{i=1}^{Q^n(0)}
			1(\hat{d}_i \le \min(\hat{w}^n_{i-1}, t)), 
\end{equation}
\begin{equation} \label{eq:mod:busytime^n}
	T^n_\alpha(t) = \int_0^t 1(Q^n_\alpha(s) > 0) ds, \quad \alpha \in \{S,T\},
\end{equation}
\begin{equation} \label{eq:mod:workload^n}
	W^n_\alpha(t)
	=
		W^n(0)
		-
		T^n_\alpha(t)
		+
		\sum_{i=1}^{A^n(t)}
			(v_i/ \mu^n)
			\cdot
			1(b_i > Q^n_\alpha(t_i-)/ \mu^n)
			\cdot
			1(d_i > W^n_\alpha(t_i-)), \quad \alpha \in \{S,T\},
\end{equation}
and
\begin{equation} \label{eq:mod:idletime^n}
	I^n_\alpha(t) = t - T^n_\alpha(t), \quad \alpha \in \{S,T\},
\end{equation}
where
$$
	t^n_i = (1/\lambda^n) \sum_{j=1}^i u_j,
$$
$$ 
	W^n(0) = \hat{w}^n_{Q^n(0)},
	\qquad 
	\hat{w}^n_0 = 0,
	\qquad
	\mbox{and} \qquad
	\hat{w}^n_i
	=
	\sum_{j=1}^i
	\frac{\hat{v}_j}{\mu^n} \cdot 1(\hat{d}_j > \hat{w}^n_{j-1}), \qquad i \ge 1.
$$
The associated scaled processes are $Q^n_\alpha = \{Q^n_\alpha(t), t \ge 0\},$ 
	$\hat{Q}^n_\alpha = \{\hat{Q}^n_\alpha(t), t \ge 0\},$ 
	$A^n = \{A^n(t), t \ge 0\},$ $B^n_\alpha = \{B^n_\alpha(t), t \ge 0\},$ 
	$R^n_\alpha = \{R^n_\alpha(t), t \ge 0\},$
	$\hat{R}^n_\alpha = \{\hat{R}^n_\alpha(t), t \ge 0\},$
	$S^n_\alpha = \{S^n_\alpha(t), t \ge 0\},$ 
	$\hat{S}^n_\alpha = \{\hat{S}^n_\alpha(t), t \ge 0\},$ 
	$T^n_\alpha = \{T^n_\alpha(t), t \ge 0\},$ 
	$I^n_\alpha = \{I^n_\alpha(t), t \ge 0\}$, and $W^n_\alpha = \{W^n_\alpha(t), t \ge 0\}.$

We envision the arrival and service rates each being order $n$ and differing by a quantity that is  order $\sqrt{n}$.
So, in the absence of abandonment and balking, one would expect the queue length to be order $\sqrt{n}$
	and for the workload process to be order $1/\sqrt{n}$.
In fact, this intuition is  true in the presence of both balking and abandonment.
Hence, we define for each $\alpha \in \{S,T\}$,
	the diffusion scaled queue length process $\tilde{Q}^n_\alpha = \{\tilde{Q}^n_\alpha(t), t \ge 0\}$
	and the inflated workload process $\tilde{W}^n_\alpha = \{\tilde{W}^n_\alpha(t), t \ge 0\}$,
	 where for each $t \ge 0$,
$$
	\tilde{Q}^n_\alpha(t) = \frac{Q^n_\alpha(t)}{\sqrt{n}} 
		\qquad \mbox{ and } \qquad
	\tilde{W}^n_\alpha(t) = \sqrt{n} W^n_\alpha(t).
$$
Notice that we do not scale time as the arrival rates and service rates are already proportional to $n$.
Also notice that the balking and abandonment times do not change with $n$.
The reasoning is that demand may change and service speed must adjust accordingly,
	however, individuals will still have the same desires for and assessment of service quality.

We introduce the processes $\epsilon^n_\alpha = \{\epsilon^n_\alpha(t), t \ge 0\}$ for each $\alpha \in \{S,T\}$, where for each $t \ge0$,
\begin{equation} \label{eq:epslion^n}
	\epsilon^n_\alpha(t)
	=
		\hat{R}^n_\alpha(t) 
		+
		\left(
		R^n_\alpha(t)
		-
		\sum_{i=1}^{A^n(t)}
			1(d_i \le Q^n_\alpha(T^n_i-)/\mu^n)
		\right).
\end{equation}
The idea is to replace the reneging process $R^n_\alpha$ with a process that ignores whether the job has balked when considering whether it will renege.
In reality, a balking customer leaves and, as a result, the question of whether balking customers would renege is a moot one.
The introduction of this process also eliminates the concern of when the reneging customer causes a decrease in the queue length.  Here we assume that the customer never actually enters the queue.
One further subtly is that the workload is replaced by the weighted queue length -- the quantity used to determine balking -- so that one need only track the process $Q^n_\alpha$ rather than the joint process $(Q^n_\alpha,W^n_\alpha)$.
Lastly, a benefit of this formulation is that it allows for the ticket and standard queues to be handled simultaneously.
Ultimately, we will show that the process $\epsilon^n_\alpha$ is negligible under diffusion scaling, which partially argues why the diffusion-scaled ticket and standard queues converge together to the same limit.
The process $\epsilon^n_\alpha(\cdot)$  also eliminates the reneging of the initial jobs altogether;
	the reneging of initial jobs is shown to be neglibile in Proposition \ref{prop:noInitialReneging}.

The process $S^n_\alpha$ tracks the number of customers (who arrive after time 0) 
	served as a function of total effort dedicated to customers.
As not all customers receive service, 
	the service times that determine $S^n_\alpha$ are a subset of $\{v_i, i \ge 1\}$,
	the collection of potential service times of arriving jobs.
This subset differs for the ticket queue and the standard queue.
The index of the $i_{th}$ job whose service time contributes to $S^n_\alpha$ -- that is, who is actually served -- is
$$
	j^n_\alpha(i)
	=
	\inf
	\left \{k \ge 1: \sum_{\ell=1}^k
		1(b_i > Q^n_\alpha(t^n_\ell-) / \mu^n)
		\cdot
		1(d_i > W^n_\alpha(t^n_\ell-) \ge  i \right \} , \quad i \ge 1, \quad \alpha \in\{S,T\}.
$$
The sequence of service times that are actually used is denoted $\{v^n_\alpha(i), i \ge 1\}$, 
	where $v^n_\alpha(i) = v^n_{j^n_\alpha(i)}$.
Because the service time of job $j^n_\alpha(i)$ is independent of all random variables that dictate whether this service time is used,
	the filtered sequence $\{v^n_\alpha(i), i \ge 1\}$ is i.i.d. 
	and has the same distribution as the original (unfiltered) collection of service times.
Therefore, any property of the unfiltered service times -- such as weak laws of large numbers or invariance principles --
	holds for the filtered sequence.
Finally, we can write $S^n_\alpha$(t) for each $t \ge 0$ as
\begin{equation} \label{eq:S}
	S^n_\alpha(t)
	=
		\sup \left\{k \ge 0: (1/ \mu^n) \sum_{i=1}^k v^n_\alpha(i) \le t\right\}, \quad \alpha \in \{S,T\}.
\end{equation}

The processes $\hat{S} = \{\hat{S}(t), t \ge0\}$ and $\hat{S}^n = \{\hat{S}^n(t), t \ge0\}$ are also defined analogously.  

Now we can write the diffusion-scaled queue length processes for each $t \ge0$ as
\begin{eqnarray}\nonumber
	\tilde{Q}^n_\alpha(t)
	&=&
		\tilde{Q}^n(0)
		+
		\tilde{A}^n(t)
		-
		\tilde{M}^n_{b,\alpha}(\bar{A}^n(t))
		-
		\tilde{M}^n_{d,\alpha}(\bar{A}^n(t))
		-
		\tilde{\epsilon}^n_\alpha(t)
		-
		\tilde{\delta}^n_\alpha(t)
		-
		\tilde{S}^n_\alpha( (T^n_\alpha(t) - W^n(0))^+) \\
	&&   - \label{eq:mod:queue^tilde}
		\theta \int_0^t \tilde{Q}_\alpha^n(s) ds
		+
		\frac{(\lambda^n - \mu^n)}{\sqrt{n}}  t
		+
		\tilde{Y}^n_\alpha(t), \quad \alpha \in \{S,T\},
\end{eqnarray}
where, for each $\alpha \in \{S,T\},$ $\tilde{Q}_\alpha^n(0) = (1/\sqrt{n}) Q^n(0)$ is the scaled initial queue length 
	and, for each $t \ge 0$,
\begin{equation} \label{eq:mod:arrival^tilde}
	\tilde{A}^n(t) = (1/\sqrt{n}) \left( A^n(t) - \lambda^n t \right),
\end{equation}
\begin{equation} \label{eq:mod:arrival^bar}
	\bar{A}^n(t) = (1/n) A^n(t),
\end{equation}
\begin{equation} \label{eq:mod:balking^tilde}
	\tilde{M}^n_{b,\alpha}(t)
		=
		(1/\sqrt{n}) \sum_{i=1}^{\lfloor nt \rfloor}
			\left(
			1(b_i \le Q^n_\alpha(t^n_i-)/\mu^n)
			-
			F_b(\sqrt{n} \tilde{Q}^n_\alpha(t^n_i-)/\mu^n)
			\right),
\end{equation}
\begin{equation} \label{eq:mod:reneging^tilde}
	\tilde{M}^n_{d,\alpha}(t)
		=
		(1/\sqrt{n}) \sum_{i=1}^{\lfloor nt \rfloor}
			\left(
			1(d_i \le Q^n_\alpha(t^n_i-)/\mu^n)
			-
			F_d(\sqrt{n} \tilde{Q}^n_\alpha(t^n_i-)/\mu^n)
			\right),
\end{equation}
\begin{equation} \label{eq:mod:departure^tilde}
	\tilde{S}^n_\alpha(t) = (1/\sqrt{n}) \left( S^n_\alpha(t) - \mu^n t \right),
\end{equation}
\begin{equation} \label{eq:mod:epsilon^epsilon}
	\tilde{\epsilon}^n_\alpha = (1/\sqrt{n}) \epsilon^n_\alpha(t),
\end{equation}
\begin{eqnarray} \label{eq:mod:delta^tilde}
	\tilde{\delta}^n_\alpha(t)
	&=&
	\frac{1}{\sqrt{n}} \sum_{i=1}^{A^n(t)}
		\left(
		F_b(\sqrt{n} \tilde{Q} ^n_\alpha(t^n_i-)/\mu^n)
		+
		F_d(\sqrt{n} \tilde{Q}^n_\alpha(t^n_i-)/\mu^n)
	\right)
	- \theta \int_0^t \tilde{Q}^n_\alpha(s) ds
	\\ &&+ \nonumber
	\frac{1}{\sqrt{n}} \left(
		\hat{S}^n(t) - \mu^n \min(t,W^n(0))
	\right),
\end{eqnarray}
and
\begin{equation} \label{eq:mod:idleTime^tilde}
	\tilde{Y}^n_\alpha(t)
	=
	\left( \frac{\mu^n}{n} \right)
		\tilde{I}^n_\alpha(t)
	=
	\left( \frac{\mu^n}{\sqrt{n}} \right)
		 I^n_\alpha(t).
\end{equation}
We refer to $\tilde{A}^n = \{\tilde{A}^n(t), t \ge 0\}$ as the diffusion-scaled arrival process and to  $\bar{A}^n = \{\bar{A}^n(t), t \ge 0\}$ as its fluid-scaled analog.
The reader may notice that the  process 
	$\tilde{\delta}^n_\alpha = \{\tilde{\delta}^n_\alpha(t), t \ge 0\}$
	has what looks like an instantaneous drift that is proportionate to the value of the scaled queue length process.
The remaining processes are centered and diffusion scaled versions of their original analogs:
	$\tilde{M}^n_{b,\alpha} = \{\tilde{M}^n_{b,\alpha}(t), t \ge 0\}$,
	$\tilde{M}^n_{d,\alpha} = \{\tilde{M}^n_{d,\alpha}(t), t \ge 0\}$,
	$\tilde{S}^n_\alpha = \{\tilde{S}^n_\alpha(t), t \ge 0\}$,
	$\tilde{\epsilon}^n_\alpha = \{\tilde{\epsilon}^n_\alpha(t), t \ge 0\}$,
	and $\tilde{Y}^n_\alpha = \{Y^n_\alpha(t), t \ge 0\}$.

\subsection{A heavy traffic limit theorem}

In order to prove a heavy traffic limit theorem, we assume for our sequence of systems indexed by $n$, that arrival and service rates are order $n$  quantities and are asymptotically identical; that is, as $n \to \infty$,
\begin{equation} \label{eq:limiting rates}
	\lambda^n/n \to \mu
	\quad \mbox{ and }
	\quad
	\mu^n/n \to \mu.
\end{equation}
Further, the difference between the two should be an order $\sqrt{n}$ quantity such that as we take the limit $n \to \infty$,
\begin{equation} \label{eq:limiting beta}
	(\lambda^n - \mu^n)/ \sqrt{n} = \beta^n \to \beta \in (-\infty,\infty).
\end{equation}
One can refer to \eqref{eq:limiting rates} as the heavy traffic condition; the expression implies that
\begin{equation} \label{eq:heavyTraffic}
	\rho^n  = \lambda^n/\mu^n
	\to
	1,
\end{equation}
as $n \to \infty$.
We assume that the random variables associated with balking and abandonment are unaffected by the change in the index $n$.
Define
\begin{equation} \label{eq:sigma}
	\sigma
	\equiv
	\mu
	\sqrt{ \sigma^2_a
		+
		\sigma^2_s}
\end{equation}
as the standard deviation associated with the arrival and service times.
Lastly, define $B = \{B(t), t \ge 0\}$ as a Brownian motion with no drift and an infinitesimal variance of 1.

The framework developed in \cite{ref:RW} justifies the alternative representation of \eqref{eq:mod:queue^tilde}:
\begin{equation} \label{eq:queueIdle}
	(\tilde{Q}^n_\alpha, \tilde{Y}^n_\alpha)
	=
	(\Phi_\theta, \Psi_\theta)(\tilde{Q}^n(0) + \tilde{X}^n_\alpha),
\end{equation}
where $(\Phi_\theta, \Psi_\theta): \bD(\Rp,\R) \mapsto \bD(\Rp,\Rp^+)$ is a 
	Lipshitz continuous map,
	 $\tilde{X}^n_\alpha = \{\tilde{X}^n_\alpha(t), t \ge 0\}$, and for each $t \ge 0$ and $\alpha \in \{S,T\},$
\begin{equation} \label{eq:mod:X^tilde}
	\tilde{X}^n_\alpha(t)
	=
		\tilde{A}^n(t)
		-
		\tilde{M}^n_{b,\alpha}(\bar{A}^n(t))
		-
		\tilde{M}^n_{d,\alpha}(\bar{A}^n(t))
		-
		\tilde{\epsilon}^n_\alpha(t)
		-
		\tilde{\delta}^n_\alpha(t)
		-
		\tilde{S}^n_\alpha(T^n_\alpha(t))
		+
		\frac{(\lambda^n - \mu^n)}{\sqrt{n}}  t.
\end{equation}
The elements of $\tilde{X}^n$ are those that either will converge to Brownian motions or that are 
	asymptotically negligible.
The limiting stochastic process is the following
\begin{equation} \label{eq:tildeX}
	\tilde{X} = \beta e + \sigma B.
\end{equation}

We now present our main result for the diffusion scaled queue length and workload processes.
\begin{theorem} \label{thm:main}
	If 
\begin{equation} \label{eq:thm:main}
	(\tilde{Q}^n(0), \tilde{W}^n(0)) \Rightarrow (\tilde{Q}_0, \tilde{Q}_0/\mu), \qquad \mbox{as } n \to \infty,
\end{equation}
then 
\begin{equation} \label{eq:thm:main2}
		((\tilde{Q}^n_S, \tilde{W}^n_S, \tilde{Y}^n_S),(\tilde{Q}^n_T, \tilde{W}^n_T, \tilde{Y}^n_T))
			 \Rightarrow
		((\tilde{Q}, \tilde{Q}/\mu, \tilde{Y}),(\tilde{Q}, \tilde{Q}/\mu, \tilde{Y})),
	\qquad \mbox{as } n \to \infty,
\end{equation}
	where $\tilde{Q}(0)$ is equal in distribution to $\tilde{Q}_0$,
	$\tilde{Q} = \Phi_\theta(\tilde{Q}(0) + \tilde{X})$, 
	$\tilde{Y} = \Psi_\theta(\tilde{Q}(0) + \tilde{X})$,
	and together $\tilde{Q}$ and $\tilde{Y}$  obey the following stochastic differential equation
	\begin{equation} \label{eq:SDE}
		d \tilde{Q} (t)
		=
		-\theta (\beta/\theta - \tilde{Q}(t)) dt
		+
		\sigma d B(t)
		+
		d \tilde{Y}(t).
	\end{equation}
\end{theorem}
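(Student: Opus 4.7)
The starting point is the representation \eqref{eq:queueIdle}, which writes both $(\tilde{Q}^n_\alpha, \tilde{Y}^n_\alpha)$ as the image of $\tilde{Q}^n(0) + \tilde{X}^n_\alpha$ under the Lipschitz-continuous reflection map $(\Phi_\theta, \Psi_\theta)$. By the continuous mapping theorem, the problem reduces to establishing the joint convergence $(\tilde{X}^n_S, \tilde{X}^n_T) \Rightarrow (\tilde{X}, \tilde{X})$ \emph{to the same limit}; this coupling is precisely what yields the two-dimensional conclusion \eqref{eq:thm:main2} with identical marginals on both coordinates. The workload piece $\tilde{W}^n_\alpha \Rightarrow \tilde{Q}/\mu$ follows because $\tilde{W}^n_\alpha - \tilde{Q}^n_\alpha/\mu = o_p(1)$ on compacts, which is a parallel diffusion-scale argument: both quantities live on the $1/\sqrt{n}$ scale and differ only through martingale-difference terms that vanish in the same way the ones below do.

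Next I would dispatch the terms in \eqref{eq:mod:X^tilde} one by one. The deterministic drift $(\lambda^n - \mu^n)t/\sqrt{n} \to \beta t$ by \eqref{eq:limiting beta}. The renewal FCLT gives $\tilde{A}^n \Rightarrow$ Brownian motion and $\tilde{S}^n_\alpha \Rightarrow$ Brownian motion independently; the filtered-service-time identity noted just before \eqref{eq:S} shows that both $\alpha \in \{S, T\}$ are driven by the same underlying sequence $\{v_i\}$, delivering the coupling at the renewal level. A random time change, justified by $T^n_\alpha(t) \to t$ uniformly on compacts (the idle time is negligible under heavy traffic), shows that $\tilde{S}^n_\alpha(T^n_\alpha(\cdot))$ has the same Brownian limit as $\tilde{S}^n_\alpha$, and combining arrival and service variances produces the $\sigma$ of \eqref{eq:sigma}. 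The martingale-difference terms $\tilde{M}^n_{b,\alpha}$ and $\tilde{M}^n_{d,\alpha}$ vanish by a quadratic-variation bound: each summand has conditional variance of order $F_b(O(1/\sqrt{n})) = O(1/\sqrt{n})$ because the distributions vanish at zero, and accumulating $O(n)$ such terms divided by $n$ yields total quadratic variation $O(1/\sqrt{n})$, so Doob's inequality closes it. The drift-like term $\tilde{\delta}^n_\alpha$ is handled by Taylor-expanding $F_b$ and $F_d$ at zero: the linear pieces produce $\theta \int_0^t \tilde{Q}^n_\alpha(s)\,ds$ after invoking $\lambda^n/\mu^n \to 1$ and piecewise constancy of $\tilde{Q}^n_\alpha$, canceling the subtracted integral up to $o_p(1)$; the initial-condition residual $\mu^n\min(t,W^n(0)) - \hat{S}^n(t)$ vanishes via an FCLT for $\hat{S}^n$ together with the negligibility of initial reneging anticipated in the paper.

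The principal obstacle — and the only place where the ticket vs.\ standard distinction enters — is showing $\tilde{\epsilon}^n_\alpha \Rightarrow 0$. Recall that $\epsilon^n_\alpha$ replaces the true reneging process $R^n_\alpha$ by an idealization that (i) ignores the balking filter when testing for reneging, (ii) tests against $Q^n_\alpha/\mu^n$ instead of the actual workload $W^n_\alpha$, and (for $\alpha = T$) (iii) ignores the timing lag $1(W^n_T(t_i-) \le t - t_i)$. Discrepancy (i) is a joint balking-and-reneging event with per-arrival probability $O(1/n)$, giving $O(1)$ total events across $O(n)$ arrivals, hence $o_p(1)$ after dividing by $\sqrt{n}$. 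Discrepancy (ii) is controlled by $|W^n_\alpha - Q^n_\alpha/\mu^n| = o_p(1/\sqrt{n})$, again producing $o_p(1)$ diffusion-scaled error. For (iii), which is the most delicate, one couples $R^n_T$ to its no-lag counterpart and bounds the difference by the number of would-be reneging arrivals in a window of length $O_p(1/\sqrt{n})$; here a maximal inequality uniform in $t$ is needed, since the lag window is itself time-dependent and random. Putting everything together, $\tilde{X}^n_S - \tilde{X}^n_T \Rightarrow 0$ and $\tilde{X}^n_S \Rightarrow \tilde{X}$, so applying $(\Phi_\theta, \Psi_\theta)$ delivers \eqref{eq:thm:main2}, and the SDE \eqref{eq:SDE} is then identified by reading off the drift $-\theta(\tilde{Q} - \beta/\theta)$ absorbed by the $\theta\int_0^t\tilde{Q}(s)ds$ term and the diffusion coefficient $\sigma$ from the Brownian limit.
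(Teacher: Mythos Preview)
Your overall architecture matches the paper's: write $(\tilde{Q}^n_\alpha,\tilde{Y}^n_\alpha)=(\Phi_\theta,\Psi_\theta)(\tilde{Q}^n(0)+\tilde{X}^n_\alpha)$, show each piece of $\tilde{X}^n_\alpha$ converges (drift, renewal FCLTs, vanishing martingale differences $\tilde{M}^n_{b,\alpha},\tilde{M}^n_{d,\alpha}$, vanishing $\tilde{\epsilon}^n_\alpha$ and $\tilde{\delta}^n_\alpha$), invoke the continuous mapping theorem, and use state space collapse for the workload. Your three-way decomposition of $\tilde{\epsilon}^n_\alpha$ also mirrors the paper's Propositions~\ref{prop:R2R}--\ref{prop:R0R1}.

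The genuine gap is in how you obtain the \emph{joint} convergence $((\tilde{Q}^n_S,\ldots),(\tilde{Q}^n_T,\ldots))\Rightarrow((\tilde{Q},\ldots),(\tilde{Q},\ldots))$. You assert that the filtered-service-time construction ``deliver[s] the coupling at the renewal level'' because both $\alpha\in\{S,T\}$ draw from the same underlying sequence $\{v_i\}$, and then conclude $\tilde{X}^n_S-\tilde{X}^n_T\Rightarrow 0$. But the filtered sequences $\{v^n_S(i)\}$ and $\{v^n_T(i)\}$ are \emph{different} subsequences of $\{v_i\}$: which jobs are served depends on $Q^n_\alpha$ and $W^n_\alpha$, which differ between the two systems. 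The paper's remark before \eqref{eq:S} establishes only that each filtered sequence is i.i.d.\ with the original service-time law, giving marginal convergence $\tilde{S}^n_\alpha\Rightarrow \mu\sigma_s B_s$ for each $\alpha$; it does not give $\tilde{S}^n_S\circ T^n_S-\tilde{S}^n_T\circ T^n_T\to 0$ pathwise. Attempting to prove the latter directly is circular, since the discrepancy between the filters is governed by the very queue-length and workload differences you are trying to bound.

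The paper breaks this circularity by proving the coupling \emph{separately}, at the level of the workload processes, via Theorem~\ref{thm:asymptoticCoupling}. That argument is purely dynamical: if $\tilde{W}^n_T>\tilde{W}^n_S$ over an interval, any arrival that abandons the standard queue must also abandon the ticket queue, so the gap can increase only through differential balking, which would force $\tilde{Q}^n_T<\tilde{Q}^n_S$ and contradict state space collapse (Proposition~\ref{prop:queueWorkload}). This sample-path argument is logically prior to, and independent of, the convergence $\tilde{X}^n_\alpha\Rightarrow\tilde{X}$; the main theorem's proof first obtains marginal convergence of $(\tilde{Q}^n_\alpha,\tilde{Y}^n_\alpha)$ for each $\alpha$ and only then invokes Theorem~\ref{thm:asymptoticCoupling} to upgrade to joint convergence. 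Your proposal is missing this ingredient.
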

\begin{remark}
	The process $\tilde{Q}$ is referred to as an ROU process.
	The steady state distribution of $\tilde{Q}$ is a truncated (at zero) normal variable,
	\begin{equation} \label{eq:ROU_steadyState}
		\tilde{Q}(\infty)
		=
		Normal \left(\frac{\beta}{\theta},\frac{\sigma^2}{2 \theta},0,\infty \right),
	\end{equation}
	whose mean is
	\begin{equation} \label{eq:ROU_mean}
		E[\tilde{Q}(\infty)]
		=
		\frac{\beta}{\theta}
			+
			\frac{\sigma}{\sqrt{2 \theta}}
			h\left(-\frac{\beta}{\sigma \sqrt{\theta/2}} \right).
	\end{equation}
	where the hazard function $h(\cdot)$ is defined as the ratio of the density and the tail of the standard
	normal distribution:
	\begin{equation} \label{eq:Hazard_rate}
		h\left(x\right)
		=
			\frac{\varphi(x)}{1-\Phi(x)}, 
		\qquad \mbox{for all } x \in \R.
	\end{equation}
\end{remark}

\subsection{Preliminaries}
We conclude this section with several results that are well known in the heavy traffic literature.
As such, we do not provide proofs.
The lemmas are all similar in substance to those in Lemma 3.1 of \cite{ref:J}.
\begin{lemma} {Bounded total arrivals.}  \label{lem:boundedArrivals}
	For any $t \ge 0$,
	$$
		\lim_{n \to \infty}
		 \Prob \left(A^n(t) > 2 \mu n t \right) =0.
	$$
\end{lemma}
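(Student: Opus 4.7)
The plan is to rewrite the event $\{A^n(t) > 2\mu n t\}$ in terms of the partial sums of the i.i.d.\ unit-mean sequence $\{u_i\}$ and then combine the weak law of large numbers with the heavy-traffic scaling \eqref{eq:limiting rates} for $\lambda^n$. From the definition \eqref{eq:mod:arrivals^n} of $A^n$, and because the partial sums $\sum_{i=1}^j u_i/\lambda^n$ are nondecreasing in $j$, one has $\{A^n(t) \ge k\} = \{\sum_{i=1}^{k} u_i \le \lambda^n t\}$ for each integer $k \ge 1$. Setting $k_n = \lceil 2\mu n t\rceil$, which satisfies $k_n \ge 2\mu n t$, this identity gives
$$
\Prob\bigl(A^n(t) > 2\mu n t\bigr)
\;\le\;
\Prob\bigl(A^n(t) \ge k_n\bigr)
\;=\;
\Prob\!\left(\frac{1}{k_n}\sum_{i=1}^{k_n} u_i \;\le\; \frac{\lambda^n t}{k_n}\right).
$$

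First I would use the heavy-traffic assumption $\lambda^n/n \to \mu$ to choose $n$ so large that $\lambda^n \le (3/2)\mu n$; then $\lambda^n t/k_n \le (3/2)\mu n t/(2\mu n t) = 3/4$, and therefore the right-hand probability is bounded above by $\Prob\bigl(\tfrac{1}{k_n}\sum_{i=1}^{k_n} u_i \le 3/4\bigr)$ for all large $n$. Next, since $k_n \to \infty$ as $n \to \infty$ and $\Exp[u_1] = 1$, the weak law of large numbers applied to the i.i.d.\ sequence $\{u_i\}$ forces $\tfrac{1}{k_n}\sum_{i=1}^{k_n} u_i \to 1$ in probability, so the right-hand probability tends to zero. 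Chaining the two bounds yields $\Prob(A^n(t) > 2\mu n t) \to 0$, which is the claim.

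There is no substantive obstacle here; the only point requiring care is to ensure that the ``target'' $\lambda^n t$ lies strictly below the mean $k_n = \Exp[\sum_{i=1}^{k_n} u_i]$ uniformly in large $n$, which is guaranteed by the factor-of-two gap between $\lambda^n t \sim \mu n t$ and $k_n \sim 2\mu n t$. In particular the conclusion relies only on the i.i.d.\ unit-mean structure of $\{u_i\}$ posited in Section \ref{sec:model} together with \eqref{eq:limiting rates}; no further information about the interarrival variance $\sigma_a^2$ is needed, and the proof extends verbatim to any fixed multiplier strictly greater than $1$ in place of $2$.
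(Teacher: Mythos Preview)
Your proof is correct. The paper does not actually prove this lemma, remarking only that it and the neighboring preliminaries are ``well known in the heavy traffic literature''; your reduction to the weak law of large numbers via the renewal identity $\{A^n(t)\ge k\}=\{\sum_{i=1}^k u_i\le\lambda^n t\}$ is exactly the standard route one would expect, and your handling of the constants is clean.
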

\begin{lemma} {Bounded maximum service time.} \label{lem:boundedMaxService}
	For any $\epsilon, K, t \ge 0$,
	$$
		\lim_{n \to \infty}
		\Prob \left(\sup_{i \le  K n t}
		v^n_i > \epsilon / \sqrt{n} \right) 
		=
		\lim_{n \to \infty}
		\Prob \left(\sup_{i \le  K n t}
		\frac{\hat{v}_i}{\mu^n} > \epsilon / \sqrt{n} \right) = 0.
	$$
\end{lemma}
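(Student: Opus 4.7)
The plan is to reduce the supremum probability to a union bound over i.i.d.\ tail probabilities and then exploit the finite-variance hypothesis together with the heavy-traffic scaling of $\mu^n$. Reading $v^n_i$ as the actual service time $v_i/\mu^n$ (in parallel with the $\hat v_i/\mu^n$ appearing in the second equality), the event $\{v^n_i > \epsilon/\sqrt n\}$ coincides with $\{v_i > \epsilon \mu^n/\sqrt n\}$; since $\mu^n/n \to \mu > 0$, the threshold $\epsilon \mu^n/\sqrt n$ is asymptotic to $\epsilon \mu \sqrt n$ and tends to infinity.

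Next I would apply a union bound and use the i.i.d.\ property of $\{v_i\}$ to obtain
\begin{equation*}
    \Prob \left( \sup_{i \le Knt} v^n_i > \epsilon / \sqrt n \right)
    \le \lfloor Knt \rfloor \cdot \Prob \left( v_1 > \epsilon \mu^n / \sqrt n \right).
\end{equation*}
The critical ingredient is that the finite second moment $E[v_1^2] = 1 + \sigma_s^2 < \infty$ yields the strict little-$o$ tail decay
\begin{equation*}
    x^2 \, \Prob(v_1 > x) \to 0 \quad \mbox{as } x \to \infty,
\end{equation*}
which follows from dominated convergence applied to $v_1^2 \cdot 1(v_1 > x)$. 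Writing $\Prob(v_1 > x) = \alpha(x)/x^2$ with $\alpha(x) \to 0$ and substituting $x = \epsilon \mu^n/\sqrt n$, the right-hand side of the union bound becomes
\begin{equation*}
    \lfloor Knt \rfloor \cdot \frac{\alpha(\epsilon \mu^n/\sqrt n) \cdot n}{\epsilon^2 (\mu^n)^2}
    = \alpha(\epsilon \mu^n/\sqrt n) \cdot \frac{Kt}{\epsilon^2} \cdot \left( \frac{n}{\mu^n} \right)^2 (1 + o(1)),
\end{equation*}
which tends to zero because $\alpha(\cdot) \to 0$ and $(n/\mu^n)^2 \to 1/\mu^2$.

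The identical argument handles $\hat v_i/\mu^n$, since the $\hat v_i$'s are assumed i.i.d.\ with the same distribution as $v_1$. The main (very mild) obstacle is recognizing that Chebyshev's inequality alone delivers only $\Prob(v_1 > x) \le E[v_1^2]/x^2$, which produces a union-bound total of order $Knt/n = O(1)$ and does not close the argument; it is the strict $o(1/x^2)$ improvement afforded by finiteness of the second moment that is needed. Beyond this, no further estimates are required, and the result is essentially a standard maximum-of-i.i.d.\ tail bound.
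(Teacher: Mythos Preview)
Your argument is correct and is precisely the standard one for this type of maximum-of-i.i.d.\ bound. The paper does not actually supply a proof of this lemma: it lists Lemmas~\ref{lem:boundedArrivals}--\ref{lem:serviceTimeFLLN} as ``well known in the heavy traffic literature'' and refers the reader to Lemma~3.1 of \cite{ref:J} rather than arguing them directly. Your union bound together with the $o(x^{-2})$ tail estimate obtained from $E[v_1^2]<\infty$ via dominated convergence is exactly the textbook route, and your remark that Chebyshev alone yields only an $O(1)$ bound (and hence is insufficient) is the right diagnosis of where the little-$o$ improvement is essential.
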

\begin{lemma}{Functional law of large numbers for initial service times} \label{lem:serviceTimeFLLN}
	For any $\epsilon, b> 0$,
	$$
		\limsup_{n \to \infty}
		\Prob\left(
			\sup_{j,k \le b\sqrt{n}}
			\sqrt{n} \left|
				\sum_{i=j+1}^k
					\frac{\hat{v}_i}{\mu^n}
				-
				\frac{(k-j)}{\mu n}
			\right|
			> \epsilon
		\right)
		=
		0.
	$$
\end{lemma}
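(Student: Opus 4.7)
The plan is to split the quantity inside the absolute value into a centered partial-sum fluctuation plus a deterministic mean-correction, and to control each piece separately with standard tools. Concretely, I would write
\[
  \sum_{i=j+1}^k \frac{\hat v_i}{\mu^n} - \frac{k-j}{\mu n}
  \;=\;
  \frac{1}{\mu^n}\sum_{i=j+1}^k (\hat v_i - 1)
  \;+\;
  (k-j)\!\left(\frac{1}{\mu^n} - \frac{1}{\mu n}\right).
\]
The second, deterministic piece is handled directly from \eqref{eq:limiting rates}: because $\mu^n/n \to \mu$, the bracketed factor is $o(1/n)$, so after multiplication by the maximum possible range $k-j \le b\sqrt n$ and then by the outer factor $\sqrt n$, the contribution is $o(1)$ uniformly in $j,k$. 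Thus for all sufficiently large $n$ this term cannot account for an exceedance of $\epsilon$ in the supremum.

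The work is in the random piece. Setting $M_i = \hat v_i - 1$ and $S_m = M_1 + \cdots + M_m$, the telescoping identity $\sum_{i=j+1}^k M_i = S_k - S_j$ collapses the double supremum:
\[
  \sup_{j,k \le b\sqrt n} \Bigl|\sum_{i=j+1}^k M_i\Bigr|
  \;\le\;
  2 \sup_{m \le b\sqrt n} |S_m|.
\]
Since $\mu^n \sim \mu n$, the relevant scaled quantity is $(\sqrt n/\mu^n)$ times this supremum, which is of order $n^{-1/2}$ times $\sup_m |S_m|$. Because the $M_i$ are i.i.d., mean zero, with variance $\sigma_s^2$ (the $\hat v_i$ inherit the variance of the generic service times), Kolmogorov's maximal inequality applied to the martingale $\{S_m\}$ gives
\[
  \Prob\!\left(\sup_{m \le b\sqrt n} |S_m| > c\right) \;\le\; \frac{b\sqrt n\,\sigma_s^2}{c^2}.
\]

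Choosing $c$ of order $\epsilon\sqrt n$ (for instance, $c = \epsilon \mu^n/(4\sqrt n)$, which is eventually at least $(\epsilon\mu/8)\sqrt n$) reduces the right-hand side to $O(n^{-1/2})$, which vanishes. A simple union bound combining this tail estimate with the deterministic bound from Step~1 yields the stated limit. I do not anticipate a serious obstacle: the only care required is the bookkeeping in the decomposition, forced by the fact that the centering $(k-j)/(\mu n)$ uses the limiting rate $\mu$ rather than $\mu^n$. Everything else is a textbook maximal inequality for partial sums of a finite-variance i.i.d. sequence, which is precisely why the authors state this lemma without proof.
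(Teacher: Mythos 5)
Your argument is correct: the decomposition into the centered partial sum plus the deterministic rate-mismatch term, the reduction of the double supremum via $|S_k-S_j|\le 2\sup_m|S_m|$, and Kolmogorov's maximal inequality with $c$ of order $\epsilon\sqrt{n}$ give exactly the stated limit, and the heavy traffic condition $\mu^n/n\to\mu$ is all that is needed for the deterministic piece. The paper itself offers no proof of this lemma (it is cited as a standard result akin to Lemma 3.1 of \cite{ref:J}), and your proof is precisely the standard maximal-inequality argument that citation stands in for, so there is nothing to reconcile.
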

Lemma \ref{lem:boundedArrivals} places an upper bound on the arrival process.
This bound allows us to replace the number of arrivals in an interval with a deterministic upper bound.
Likewise, Lemma \ref{lem:boundedMaxService} gives a uniform upper bound on service times that are of order $n$ in quantity. 
Used in conjunction with Lemma \ref{lem:boundedArrivals}, Lemma \ref{lem:boundedMaxService} places an upper bound
	on all service times during any finite interval of time.
Lemma \ref{lem:serviceTimeFLLN} places a bound on the amount by which the service times of initial jobs can differ from 
	their expected value.

The following four results pertain to the arrival of jobs and the arrival of potential work. 
The first result states that jobs arrive in a linear fashion.
\begin{lemma} {Uniformly bounded fluid arrivals.}  \label{lem:uniformArrivals}
	For any $\epsilon$ and $t > 0$,
	$$	\lim_{n \to \infty}
		\Prob \left(  \sup_{s \le t} \left|
			\bar A^n(s)
			- \mu s
			\right|
			 > \epsilon 
		\right) = 0.   	$$
\end{lemma}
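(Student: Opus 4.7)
The plan is to reduce the statement to the classical functional law of large numbers (FLLN) for renewal counting processes, combined with the rate convergence $\lambda^n/n \to \mu$ from \eqref{eq:limiting rates}. First I would note that from the definition \eqref{eq:mod:arrivals^n}, the arrival process admits the clean representation $A^n(s) = N(\lambda^n s)$, where
$$
N(t) \;=\; \sup\left\{k \ge 0 \,:\, \sum_{i=1}^k u_i \le t \right\}
$$
is the renewal counting process generated by the i.i.d.\ unit-mean sequence $\{u_i\}$, which does not depend on $n$. This gives the factorization
$$
\bar{A}^n(s) \;=\; \frac{A^n(s)}{n} \;=\; \frac{\lambda^n}{n}\cdot\frac{N(\lambda^n s)}{\lambda^n}.
$$

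Next I would invoke the standard FLLN for renewal processes, which states that $\sup_{s \le t}\left|N(cs)/c - s\right| \to 0$ almost surely as $c \to \infty$. Because \eqref{eq:limiting rates} forces $\lambda^n \to \infty$, this yields $\sup_{s \le t}|N(\lambda^n s)/\lambda^n - s| \to 0$ in probability. Combining this with the deterministic convergence $\lambda^n/n \to \mu$ and the triangle inequality
$$
\left|\bar{A}^n(s) - \mu s\right|
\;\le\;
\frac{\lambda^n}{n}\left|\frac{N(\lambda^n s)}{\lambda^n} - s\right|
\;+\;
\left|\frac{\lambda^n}{n} - \mu\right| s,
$$
taking the supremum over $s \le t$ on both sides and sending $n \to \infty$ delivers the lemma.

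The only real content — to the extent a result this standard has any — is the FLLN for $N$ itself. I would obtain it by applying the Kolmogorov strong law of large numbers to the partial sums $U(k) = \sum_{i=1}^k u_i$, converting the statement $U(k)/k \to 1$ a.s.\ into $N(t)/t \to 1$ a.s.\ via the sandwiching identity $U(N(t)) \le t < U(N(t)+1)$, and finally upgrading pointwise convergence to uniform convergence on $[0,t]$ using the monotonicity of $N$ together with the continuity of the deterministic limit $s \mapsto s$ (a Dini-type argument). Apart from this well-known step, everything else is routine, which is why the paper, like \cite{ref:J}, chooses to skip the proof entirely.
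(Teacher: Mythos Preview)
Your proposal is correct and follows the standard route: factor $\bar A^n(s) = (\lambda^n/n)\cdot N(\lambda^n s)/\lambda^n$, invoke the renewal FLLN for $N$, and combine with $\lambda^n/n \to \mu$ from \eqref{eq:limiting rates}. The paper does not prove this lemma at all---it is explicitly flagged as a well-known heavy traffic preliminary and deferred to Lemma~3.1 of \cite{ref:J}---so your argument is exactly the kind of justification the authors had in mind when they omitted it.
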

The second result, based on the heavy traffic condition, is a functional law of large numbers and 
	states that asymptotically, potential work arrives at rate 1, uniformly over compact intervals.
\begin{lemma}  {Uniformly bounded fluid potential workload.} \label{lem:uniformPotentialWork}
	For any $\epsilon$ and $t > 0$,
	$$
		\lim_{n \to \infty}
		\Prob \left(  \sup_{s \le t} \left|
			\sum_{i = 1}^{A^n(s)}
				v^n_i
			- s
			\right|
			 > \epsilon 
		\right) = 0.
	$$
\end{lemma}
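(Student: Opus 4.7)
The plan is to reduce this to a composition of a random time change (the arrival process) with a renewal-type partial sum, and then invoke the functional strong law of large numbers together with Lemma \ref{lem:uniformArrivals}. Since the $v^n_i$ are the actual service times $v_i/\mu^n$ and $\{v_i\}$ is i.i.d. with unit mean, the cumulative potential workload can be rewritten as
$$
\sum_{i=1}^{A^n(s)} v^n_i \;=\; \frac{1}{\mu^n}\sum_{i=1}^{A^n(s)} v_i.
$$
I would first handle the partial-sum process. Set $V^n(u) = (1/n)\sum_{i=1}^{\lfloor n u\rfloor} v_i$. By the functional strong law of large numbers for i.i.d.\ sequences with finite mean, $V^n \to e$ u.o.c., i.e.\ for each $L>0$ and $\eta>0$,
$$
\lim_{n\to\infty}\Prob\!\left(\sup_{u\le L}\bigl|V^n(u) - u\bigr| > \eta\right) = 0.
$$

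Next, I would feed in the fluid arrival result. By Lemma \ref{lem:uniformArrivals}, $\bar A^n \Rightarrow \mu e$ uniformly on $[0,t]$; in particular, for any $\eta>0$, with probability tending to one, $\sup_{s\le t} \bar A^n(s) \le \mu t + 1 =: L$. On this event,
$$
\sup_{s\le t}\left|\frac{1}{n}\sum_{i=1}^{A^n(s)} v_i - \bar A^n(s)\right|
\;=\; \sup_{s\le t}\bigl|V^n(\bar A^n(s)) - \bar A^n(s)\bigr|
\;\le\; \sup_{u\le L}\bigl|V^n(u) - u\bigr|,
$$
which is $o_{\Prob}(1)$ by the FSLLN above. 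Combining this with $\sup_{s\le t}|\bar A^n(s) - \mu s| \to 0$ in probability gives
$$
\sup_{s\le t}\left|\frac{1}{n}\sum_{i=1}^{A^n(s)} v_i - \mu s\right| \;\xrightarrow{\Prob}\; 0.
$$

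Finally, multiply through by $n/\mu^n$. The heavy traffic condition \eqref{eq:limiting rates} ensures $n/\mu^n \to 1/\mu$, so uniformly on $[0,t]$,
$$
\sum_{i=1}^{A^n(s)} v^n_i
\;=\; \frac{n}{\mu^n}\cdot\frac{1}{n}\sum_{i=1}^{A^n(s)} v_i
\;\xrightarrow{\Prob}\; \frac{1}{\mu}\cdot \mu s \;=\; s,
$$
which is precisely the claim. The only nonroutine step is the uniform control of the composition $V^n\circ \bar A^n$; this is handled cleanly by first showing $\bar A^n$ is eventually bounded by a deterministic constant on $[0,t]$ (consequence of Lemmas \ref{lem:boundedArrivals} and \ref{lem:uniformArrivals}) and then applying the FSLLN over that deterministic horizon, which sidesteps any continuous-mapping subtleties associated with random time changes.
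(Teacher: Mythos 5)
Your argument is correct. Note that the paper does not actually prove this lemma; it defers Lemmas \ref{lem:boundedArrivals}--\ref{lem:shortPotentialWork} to the heavy-traffic literature, citing Lemma 3.1 of \cite{ref:J}, so there is no in-paper proof to compare against. The route you take is the canonical one: a uniform functional strong law of large numbers for the rescaled partial sums $V^n(u) = n^{-1}\sum_{i\le \lfloor nu\rfloor} v_i$, Lemma \ref{lem:uniformArrivals} to control $\bar A^n$ on $[0,t]$, a pathwise bound for the composition $V^n\circ\bar A^n$, and the heavy-traffic condition \eqref{eq:limiting rates} to trade $1/n$ for $1/\mu^n$. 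Your decision to first confine $\bar A^n$ to a deterministic horizon $[0,L]$ (either via Lemma \ref{lem:boundedArrivals} or \ref{lem:uniformArrivals}) and then invoke the FSLLN over $[0,L]$ neatly sidesteps any continuous-mapping subtleties in the random time change, since the inequality $\sup_{s\le t}|V^n(\bar A^n(s))-\bar A^n(s)|\le \sup_{u\le L}|V^n(u)-u|$ holds pathwise on that event. The only point worth making explicit in the final step is the decomposition
\[
\sum_{i=1}^{A^n(s)} v^n_i - s
=
\frac{n}{\mu^n}\left(\frac{1}{n}\sum_{i=1}^{A^n(s)} v_i - \mu s\right)
+
\left(\frac{n\mu}{\mu^n} - 1\right) s,
\]
so that after taking the supremum over $s\le t$ the first term vanishes because $n/\mu^n$ is eventually bounded and the supremum factor is $o_{\Prob}(1)$, while the second is deterministic and tends to zero by \eqref{eq:limiting rates}; this is exactly what your argument accomplishes.
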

The above lemma is a key component for demonstrating that server idleness is asymptotically negligible; 
	see Proposition \ref{prop:allocationLimit}.
The lemma is also used for the proof of tightness of our sequence of scaled queueing processes; 
	see Proposition \ref{prop:tightness}.
However, we also need another version of the above lemma, but for short time intervals:
\begin{lemma} {Net potential workload tightness.}  \label{lem:shortPotentialWork}
	For any $\epsilon$ and $t > 0$, there exists a $\delta > 0$ such that
	$$
		\lim_{n \to \infty}
		\Prob \left(  \sup_{u<v \le t, v-u < \delta} \left|
			\sum_{i = A^n(u)+1}^{A^n(v)}
				v^n_i
			- (v-u)
			\right|
			 > \frac{\epsilon}{\sqrt{n}}
		\right) = 0.
	$$
\end{lemma}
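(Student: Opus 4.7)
The plan is to multiply through by $\sqrt n$ and establish the statement as a modulus-of-continuity (C-tightness) bound on the scaled net potential workload $\sqrt n\bigl(\sum_{i=1}^{A^n(\cdot)} v^n_i - (\cdot)\bigr)$. Using $v^n_i = v_i/\mu^n$, centering each $v_i$ about its unit mean, and inserting $\pm\lambda^n(v-u)$, I decompose
\begin{equation*}
\sqrt{n}\!\left(\sum_{i=A^n(u)+1}^{A^n(v)} v^n_i - (v-u)\right)
=
\frac{\sqrt{n}}{\mu^n}\sum_{i=A^n(u)+1}^{A^n(v)}(v_i-1)
+ \frac{n}{\mu^n}\bigl[\tilde A^n(v)-\tilde A^n(u)\bigr]
+ \frac{n\beta^n}{\mu^n}(v-u).
\end{equation*}
The goal is to bound each summand in absolute value by $\epsilon/3$ uniformly over $u<v\le t$ with $v-u<\delta$, on events of probability tending to one, for a suitably small $\delta$.

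The deterministic drift term is immediate: under the heavy-traffic assumptions $\beta^n\to\beta$ and $\mu^n/n\to\mu$, it is bounded by $(|\beta|/\mu+o(1))\delta$. The pure arrival modulus $\tilde A^n(v)-\tilde A^n(u)$ is controlled via the C-tightness of the diffusion-scaled arrival process, itself a consequence of the renewal functional CLT applied to the i.i.d.\ sequence $\{u_i\}$. For the service-side sum, I introduce the auxiliary partial-sum process $\tilde U^n(s):=(1/\sqrt n)\sum_{i=1}^{\lfloor ns\rfloor}(v_i-1)$, which is C-tight by Donsker's theorem with Brownian limit of variance $\sigma_s^2$. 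Rewriting
\begin{equation*}
\frac{\sqrt n}{\mu^n}\sum_{i=A^n(u)+1}^{A^n(v)}(v_i-1)
= \frac{n}{\mu^n}\bigl[\tilde U^n(\bar A^n(v))-\tilde U^n(\bar A^n(u))\bigr] + o_p(1),
\end{equation*}
where the floor-function discrepancy is absorbed into the error term and handled via Lemma \ref{lem:boundedMaxService}, I then invoke Lemma \ref{lem:uniformArrivals} to obtain that, on a high-probability event, $|\bar A^n(v)-\bar A^n(u)|\le \mu\delta + o(1)$ uniformly over $v-u<\delta$. Hence the service-side increment is controlled by the modulus of continuity of $\tilde U^n$ at scale $\mu\delta$, which is small for small $\delta$ by C-tightness.

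The main technical obstacle is the random time change in the service-side term: the centered partial sum is indexed by the random arrival counts $A^n(u), A^n(v)$ rather than by deterministic indices, so a naive application of Doob's inequality to the martingale $\sum(v_i-1)$ yields a modulus bound independent of $\delta$ after union-bounding over blocks. The resolution is to decouple the two sources of randomness via composition, exploiting the C-tightness of $\tilde U^n$ together with the uniform fluid convergence of $\bar A^n$ to the Lipschitz function $s\mapsto\mu s$ provided by Lemma \ref{lem:uniformArrivals}. Once this time-change step is established, choosing $\delta$ small enough that each of the three contributions is below $\epsilon/3$ on a common high-probability event completes the argument.
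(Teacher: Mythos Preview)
Your approach is correct and aligns with the paper's treatment. The paper does not actually prove this lemma; it is listed among the preliminary results that are ``well known in the heavy traffic literature'' and stated without proof. The paper offers only a paragraph of intuition: the process $\sqrt{n}\bigl(\sum_{i=1}^{A^n(\cdot)} v^n_i - (\cdot)\bigr)$ converges to a Brownian motion, hence is C-tight, and the modulus-of-continuity bound follows. Your decomposition into the centered service sum (handled via Donsker and the random time change through $\bar A^n$), the scaled arrival increment (handled via the renewal FCLT), and the deterministic drift is exactly the standard way to make that convergence precise, so you are fleshing out the paper's sketch rather than departing from it.

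One minor remark: since $A^n(v)$ is an integer and $\bar A^n(v)=A^n(v)/n$, you have $\lfloor n\bar A^n(v)\rfloor = A^n(v)$ exactly, so there is no floor-function discrepancy and the $o_p(1)$ correction you mention is in fact zero. This does not affect the argument.
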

For intuition as to why this lemma holds, consider the required server effort that all arrivals would contribute 
	to the workload process if none of the customers abandoned or balked. 
Then center this process at each time $t$ by $t$ itself, the potential amount of time that the server could have
	been working had the server never idled.
If this centered process is then scaled up by $\sqrt{n}$ and the limit is taken, the result is a Brownian motion.
It follows that the sequence of processes is tight and this fact is used in the proof of Proposition \ref{prop:tightness} .

\section{Approximations and interpretations} \label{sec:approximations}

Now we use the limits of the previous section to approximate several performance metrics.
Throughout this section, assume that we have a queueing system with arrival rate $\lambda$, service rate $\mu$, 
	balking distribution $F_b$, abandonment distribution $F_d$, standard deviation of interarrival times $\sigma_a$,
	and standard deviation of service times $\sigma_s$. 
To draw connections between the formal limiting procedure with the original queueing system, we make the following notational conventions:
$$
	\mu^n = n,
\qquad
	\beta = \frac{\lambda - \mu}{\sqrt{\mu}},
\qquad
	\lambda^n = n + \beta \sqrt{n}
$$
and
%

$$
	\hat{\sigma} = \sqrt{
		\left(\sigma_a \cdot \lambda \right)^2
		+
		\left(\sigma_s \cdot \mu\right)^2
	}.
$$ 
Notice that defining any two of $\lambda$, $\mu$ and $\beta$ uniquely defines the third.

\subsection{Distribution of the ticket queue in steady state}

Noting the scaling $Q^n = \sqrt{n} \tilde{Q}^n  \approx \sqrt{\mu} \tilde{Q}$, we approximate the steady state distribution of our queueing process using the steady state distribution of the ROU process:
\begin{equation} \label{eq:queueLengthDistribution}
	Q \approx Normal \left( \frac{\lambda - \mu}{\theta}, \frac{\mu \hat\sigma^2}{2 \theta}, 0, \infty \right),
\end{equation}
that is, the ticket queue distribution is approximated by a normal distribution with mean $(\lambda - \mu) / \theta$, variance $\mu \hat\sigma^2 / (2 \theta)$ and truncated to lie within $[0,\infty)$.
Note the substitution: $\beta \approx (\lambda - \mu) / \sqrt{\mu}$.

\subsection{The expected ticket queue length}

We can also approximate the expected queue length as

\begin{equation}  \label{eq:expectedQueueLength}
	E[Q]
		\approx
	\frac{\lambda - \mu}{\theta}
		+
	\hat\sigma
	\sqrt{\frac{ \mu}{2 \theta}}
		h \left(\frac{(1-\rho)}{\hat\sigma}
			\sqrt{\frac{2 \mu}{\theta}} \right)
\end{equation}
where $\rho  = \lambda / \mu$.  If one is also interested in approximations for higher order cumulant moments, see for example, \cite{ref:Pen, ref:Pen2, ref:MP}.

\subsection{The fraction of abandonment}

There are three approximations forwarded for the abandonment probability.  To simplify notation we let $g(0) = F_d'(0)$ and $f(0) = F_b'(0)$.   
The first approximation takes the rate of abandonment from the queue and divides by the total arrival rate:
\begin{equation} \label{eq:abandonmentFraction1} 
	\alpha_1
	\approx
		\frac{g(0) E[Q]}{\lambda}
	\approx
		\frac{\rho-1}{\rho}
			\frac{g(0)}{\theta}
		+
		\frac{\hat\sigma g(0)}{\rho \sqrt{2 \theta \mu}}
		h \left(\frac{(1-\rho)}{\hat\sigma}
			\sqrt{\frac{2 \mu}{\theta}} \right).
\end{equation}
The second approach starts with computing the cumulative distribution function evaluated at the expected delay:
\begin{eqnarray} \label{eq:abandonmentFraction2}
	\alpha_2
	&&\approx
		E\left[ G\left(\frac{Q}{\mu}\right)\right]
	=
		E\left[ G\left(\frac{\tilde{Q}^n}{\sqrt{\mu}} \right) \right]
	\approx
		g(0) \frac{E[\tilde{Q}^n]}{\sqrt{\mu}}
	=
		g(0) \frac{E[Q]}{\mu} \\
	&&\approx
		(\rho-1)
			\frac{g(0)}{\theta}
		+
		\frac{\hat\sigma g(0)} {\sqrt{2 \theta \mu}}
			h \left(\frac{(1-\rho)}{\hat\sigma}
			\sqrt{\frac{2 \mu}{\theta}} \right). \nonumber
\end{eqnarray}
The last approach is the simplification of the first two under the assumption that $\rho = 1$:
\begin{equation} \label{eq:abandonmentFraction3}
	\alpha_3
	\approx
		\frac{\sigma g(0)} {2 \sqrt{\pi \theta \mu}}.
\end{equation}

\subsection{The fraction of balking customers}

The balking probabilities are similar to the abandonment ones and, as such, have three versions.

\begin{equation} \label{eq:balkingFraction1}
	\gamma_1
	\approx
		\frac{\rho-1}{\rho}
			\frac{f(0)}{\theta}
		+
		\frac{\hat\sigma f(0)}{\rho \sqrt{2 \theta \mu}}
			h \left(\frac{(1-\rho)}{\hat\sigma}
			\sqrt{\frac{2 \mu}{\theta}} \right),
\end{equation}

\begin{equation} \label{eq:balkingFraction2}
	\gamma_2
	\approx
		(\rho-1)
			\frac{f(0)}{\theta}
		+
		\frac{\hat\sigma f(0)} {\sqrt{2 \theta \mu}}
			h \left(\frac{(1-\rho)}{\hat\sigma}
			\sqrt{\frac{2 \mu}{\theta}} \right),
\end{equation}
and when $\rho = 1$, we have that
\begin{equation} \label{eq:balkingFraction3}
	\gamma_3
	\approx
		\frac{\sigma f(0)} {2 \sqrt{\pi \theta \mu}}.
\end{equation}
	
\subsection{The expected number of unresolved abandoned tickets}

Given the number of unresolved tickets a fixed fraction of these are expected to be abandoned:
\begin{equation} \label{eq:unresolvedAbandonedTicketDist1}
	\mathbb{E} [X(t)]
	\approx
		\frac{1}{2} G\left(\frac{Q(t) }{\mu} \right) Q(t)
	\approx
		 \frac{g(0) Q(t)^2}{2 \mu}
	 \approx
	 	\frac{g(0)}{2} \tilde{Q}^2
\approx
	 	\frac{g(0)}{2} E[\tilde{Q}(\infty)]^2.
\end{equation}

\subsection{Interpretation}
So why should we believe that there is very little difference between the ticket queue and the standard queue in steady state?
In the absence of balking, one would assume that the number of customers in the standard queue would be smaller than that in the ticket queue, as the former rids itself of customers who will not add to the server workload.
One mechanism that reduces this difference is that the ticket queue, albeit longer, ultimately sees  less work than its queue length would suggest.
Hence it must be resolving its ticket queue faster than the standard queue is processing its customers.
Moreover, the concentration of abandoned tickets is typically greater among the tickets close to the front of the queue, as these tickets have been in circulation the longest.
But the closer the ticket is to the front, the sooner it gets resolved.
The more abandoned tickets, the faster the server resolves such tickets.
Hence, the ticket queue tends to drive itself back toward the standard queue status the farther away it deviates from it.

Adding in the balking customers further lessens the difference between the two queueing scenarios.
If the ticket queue is longer than the standard queue then the former has more customers balking at the front end.
To conclude, as the distance grows between the length of the ticket and standard queues, so do the forces that force the coupling of the two queueing models.
This notion is formally expressed in Theorem \ref{thm:asymptoticCoupling}.

\section{Proof of the main results} \label{sec:proof}
The results that follow lead up to the proof of the main result at the conclusion of this section.
Some proofs are delayed until the Appendix.

\subsection{Asymptotic Boundedness}
We argue first that the scaled queue length processes  and the workload processes are asymptotically bounded.
\begin{lemma} \label{lem:bounded}
	Under \eqref{eq:thm:main}, we have that for any $t,\eta >0$ there exists a $K=K(\eta) >0$ such that for each 
	$\alpha \in \{S,T\}$,
	\begin{equation} \label{eq:lemBoundedQueue}
		\limsup_{n \to \infty} \Prob \left( \sup_{s \in [0,t]} \tilde{Q}^n_\alpha(s) > K \right) < \eta
	\end{equation}
	and
	\begin{equation} \label{eq:lemBoundedWorkload}
		\limsup_{n \to \infty} \Prob \left( \sup_{s \in [0,t]} \tilde{W}^n_\alpha(s) > K \right) < \eta.
	\end{equation}
\end{lemma}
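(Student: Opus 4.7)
My plan is to exploit the Skorokhod-map representation \eqref{eq:queueIdle}, $\tilde{Q}^n_\alpha = \Phi_\theta(\tilde{Q}^n(0) + \tilde{X}^n_\alpha)$, together with the Lipschitz continuity of $\Phi_\theta$ in the supremum norm. Since $\Phi_\theta(0) = 0$, there is a deterministic constant $L = L(\theta, t)$ for which
\[
        \sup_{s \in [0,t]} \tilde{Q}^n_\alpha(s) \le L \Bigl( |\tilde{Q}^n(0)| + \sup_{s \in [0,t]} |\tilde{X}^n_\alpha(s)| \Bigr),
\]
so, by assumption \eqref{eq:thm:main} on the initial condition, the bound \eqref{eq:lemBoundedQueue} reduces to a bound in probability on $\sup_{s \le t}|\tilde{X}^n_\alpha(s)|$.

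Inspecting the decomposition \eqref{eq:mod:X^tilde}, every summand of $\tilde{X}^n_\alpha$ other than $\tilde{\delta}^n_\alpha$ is free of $\tilde{Q}^n_\alpha$. The centered arrival and service processes $\tilde{A}^n$ and $\tilde{S}^n_\alpha(T^n_\alpha)$ are bounded in probability on $[0,t]$ by the Donsker-type invariance principles underlying Lemmas~\ref{lem:uniformArrivals}--\ref{lem:shortPotentialWork}, noting that $T^n_\alpha(t) \le t$. The balking and reneging centerings $\tilde{M}^n_{b,\alpha}$ and $\tilde{M}^n_{d,\alpha}$ are $(1/\sqrt{n})$-scaled sums of bounded zero-mean martingale increments of variance at most $1/n$, so Doob's inequality combined with the summand-count bound of Lemma~\ref{lem:boundedArrivals} bounds their suprema in probability. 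The term $\tilde{\epsilon}^n_\alpha$ is asymptotically negligible by the forward-referenced Proposition~\ref{prop:noInitialReneging}, and the drift $(\lambda^n-\mu^n)t/\sqrt{n}$ is bounded by \eqref{eq:limiting beta}.

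The remaining summand $\tilde{\delta}^n_\alpha$ of \eqref{eq:mod:delta^tilde} depends on $\tilde{Q}^n_\alpha$ through $\theta\int_0^t \tilde{Q}^n_\alpha(s)\,ds$; I would break the circularity via the stopping time $\tau^n_K = \inf\{s \ge 0 : \tilde{Q}^n_\alpha(s) > K\}$. On $[0, t \wedge \tau^n_K]$ the arguments $\sqrt{n}\tilde{Q}^n_\alpha(t^n_i-)/\mu^n$ are of order $K/\sqrt{n}$, so differentiability of $F_b$ and $F_d$ at zero delivers a first-order expansion whose remainder is uniformly negligible after summation and division by $\sqrt{n}$; Lemma~\ref{lem:uniformArrivals} then converts the empirical arrival average of $\tilde{Q}^n_\alpha$ into the integral $\int_0^\cdot \tilde{Q}^n_\alpha\,ds$, so that the $\theta$-integrals cancel inside $\tilde{\delta}^n_\alpha$. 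The residual initial-service piece $(\hat{S}^n - \mu^n\min(\cdot,W^n(0)))/\sqrt{n}$ vanishes by Lemma~\ref{lem:serviceTimeFLLN}. Consequently $\sup_{s \le t \wedge \tau^n_K}|\tilde{X}^n_\alpha(s)| \le M$ with probability at least $1-\eta/2$ for some $M = M(\eta)$; choosing $K > L(K_0 + M)$ with $\limsup_n \Prob(|\tilde{Q}^n(0)| > K_0) < \eta/2$ forces $\tau^n_K > t$ with probability at least $1-\eta$, yielding \eqref{eq:lemBoundedQueue}. The workload bound \eqref{eq:lemBoundedWorkload} follows from the pathwise inequality $W^n_\alpha(t) \le (1/\mu^n)\sum_i v_i$ summed over the customers represented in queue, which, on the bounded-queue event just established and in conjunction with Lemmas~\ref{lem:boundedMaxService} and \ref{lem:serviceTimeFLLN}, gives $\sup_{s \le t} \tilde{W}^n_\alpha(s) \le \sup_{s \le t}\tilde{Q}^n_\alpha(s)/\mu + o_p(1)$.

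The main obstacle is precisely this self-reference of $\tilde{\delta}^n_\alpha$: a direct bound on $\tilde{X}^n_\alpha$ presupposes a bound on $\tilde{Q}^n_\alpha$, which is what one is trying to prove. The stopping-time localization decouples the two directions, after which Lipschitz continuity of $\Phi_\theta$ closes the loop, provided $F_b$ and $F_d$ are regular enough near zero for the Taylor remainder to vanish on the diffusion scale.
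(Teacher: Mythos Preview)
Your approach is considerably more involved than needed and, as written, is circular. You invoke Proposition~\ref{prop:noInitialReneging} to dispose of $\tilde{\epsilon}^n_\alpha$, but that proposition (i) only bounds the initial-reneging piece $\hat{R}^n_\alpha$, not the full $\tilde{\epsilon}^n_\alpha = (\hat{R}^n_\alpha + R^n_\alpha - R^{0,n}_\alpha)/\sqrt{n}$ from \eqref{eq:epslion^n}, and (ii) is itself proved in the paper \emph{using} Lemma~\ref{lem:bounded}. The missing piece $R^n_\alpha - R^{0,n}_\alpha$ is handled in the paper by Propositions~\ref{prop:R2R}--\ref{prop:R0R1}, each of which also relies on Lemma~\ref{lem:bounded} to bound $W^n_\alpha(t^n_i-)$ and $Q^n_\alpha(t^n_i-)$ at all arrival epochs. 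Your stopping time $\tau^n_K$ localizes only $\tilde{Q}^n_\alpha$, so it does not control $W^n_\alpha(t^n_i-)$, which enters $R^n_\alpha$ and $R^{1,n}_\alpha$ directly; to close the loop you would need a joint stopping time on $(\tilde{Q}^n_\alpha,\tilde{W}^n_\alpha)$ and self-contained, localized versions of all of Propositions~\ref{prop:R2R}--\ref{prop:noInitialReneging} on the stopped interval. That is substantially more work than your sketch suggests, and none of it can be deferred to those later propositions without begging the question.

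The paper's proof sidesteps the self-referential decomposition \eqref{eq:mod:X^tilde} entirely by a pathwise coupling: build, on the same primitives, a third queue with neither balking nor abandonment. Since balking and abandonment only remove customers and work, this third process dominates $Q^n_\alpha$ and $W^n_\alpha$ pathwise for both $\alpha\in\{S,T\}$. The dominating system is a standard $GI/GI/1$ queue whose diffusion-scaled queue length and workload converge to reflected Brownian motion and are therefore bounded in probability on $[0,t]$; the bounds \eqref{eq:lemBoundedQueue} and \eqref{eq:lemBoundedWorkload} are inherited directly. No analysis of $\tilde{X}^n_\alpha$, no stopping time, and no forward references are required.
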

\begin{proof}
	The queue length processes for the ticket and standard queues can both be bounded path-wise by a 
		third queueing process that contains neither balking nor abandonment.
	It is standard that this third scaled queue length process converges to a reflected Brownian motion.
	It also follows that this third scaled queue length process exhibits the boundedness expressed in \eqref{eq:lemBoundedQueue};
		e.g., see Lemma 3.4 of \cite{ref:J}.
	And because this third process bounds the ticket and standard queueing processes for every time $t$, 
		the result in \eqref{eq:lemBoundedQueue} follows.  The same arguments hold for the workload processes in \eqref{eq:lemBoundedWorkload} and this concludes the proof.
\end{proof}
Lemma \ref{lem:bounded} emphasizes the orders of magnitude of the queueing and workload processes.
This lemma will be used frequently in conjunction with the balking and abandonment distributions 
	to place bounds on abandonment and balking frequencies.

\subsection{Abandonment and balking frequencies}
Next, we cover several properties of the accumulation of balking and abandonment events among the arriving jobs.  The following lemmas, which besides Lemma \ref{lem:theta2}, are provided without proof,
	use the fact that the derivatives of the balking and abandonment distributions exist at zero; see \eqref{eq:theta}.
The first lemma is used throughout this section and follows from a straightforward application of Taylor's Expansion.
\begin{lemma} \label{lem:theta}
	For any $K>0$,
	$$
		 \frac{F_b(K/\sqrt{n})}{K/\sqrt{n}} + \frac{F_d(K/\sqrt{n})}{K/\sqrt{n}}  < 2 \theta 
	$$
	for sufficiently large $n$.
\end{lemma}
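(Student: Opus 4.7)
The plan is to read the claim as a direct consequence of the fact that $F_b$ and $F_d$ are differentiable at the origin with $F_b(0)=F_d(0)=0$. Since $K/\sqrt{n} \downarrow 0$ as $n\to\infty$, the difference quotients
$$
\frac{F_b(K/\sqrt{n}) - F_b(0)}{K/\sqrt{n}}, \qquad \frac{F_d(K/\sqrt{n}) - F_d(0)}{K/\sqrt{n}}
$$
converge respectively to $F_b'(0)$ and $F_d'(0)$, by the very definition of the one-sided derivative at zero (which is what is assumed for cumulative distribution functions supported on $[0,\infty)$). Because both distribution functions vanish at zero, these quotients equal $F_b(K/\sqrt{n})/(K/\sqrt{n})$ and $F_d(K/\sqrt{n})/(K/\sqrt{n})$.

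Summing the two limits gives
$$
\frac{F_b(K/\sqrt{n})}{K/\sqrt{n}} + \frac{F_d(K/\sqrt{n})}{K/\sqrt{n}} \; \longrightarrow \; F_b'(0) + F_d'(0) \; = \; \theta,
$$
so for any prescribed slack $\eta>0$ one can find $n_0$ (depending on $K$ and $\eta$) such that the left-hand side lies within $\eta$ of $\theta$ for all $n\ge n_0$. The hypothesis \eqref{eq:theta} specifies $\theta>0$, so choosing $\eta=\theta$ (or even $\eta = \theta/2$ to keep some room) yields the strict bound $<2\theta$ for all sufficiently large $n$, which is the statement of the lemma.

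There is essentially no obstacle here: the result is just a packaging of the definition of the derivative at $0$ together with the strict positivity $\theta>0$, recorded in this form because later proofs will need to substitute the random queue length in place of $K$ and apply the inequality pathwise (via the asymptotic boundedness given by Lemma \ref{lem:bounded}). The only thing to be careful about is that the derivatives are one-sided at the boundary of the support; if one wished to make the bound uniform in $K$ over a compact set $[0,L]$, one would invoke the mean value form of the remainder, but the lemma as stated is pointwise in $K$ and follows immediately.
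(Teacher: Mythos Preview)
Your argument is correct and matches the paper's own justification: the paper states the lemma ``follows from a straightforward application of Taylor's Expansion,'' which is exactly the difference-quotient convergence you spell out. Nothing more is needed.
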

The second lemma is similar.
\begin{lemma}\label{lem:theta2}
	For any $\delta, K>0$,
	$$
		\sup_{s \in [0,K]} 
		\left(
			\frac{F_b((s+\delta)/\sqrt{n}) - F_b(s/\sqrt{n})}{\delta/\sqrt{n}} 
			+
			\frac{F_d((s+\delta)/\sqrt{n}) - F_d(s/\sqrt{n})}{\delta/\sqrt{n}} 
		\right)
		< 2 \theta 
	$$
	for sufficiently large $n$.	
\end{lemma}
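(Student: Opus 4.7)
The plan is to show that each of the two difference quotients in the supremum converges uniformly in $s \in [0,K]$ to $F_b'(0)$ and $F_d'(0)$ respectively, so that the sum approaches $\theta < 2\theta$ and therefore lies below $2\theta$ for $n$ large enough. The key input is the existence of the derivatives at zero, exactly the hypothesis used to define $\theta$ in \eqref{eq:theta}.

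First I would write the expansion at $0$: since $F_b'(0)$ exists and $F_b(0)=0$, we have $F_b(h)=h F_b'(0)+\phi_b(h)$ where $\phi_b(h)/h\to 0$ as $h\to 0^+$, and analogously $F_d(h)=h F_d'(0)+\phi_d(h)$. Plugging into the difference quotient,
\[
    \frac{F_b((s+\delta)/\sqrt{n})-F_b(s/\sqrt{n})}{\delta/\sqrt{n}}
    = F_b'(0) + \frac{\phi_b((s+\delta)/\sqrt{n})-\phi_b(s/\sqrt{n})}{\delta/\sqrt{n}},
\]
and similarly for $F_d$. So the task reduces to showing that the error term is uniformly small in $s\in[0,K]$.

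Next I would bound the error uniformly. Fix $\epsilon>0$. By the definition of derivative at $0$, there exists $h_0>0$ such that $|\phi_b(h)|+|\phi_d(h)|\le \epsilon\, h$ for all $h\in[0,h_0]$. Since $s\in[0,K]$ implies $s/\sqrt{n}$ and $(s+\delta)/\sqrt{n}$ both lie in $[0,(K+\delta)/\sqrt{n}]$, there exists $N$ depending only on $K,\delta,h_0$ such that for $n\ge N$ both arguments are below $h_0$ uniformly in $s$. Consequently, for each $s\in[0,K]$,
\[
    \left|\frac{\phi_b((s+\delta)/\sqrt{n})-\phi_b(s/\sqrt{n})}{\delta/\sqrt{n}}\right|
    \le \frac{\epsilon\,(s+\delta)/\sqrt{n}+\epsilon\, s/\sqrt{n}}{\delta/\sqrt{n}}
    \le \frac{\epsilon\,(2K+\delta)}{\delta},
\]
and the same bound holds for the $F_d$ contribution. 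Summing, the total error is at most $2\epsilon(2K+\delta)/\delta$, which can be made arbitrarily small by choice of $\epsilon$.

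Finally I would combine the two pieces: for $n$ sufficiently large,
\[
    \sup_{s\in[0,K]}\left(\frac{F_b((s+\delta)/\sqrt{n})-F_b(s/\sqrt{n})}{\delta/\sqrt{n}}+\frac{F_d((s+\delta)/\sqrt{n})-F_d(s/\sqrt{n})}{\delta/\sqrt{n}}\right)
    \le \theta + \frac{2\epsilon(2K+\delta)}{\delta}.
\]
Choosing $\epsilon$ so that the right-hand side is below $2\theta$ (possible since $\theta>0$, so any $\epsilon<\theta\delta/(2(2K+\delta))$ works) concludes the argument. The only real subtlety is the uniformity over $s$; this is handled because $(K+\delta)/\sqrt{n}\to 0$ forces both evaluation points below the threshold $h_0$ simultaneously for every $s\in[0,K]$, so one $N$ suffices. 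Everything else is essentially the same Taylor-expansion reasoning as in Lemma \ref{lem:theta}.
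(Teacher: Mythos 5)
Your proof is correct and uses exactly the approach the paper intends: the paper states that Lemma \ref{lem:theta} ``follows from a straightforward application of Taylor's Expansion'' and that Lemma \ref{lem:theta2} is ``similar,'' which is precisely your first-order expansion $F_b(h)=hF_b'(0)+\phi_b(h)$ with $\phi_b(h)=o(h)$ (and likewise for $F_d$), followed by a uniform control of the remainder over $s\in[0,K]$. The uniformity argument is handled correctly: since both evaluation points $s/\sqrt{n}$ and $(s+\delta)/\sqrt{n}$ lie in $[0,(K+\delta)/\sqrt{n}]$, a single $N$ forces all of them below the threshold $h_0$ simultaneously, and the resulting error bound $\epsilon(2K+\delta)/\delta$ is independent of $s$ and can be made strictly less than $\theta$ by choice of $\epsilon$, yielding the required strict inequality $<2\theta$.
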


The next result shows that one can choose a sufficiently small $\delta$ such that,
	uniformly over all subintervals of $[0,t]$ of size $\delta$,
	the total number of jobs that arrive in any subinterval that either abandon or balk
	is arbitrarily small.
The proof of this and subsequent results can be found in the Appendix.
\begin{proposition} \label{prop:boundedBalkingAbandonment_shortInterval}
	For any $\epsilon, \eta, t >0$ and $K >0$, there exists a $\delta >0$ such that
	\begin{equation} \label{eq:proof:prop:boundedBalkingAbandonment.0}
		\limsup_{n \to \infty}
		\Prob \left( 
			\sup_{s \le t}
			\sum_{i = A^n(s)+1}^{A^n(s+\delta)}
			\left(
				1(b_i \le K/\sqrt{n}) + 1(d_i \le K/\sqrt{n}) 
			\right)
			> \epsilon \sqrt{n}
		\right)
		< \eta.
	\end{equation}
\end{proposition}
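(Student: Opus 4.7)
The plan is to reduce the supremum over starting times $s$ to a maximum over a deterministic partition of $[0,t+\delta]$ into subintervals of length $\delta$, exploit the independence of the thresholds $(b_i,d_i)$ from the arrival process, and finish with a Chebyshev estimate on i.i.d.\ sums whose second moments vanish under the scaling. Set $\xi_i := 1(b_i \le K/\sqrt{n}) + 1(d_i \le K/\sqrt{n})$. Since $\{(b_i,d_i)\}_{i\ge 1}$ is i.i.d.\ and independent of $\{u_j\}_{j\ge 1}$ (and hence of $A^n$), the sequence $\{\xi_i\}$ is i.i.d.\ and independent of $A^n$. Lemma \ref{lem:theta} gives, for large $n$, $p_n := E[\xi_i] = F_b(K/\sqrt{n}) + F_d(K/\sqrt{n}) \le 2\theta K/\sqrt{n}$, and $\xi_i \in \{0,1,2\}$ yields $\mathrm{Var}(\xi_i) \le E[\xi_i^2] \le 2 p_n$.

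Partition $[0,t+\delta]$ into $N = O(1/\delta)$ consecutive intervals $J_k = [k\delta,(k+1)\delta)$. Any $(s,s+\delta]$ with $s \in [0,t]$ is contained in two consecutive intervals $J_k \cup J_{k+1}$ with $k = \lfloor s/\delta \rfloor$, so nonnegativity of $\xi_i$ yields
$$
	\sup_{s \le t} \sum_{i=A^n(s)+1}^{A^n(s+\delta)} \xi_i
	\;\le\;
	\max_{k} \sum_{i=A^n(k\delta)+1}^{A^n((k+2)\delta)} \xi_i.
$$
By Lemma \ref{lem:uniformArrivals} applied on $[0,t+\delta]$, the event $E_n := \{\max_k [A^n((k+2)\delta) - A^n(k\delta)] \le M\}$ with $M := \lceil 3\mu n \delta \rceil$ has probability tending to one. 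On $E_n$, each inner sum is dominated by $\sum_{i=A^n(k\delta)+1}^{A^n(k\delta)+M} \xi_i$; conditioning on $A^n(k\delta)$ and using independence of $\{\xi_i\}$ from $A^n$, this has the same \emph{marginal} distribution as $\sum_{i=1}^{M} \xi_i$, whose mean is at most $M p_n \le 6\mu \theta K \delta \sqrt{n}$ and variance at most $2 M p_n \le 12 \mu \theta K \delta \sqrt{n}$.

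Choose $\delta := \min\{1,\, \epsilon/(12\mu\theta K)\}$ so that the mean is below $\epsilon\sqrt{n}/2$; then Chebyshev's inequality gives
$$
	\Prob\!\left(\sum_{i=1}^{M}\xi_i > \epsilon\sqrt{n}\right)
	\;\le\;
	\frac{12 \mu \theta K \delta \sqrt{n}}{(\epsilon\sqrt{n}/2)^2}
	\;=\;
	O(n^{-1/2}).
$$
A union bound over the $O(1/\delta)$ fixed values of $k$ preserves this rate, and combined with $\Prob(E_n^c) \to 0$ this shows the limsup in \eqref{eq:proof:prop:boundedBalkingAbandonment.0} is in fact zero, hence less than $\eta$. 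The main obstacle is handling the random index set $[A^n(s)+1, A^n(s+\delta)]$ \emph{uniformly} in $s$: the partition-and-maximum trick converts this into a maximum over $O(1/\delta)$ fixed starting times, after which the independence of $\{\xi_i\}$ from $A^n$ reduces each inner sum to an i.i.d.\ sum amenable to Chebyshev. The variance bound coming from $\xi_i \in \{0,1,2\}$ already yields a decay of $n^{-1/2}$, so no sharper concentration inequality is needed.
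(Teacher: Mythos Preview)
Your proof is correct and follows essentially the same approach as the paper: partition $[0,t+\delta]$ into $O(1/\delta)$ blocks, use Lemma~\ref{lem:uniformArrivals} to bound the number of arrivals per (double) block by a deterministic $M = O(n\delta)$, exploit independence of the thresholds from $A^n$ to reduce to an i.i.d.\ sum of length $M$, then center and apply a second-moment inequality together with a union bound over blocks. The paper separates the $b_i$ and $d_i$ contributions and invokes Kolmogorov's inequality where you use Chebyshev, and it picks $\delta = \epsilon/(4\mu K\theta)$ rather than your $\epsilon/(12\mu\theta K)$, but these are cosmetic differences; your observation that the limsup is in fact zero (not merely $<\eta$) is a harmless strengthening.
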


Likewise, for a sufficiently small time interval, the amount of potential workload contribution 
	associated with balking or abandoning jobs arriving during the interval is smaller than order $1/\sqrt{n}$.
This result is a key element in the proof of tightness of our scaled queue length and workload processes;
	 see Proposition \ref{prop:tightness}.
\begin{proposition} \label{prop:boundedLostWorkload_shortInterval}
	For any $\eta, t >0$ and $K >0$, there exists a $\delta$ such that
	\begin{equation} \label{eq:proof:prop:boundedBalkingAbandonment_shortInterval}
		\limsup_{n \to \infty}
		\Prob \left( 
			\sup_{s \le t}
			\sum_{i = A^n(s)+1}^{A^n(s+\delta)}
			v^n_i \cdot
			\left(
				 1(b_i \le K / \sqrt{n}) + 1(d_i \le K/\sqrt{n}) 
			\right)
			> \frac{\epsilon}{ \sqrt{n}}
		\right)
		< \eta.
	\end{equation}
\end{proposition}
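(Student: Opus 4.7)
The plan is to leverage Proposition \ref{prop:boundedBalkingAbandonment_shortInterval}, which already controls the number of balking or abandoning arrivals over any short window, together with the fact that each service time $v^n_i$ has mean $1/\mu^n$ of order $1/n$. Heuristically, in a window of length $\delta$ there are $O(n\delta)$ arrivals, of which only $O(K\sqrt{n}\,\delta)$ qualify for $I_i := 1(b_i \le K/\sqrt{n}) + 1(d_i \le K/\sqrt{n})$; each contributes service time of order $1/n$, giving an overall expected sum of order $K\delta/\sqrt{n}$. Taking $\delta$ small then beats the target $\epsilon/\sqrt{n}$, and the fluctuation around this expectation turns out to be of strictly smaller order.

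To make this rigorous I would decompose
\[
\sum_{i = A^n(s)+1}^{A^n(s+\delta)} v^n_i \, I_i
\;=\;
\frac{1}{\mu^n} \sum_{i = A^n(s)+1}^{A^n(s+\delta)} I_i
\;+\;
\sum_{i = A^n(s)+1}^{A^n(s+\delta)} \bigl(v^n_i - 1/\mu^n\bigr) I_i.
\]
The first piece is dispatched directly by Proposition \ref{prop:boundedBalkingAbandonment_shortInterval}: apply it with parameter $\epsilon' := \epsilon\mu/2$, so that for a sufficiently small $\delta$ the indicator sum is at most $\epsilon'\sqrt{n}$ uniformly in $s$ with probability at least $1-\eta/2$; dividing by $\mu^n \sim \mu n$ produces the desired $\epsilon/(2\sqrt{n})$ bound.

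For the second (centered) piece, independence of $v^n_i$ from $(b_i,d_i)$ makes each summand mean zero, with variance bounded by $E[(v^n_i)^2]\,E[I_i^2] = O(n^{-2}) \cdot O(n^{-1/2}) = O(n^{-5/2})$ (using Lemma \ref{lem:theta} to expand $E[I_i^2] = F_b(K/\sqrt n)+F_d(K/\sqrt n)+2F_b(K/\sqrt n)F_d(K/\sqrt n)$). Since Lemma \ref{lem:boundedArrivals} caps the number of contributing indices on $[0,t+\delta]$ at $O(n)$ with high probability, the total variance accumulated by the martingale $M^n_k := \sum_{i=1}^k (v^n_i - 1/\mu^n) I_i$ over the relevant range is $O(n^{-3/2})$, so Doob's maximal inequality gives fluctuations of order $n^{-3/4} = o(n^{-1/2})$. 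Uniformity in $s$ is then obtained by discretizing $[0,t]$ on a mesh of size $\delta/2$, bracketing each sliding window $[A^n(s)+1, A^n(s+\delta)]$ between two martingale increments anchored at grid points, and combining the maximal inequality with a union bound over the $O(1/\delta)$ grid points. The main technical wrinkle, as in Proposition \ref{prop:boundedBalkingAbandonment_shortInterval}, is precisely this supremum control; the comfortable scale gap between $n^{-3/4}$ and $n^{-1/2}$ leaves enough room for the argument to close.
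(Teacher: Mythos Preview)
Your proposal is correct and follows essentially the same route as the paper: the same decomposition $v^n_i I_i = (1/\mu^n)I_i + (v^n_i - 1/\mu^n)I_i$, the same Kolmogorov/Doob maximal inequality with block discretization for the centered-service-time piece, and the same variance bookkeeping. Your one simplification is to handle the $(1/\mu^n)\sum I_i$ term by invoking Proposition~\ref{prop:boundedBalkingAbandonment_shortInterval} directly, whereas the paper instead splits that piece again into $(1/\mu^n)(I_i - \mathbb E I_i)$ plus the deterministic mean and bounds each separately; your shortcut is cleaner and avoids duplicating work already done.
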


So far our propositions have been able to replace the queueing and workload processes with upper bounds
	early in the proofs. 
For the following result, 
	where we show that  the centered and scaled balking and approximate abandonment processes converge to zero,
	such substitutions cannot be made immediately.
\begin{proposition}{Centered balking and reneging processes are negligible.} \label{prop:centeredBalkingReneging}
	Under the assumptions of Theorem \ref{thm:main}, for each $\alpha \in \{S,T\}$, and 
	any $\varepsilon, \eta, t > 0$,
	\begin{equation} \label{eq:centeredBalking}
		\limsup_{n \to \infty}
		 \Prob \left(
		 	\sup_{s \in [0,t]}
			\left|
				\tilde{M}^n_{b,\alpha}(s)
			\right|
			> \varepsilon
		\right) 
		< \eta
	\end{equation}
and
		\begin{equation} \label{eq:centeredAbandonment}
		\limsup_{n \to \infty}
		 \Prob \left(
		 	\sup_{s \in [0,t]}
			\left|
				\tilde{M}^n_{d,\alpha}(s)
			\right|
			> \varepsilon
		\right) 
		< \eta.
	\end{equation}
\end{proposition}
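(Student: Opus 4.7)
The plan is to exploit the martingale structure of $\tilde{M}^n_{b,\alpha}$ and $\tilde{M}^n_{d,\alpha}$ and conclude via Doob's $L^2$ maximal inequality. Let $\mathcal{G}^n_i$ denote the $\sigma$-field generated by all primitives up to the instant just before the $i$-th arrival. Because $b_i$ is independent of $\mathcal{G}^n_i$ while $Q^n_\alpha(t^n_i-)$ is $\mathcal{G}^n_i$-measurable,
$$
\Exp\bigl[1(b_i \le Q^n_\alpha(t^n_i-)/\mu^n) \,\big|\, \mathcal{G}^n_i\bigr] = F_b(Q^n_\alpha(t^n_i-)/\mu^n),
$$
which matches exactly the compensator subtracted in \eqref{eq:mod:balking^tilde}. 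Thus $\tilde{M}^n_{b,\alpha}(\cdot)$ is a discrete-parameter martingale indexed by $t$; the same reasoning, applied to $d_i$, makes $\tilde{M}^n_{d,\alpha}$ a martingale as well.

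Next I would localize to a high-probability event on which the queue length stays uniformly small on the time interval that matters. Given $\varepsilon,\eta,t>0$, Lemma \ref{lem:bounded} supplies a $K$ such that $\Omega^n_K \equiv \{\sup_{s \le t/\mu + 1}\tilde{Q}^n_\alpha(s) \le K\}$ has probability exceeding $1 - \eta\varepsilon^2/(4t)$ for all large $n$, while Lemma \ref{lem:uniformArrivals} ensures $\{t^n_{\lfloor nt \rfloor} \le t/\mu + 1\}$ is equally likely. On the intersection, every $i \le \lfloor nt \rfloor$ satisfies $Q^n_\alpha(t^n_i-)/\mu^n \le 2K/(\mu\sqrt n)$ (using $\mu^n/n \to \mu$), and Lemma \ref{lem:theta} then yields the key bound $F_b(Q^n_\alpha(t^n_i-)/\mu^n) \le 4\theta K/(\mu \sqrt n)$.

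Third, I would bound the second moment via orthogonality of the martingale differences:
\begin{equation*}
\Exp\bigl[(\tilde{M}^n_{b,\alpha}(t))^2\bigr]
= \frac{1}{n}\sum_{i=1}^{\lfloor nt \rfloor}\Exp\bigl[F_b(Q^n_\alpha(t^n_i-)/\mu^n)(1 - F_b(\cdot))\bigr]
\le t\left(\frac{4\theta K}{\mu\sqrt n} + \Prob((\Omega^n_K)^c)\right),
\end{equation*}
which is smaller than $\eta\varepsilon^2/2$ for all large $n$. Doob's $L^2$ inequality, $\Prob(\sup_{s \le t}|\tilde{M}^n_{b,\alpha}(s)| > \varepsilon) \le \Exp[(\tilde{M}^n_{b,\alpha}(t))^2]/\varepsilon^2$, then establishes \eqref{eq:centeredBalking}, and the identical argument with $b_i$ replaced by $d_i$ and $F_b$ by $F_d$ yields \eqref{eq:centeredAbandonment}.

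The main obstacle is the localization step: the sum runs over every arrival index up to $\lfloor nt \rfloor$, so Lemma \ref{lem:theta} can be invoked only after uniform pathwise control of $Q^n_\alpha$ has been secured via Lemma \ref{lem:bounded} (together with the arrival-rate control of Lemma \ref{lem:uniformArrivals} to translate the bound on $t$ to a bound on the summation index). Once the $O(1/\sqrt n)$ per-summand conditional variance is in place, the martingale machinery is essentially mechanical.
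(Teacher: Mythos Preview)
Your approach is essentially the same as the paper's: both identify the martingale structure, apply Kolmogorov's/Doob's maximal inequality to pass to the terminal second moment, and then bound the per-summand conditional variance by $F_b(Q^n_\alpha(t^n_i-)/\mu^n)$ and control it via Lemma~\ref{lem:bounded}. The paper invokes Burkholder's inequality where you use orthogonality of martingale increments directly (which is cleaner, since Burkholder is overkill for a plain $L^2$ bound), and you are slightly more careful than the paper about ensuring $t^n_{\lfloor nt\rfloor}$ lies in a bounded interval before applying Lemma~\ref{lem:bounded}; otherwise the arguments coincide.
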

\textit{The implications here are that the balking and reneging random variables can be replaced with their respective distribution functions.}

\subsection{Coupled Processes}

An interpretation of Lemma \ref{lem:bounded} is that the queue length is order $\sqrt{n}$.
In a model with no balking or abandonment, this fact would be sufficient to draw a linear relationship
	between the queue length and the workload of the form $Q/\mu \approx W$.
In the presence of balking or abandonment, this relationship is justified in \cite{ref:WG}.
The key is that the number of jobs in queue who do not contribute to the workload is 
	negligible with respect to $\sqrt{n}$. 
We have the same result here.
\begin{proposition} {State space collapse.}\label{prop:queueWorkload} 
	Under the conditions of \eqref{eq:thm:main}, we have that for any $t, \varepsilon, \eta >0$ and each 
	$\alpha \in \{S,T\}$,
	$$
		\limsup_{n \to \infty} 
		\Prob \left( 
			\sup_{s \in [0,t]} 
			\left |\tilde{Q}^n_\alpha(s) - \mu \tilde{W}^n_\alpha(s) \right|
			> \varepsilon 
		\right) < \eta.
	$$
\end{proposition}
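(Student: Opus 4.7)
The plan is to work on the high-probability event $E^n_K := \{\sup_{s \le t}\tilde Q^n_\alpha(s) \le K,\ \sup_{s \le t}\tilde W^n_\alpha(s) \le K\}$, which by Lemma \ref{lem:bounded} has probability at least $1-\eta$ for $K$ sufficiently large; on this event $W^n_\alpha(t^n_i-) \le K/\sqrt{n}$ for every $i$ with $t^n_i \le t$. Substituting \eqref{eq:mod:hatQueue^n} into \eqref{eq:mod:queue^n} and subtracting $\mu^n$ times \eqref{eq:mod:workload^n}, the $A^n(s)-B^n_\alpha(s)$ contributions cancel against a count approximation of $\sum_{i \le A^n(s)} v_i \cdot 1(b_i > Q^n_\alpha(t^n_i-)/\mu^n) \cdot 1(d_i > W^n_\alpha(t^n_i-))$, leaving $Q^n_\alpha(s) - \mu^n W^n_\alpha(s)$ as a sum of (a) the initial discrepancy $Q^n(0) - \mu^n W^n(0)$, (b) a server-effort balance $\mu^n T^n_\alpha(s) - \hat S^n(s) - S^n_\alpha((T^n_\alpha(s)-W^n(0))^+)$, (c) $-\hat R^n_\alpha(s)$, (d) the reneging gap $R^n_{\alpha,\infty}(s) - R^n_\alpha(s)$, where
\[
R^n_{\alpha,\infty}(s) := \sum_{i=1}^{A^n(s)} 1(b_i > Q^n_\alpha(t^n_i-)/\mu^n) \cdot 1(d_i \le W^n_\alpha(t^n_i-))
\]
counts arrivals by $s$ that will \emph{ever} renege, and (e) a functional-law-of-large-numbers remainder from the $v_i$ substitution. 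Each piece must be shown to be $o(\sqrt{n})$ uniformly in $s \in [0,t]$.

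Piece (a) is $o(\sqrt{n})$ by the initial-condition assumption \eqref{eq:thm:main} together with $\mu^n/n \to \mu$. Piece (b) is $o(\sqrt{n})$ by functional laws of large numbers for service times: Lemma \ref{lem:serviceTimeFLLN} gives $\hat S^n(s) \approx \mu^n\min(s,W^n(0))$, and a standard invariance principle for the i.i.d.\ sequence $\{v_i\}$ (with $A^n(t)$ capped via Lemma \ref{lem:boundedArrivals}) gives $S^n_\alpha(u) \approx \mu^n u$ uniformly over the relevant compact range of $u$; the same invariance principle handles (e), since each indicator $1(b_i > Q^n_\alpha(t^n_i-)/\mu^n)\cdot 1(d_i > W^n_\alpha(t^n_i-))$ is independent of $v_i$ and introduces no bias. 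Piece (c) is nondecreasing in $s$ and, on $E^n_K$, is dominated by $\#\{i \le Q^n(0):\hat d_i \le K/\sqrt{n}\}$, whose expectation is at most $\hat f K Q^n(0)/\sqrt{n}$, hence of order $O(1) = o(\sqrt{n})$, by the uniform tail bound on $\hat F_{d,i}$ and the fact that $Q^n(0) \le K\sqrt{n}$ on $E^n_K$.

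The key piece is (d). For either $\alpha$, any $i$ contributing to $R^n_{\alpha,\infty}(s)-R^n_\alpha(s)$ must satisfy $d_i \le W^n_\alpha(t^n_i-) \le K/\sqrt{n}$ together with $d_i > s - t^n_i$ (when $\alpha = S$) or $W^n_\alpha(t^n_i-) > s - t^n_i$ (when $\alpha = T$); in either case $t^n_i > s - K/\sqrt{n}$. Hence (d) is bounded above by $\#\{i : t^n_i \in (s-K/\sqrt{n}, s],\ d_i \le K/\sqrt{n}\}$. Given $\varepsilon > 0$, Proposition \ref{prop:boundedBalkingAbandonment_shortInterval} supplies a $\delta > 0$ such that, with probability at least $1-\eta$ for all large $n$, the number of arrivals in any subinterval of $[0,t]$ of length $\delta$ with $d_i \le K/\sqrt{n}$ is at most $\varepsilon\sqrt{n}$; since $K/\sqrt{n} < \delta$ for large $n$, the same bound controls (d) uniformly in $s$. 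The main obstacle will be the algebraic bookkeeping that produces the identity above: one must carefully match $\mu^n T^n_\alpha(s)$ against $\hat S^n(s) + S^n_\alpha((T^n_\alpha(s)-W^n(0))^+)$ so that no $O(\sqrt{n})$-scale terms survive the LLN reductions; once the identity is in place, the individual pieces all yield to the lemmas and propositions already established.
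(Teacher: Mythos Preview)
Your algebraic decomposition into pieces (a)--(e) is correct, and your treatments of (a), (c), and (d) are fine. The gap is in your handling of (b) and (e): neither is $o(\sqrt{n})$ on its own. An invariance principle (functional CLT) for the i.i.d.\ service times tells you that $S^n_\alpha(u)-\mu^n u$ is \emph{exactly} of order $\sqrt{n}$ for $u$ of order $1$ (it converges to a Brownian motion after dividing by $\sqrt{n}$), so the second half of (b) is $O(\sqrt{n})$, not $o(\sqrt{n})$. Likewise, (e) is a martingale sum $\sum_{i\le A^n(s)}(1-v_i)\,1(\cdot)$ over $O(n)$ terms with variance of order $1$ each, hence it too is genuinely $O(\sqrt{n})$. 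Invoking an ``invariance principle'' to conclude these pieces vanish at scale $\sqrt{n}$ is the wrong direction: the invariance principle is precisely what says they do \emph{not} vanish at that scale.

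What is true is that (b) and (e) \emph{together} collapse to $o(\sqrt{n})$: after combining, the sum over all $O(n)$ arrivals telescopes down to a centered sum over only the $O(\sqrt{n})$ jobs currently in queue that will be served (plus a residual-service boundary term handled by Lemma~\ref{lem:boundedMaxService}), and a FLLN over $O(\sqrt{n})$ terms does give $o(\sqrt{n})$. You would need to carry out this cancellation explicitly. The paper avoids the issue entirely by a different route: it introduces auxiliary processes $\breve Q^n_\alpha(s)$ (the number of jobs, ignoring balking and abandonment, since the one in service) and $\breve W^n_\alpha(s)$ (the sum of their service times). Because these involve only the $O(\sqrt{n})$ jobs currently relevant, the FLLN immediately gives $|\breve Q^n_\alpha-\mu^n\breve W^n_\alpha|=o(\sqrt{n})$; then $\breve Q^n_\alpha-Q^n_\alpha$ and $\breve W^n_\alpha-W^n_\alpha$ are controlled via Propositions~\ref{prop:boundedBalkingAbandonment_shortInterval} and~\ref{prop:boundedLostWorkload_shortInterval}, and the triangle inequality finishes. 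This local comparison is what your global bookkeeping is missing.
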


Next, we  establish that the sequences $\{\tilde{Q}^n_T, n \ge 1\}$ and $\{\tilde{Q}^n_S, n \ge 1\}$  
	converge to the same limit, if anything at all, as do $\{\tilde{W}^n_T, n \ge 1\}$ and $\{\tilde{W}^n_S, n \ge 1\}$.
\begin{theorem} {Asymptotic coupling.} \label{thm:asymptoticCoupling}
	Under the assumptions of Theorem \ref{thm:main}, 
	for any  $\varepsilon, \eta,t > 0$, 
	\begin{equation} \label{eq:thm:asymptoticCoupling1}
		\limsup_{n \to \infty} 
		\Prob \left(
			\sup_{s \in [0, t]} 
				\left| \tilde{Q}^n_S(s) - \tilde{Q}^n_T(s) \right| 
			> \varepsilon
		\right)
		< \eta
	\end{equation}
	and
	\begin{equation} \label{eq:thm:asymptoticCoupling2}
		\limsup_{n \to \infty} 
		\Prob \left(
			\sup_{s \in [0, t]} 
				\left| \tilde{W}^n_S(s) - \tilde{W}^n_T(s) \right| 
			> \varepsilon
		\right)
		< \eta.
	\end{equation}
\end{theorem}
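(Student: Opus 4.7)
The plan is to exploit the Lipschitz reflection representation \eqref{eq:queueIdle} to reduce the coupling of $\tilde{Q}^n_S$ and $\tilde{Q}^n_T$ to a bound on the driving processes $\tilde{X}^n_\alpha$. Since $(\tilde{Q}^n_\alpha, \tilde{Y}^n_\alpha) = (\Phi_\theta, \Psi_\theta)(\tilde{Q}^n(0) + \tilde{X}^n_\alpha)$ with a common initial condition, the Lipschitz continuity of $(\Phi_\theta, \Psi_\theta)$ on $\bD(\Rp, \R)$ gives a constant $C$ with $\sup_{s \le t} |\tilde{Q}^n_S(s) - \tilde{Q}^n_T(s)| + \sup_{s \le t} |\tilde{Y}^n_S(s) - \tilde{Y}^n_T(s)| \le C \sup_{s \le t} |\tilde{X}^n_S(s) - \tilde{X}^n_T(s)|$, reducing \eqref{eq:thm:asymptoticCoupling1} to an $o_P(1)$ estimate on the driving processes. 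The workload claim \eqref{eq:thm:asymptoticCoupling2} then follows from the triangle bound $\mu|\tilde{W}^n_S - \tilde{W}^n_T| \le |\mu \tilde{W}^n_S - \tilde{Q}^n_S| + |\mu \tilde{W}^n_T - \tilde{Q}^n_T| + |\tilde{Q}^n_S - \tilde{Q}^n_T|$ combined with state space collapse (Proposition \ref{prop:queueWorkload}).

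Decomposing $\tilde{X}^n_S - \tilde{X}^n_T$ from \eqref{eq:mod:X^tilde}, the common arrival noise $\tilde{A}^n$ and drift $(\lambda^n - \mu^n)t/\sqrt{n}$ cancel, and Proposition \ref{prop:centeredBalkingReneging} drives the centered processes $\tilde{M}^n_{b,\alpha}$ and $\tilde{M}^n_{d,\alpha}$ to zero uniformly for each $\alpha$. Three contributions remain. For the $\tilde{\delta}$ difference, Taylor expansion of $F_b$ and $F_d$ at zero (using Lemma \ref{lem:theta2}), together with state space collapse and the asymptotic boundedness of Lemma \ref{lem:bounded}, yields $|\tilde{\delta}^n_S(t) - \tilde{\delta}^n_T(t)| \le \theta \int_0^t |\tilde{Q}^n_S(s) - \tilde{Q}^n_T(s)|\, ds + o_P(1)$. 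For the $\tilde{\epsilon}$ difference, the key pathwise observation is that a customer who has reneged but whose ticket has not yet been reached sits in the ticket queue for a window of length at most $W^n_T = O_P(1/\sqrt{n})$; within any such window only $O_P(\sqrt{n})$ arrivals occur, of which $O_P(1/\sqrt{n})$ would renege by Taylor expansion of $F_d$ and Proposition \ref{prop:boundedBalkingAbandonment_shortInterval}, so the stock of undetected reneges contributes $O_P(1/\sqrt{n}) = o_P(1)$ after diffusion scaling; the initial-job contribution to $\tilde{\epsilon}^n_\alpha$ is controlled by the uniform bound on $\hat{F}_{d,i}$. For the centered service-count difference $\tilde{S}^n_S(T^n_S(\cdot)) - \tilde{S}^n_T(T^n_T(\cdot))$, the filtered service sequences $\{v^n_S(i)\}$ and $\{v^n_T(i)\}$ agree except at $O_P(\sqrt{n})$ indices, and Lemma \ref{lem:boundedMaxService} combined with $|T^n_S - T^n_T| = O_P(1/\sqrt{n})$ yields $o_P(1)$ on compacts.

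Collecting these estimates produces an integral inequality of the form $\sup_{s \le t}|\tilde{Q}^n_S(s) - \tilde{Q}^n_T(s)| \le \eta^n_t + \theta C \int_0^t \sup_{r \le s}|\tilde{Q}^n_S(r) - \tilde{Q}^n_T(r)|\, ds$ with $\eta^n_t = o_P(1)$, and Gronwall's inequality closes the loop. The main obstacle is the uniform (in $s \le t$) control of the $\tilde{\epsilon}$ difference, since the in-flight-abandonment stock argument is naturally pointwise; making it uniform requires combining the short-interval arrival bound of Proposition \ref{prop:boundedBalkingAbandonment_shortInterval} with the uniform workload bound of Lemma \ref{lem:bounded} and a careful discretization of $[0, t]$ into overlapping windows of length $O(1/\sqrt{n})$. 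A secondary delicate point is the coupling of the two filtered service sequences, which requires tracking the index mismatch pathwise rather than treating the two centered walks as independent.
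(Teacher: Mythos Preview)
Your route is genuinely different from the paper's, and the central difficulty you flag at the end is not the real obstacle; the real obstacle is earlier, in the $\tilde{S}^n_\alpha$ term, and your sketch for it does not go through.

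The paper does \emph{not} pass through the Lipschitz representation \eqref{eq:queueIdle}. It proves the workload coupling \eqref{eq:thm:asymptoticCoupling2} first by a direct pathwise monotonicity argument, and then derives \eqref{eq:thm:asymptoticCoupling1} from state space collapse. The idea is: jumps of $\tilde{W}^n_T$ are uniformly $o_P(1)$ by Lemma~\ref{lem:boundedMaxService}, so before the gap $\tilde{W}^n_T-\tilde{W}^n_S$ can exceed $\varepsilon$ it must sit in $(3\varepsilon/4,\varepsilon)$. While the gap is positive, any arriving job that abandons $S$ also abandons $T$, so the gap can increase only if some job balks at $S$ but not at $T$; that requires $\tilde{Q}^n_S>\tilde{Q}^n_T$ at the arrival. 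But by Proposition~\ref{prop:queueWorkload} a gap $\tilde{W}^n_T-\tilde{W}^n_S\ge\varepsilon/2$ forces $\tilde{Q}^n_T>\tilde{Q}^n_S$ with high probability, a contradiction. This uses only Lemmas~\ref{lem:boundedArrivals}, \ref{lem:boundedMaxService} and Proposition~\ref{prop:queueWorkload}; no Gronwall, no control of $\tilde{S}^n_\alpha$ or $\tilde{\epsilon}^n_\alpha$.

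The gap in your argument is the claim that $\tilde{S}^n_S(T^n_S(\cdot))-\tilde{S}^n_T(T^n_T(\cdot))=o_P(1)$. Agreeing except at $O_P(\sqrt{n})$ indices does \emph{not} give this. If the filtered sequences $\{v^n_S(i)\}$ and $\{v^n_T(i)\}$ differ at $m=O_P(\sqrt{n})$ indices, then every insertion or deletion shifts all subsequent terms, and at a common effort level $s$ the counts $S^n_S(s)$ and $S^n_T(s)$ can differ by the number of mismatched jobs processed so far, which is $O_P(\sqrt{n})$; after division by $\sqrt{n}$ this is only $O_P(1)$. Lemma~\ref{lem:boundedMaxService} bounds the \emph{work} of the mismatched jobs (hence the shift in the time argument), not the shift in the count. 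At best one can show that the number of mismatches is controlled by $\sqrt{n}\,\theta\int_0^t|\tilde{Q}^n_S-\tilde{Q}^n_T|\,ds$ plus a martingale error, which would feed an additional integral term into your Gronwall loop---but that is a substantial argument you have not given, and it is further entangled with $|T^n_S-T^n_T|$, which itself is governed by $|\tilde{Y}^n_S-\tilde{Y}^n_T|\le C\|\tilde{X}^n_S-\tilde{X}^n_T\|$, reintroducing the quantity you are bounding. The $\tilde{\delta}$ and $\tilde{\epsilon}$ pieces you treat can in fact be dispatched directly (each $\tilde{\delta}^n_\alpha,\tilde{\epsilon}^n_\alpha\to0$ by Propositions~\ref{prop:renegingIngnoresBalking} and~\ref{prop:deltaNegligible}, whose proofs do not use Theorem~\ref{thm:asymptoticCoupling}), so no Gronwall is needed for them; the feedback you need for Gronwall, if the approach is to be salvaged, has to come from the service-count term, and that part of your sketch is where the work is missing.
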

\begin{proof}
	We first show that \eqref{eq:thm:asymptoticCoupling2} holds. 
	Then \eqref{eq:thm:asymptoticCoupling1} follows from applying the Triangle Inequality twice 
		and Propositions \ref{prop:queueWorkload} for both $\alpha = S$ and $\alpha = T$.
	
	Fix $\varepsilon, \eta, t > 0$. Removing the absolute value signs yields
	\begin{eqnarray} \nonumber
	\lefteqn{
	\Prob \left(
			\sup_{s \in [0, t]} 
				\left| \tilde{W}^n_S(s) - \tilde{W}^n_T(s) \right| 
			> \varepsilon
		\right) 
	}\\
	&\le& \label{eq:thm:asymptoticCoupling2.1} 
	\Prob \left(
			\sup_{s \in [0, t]} 
				\tilde{W}^n_S(s) - \tilde{W}^n_T(s)
			> \varepsilon
		\right) 
	+
	\Prob \left(
			\sup_{s \in [0, t]} 
				\tilde{W}^n_T(s) - \tilde{W}^n_S(s)
			> \varepsilon
		\right).
	\end{eqnarray}		
	We will show that 
	\begin{equation} \label{eq:thm:asymptoticCoupling2.2}
	\limsup_{n \to \infty} 
	\Prob \left(
			\sup_{s \in [0, t]} 
				\tilde{W}^n_T(s) - \tilde{W}^n_S(s)
			> \varepsilon
		\right) 
	<
	\eta/2
	\end{equation}
	and then 
	\begin{equation} \label{eq:thm:asymptoticCoupling2.3}
	\limsup_{n \to \infty} 
	\Prob \left(
			\sup_{s \in [0, t]} 
				\tilde{W}^n_S(s) - \tilde{W}^n_T(s)
			> \varepsilon
		\right) 
	<
	\eta/2
	\end{equation}
	will follow by symmetry.
	
	There are two steps to  demonstrating that \eqref{eq:thm:asymptoticCoupling2.2} holds.
		First we restrict the amount by which the gap between $\tilde W_T$ and $ \tilde{W}_S$ can grow at any instant.
	Then we show that once the gap is of a certain size -- one that is smaller than $\epsilon$ -- the gap 
		necessarily must shrink.
	To this end, introduce the notation for any $s_0 \le  t$:
	$$
		\tau^n(s_0) = \inf \{s \in [s_0, t]: \tilde{W}^n_T(s) - \tilde{W}^n_S(s) \ge \varepsilon\}
	$$
	and
	$$
		\gamma^n(s_0) = \inf \{s \in [s_0, t]: \tilde{W}^n_T(s) - \tilde{W}^n_S(s) \le \varepsilon/2\}.
	$$
	where either stopping time is equal to $t$ if the corresponding infimum is taken over an empty set. 
	Suppose $\tilde{W}^n_T$ is greater than $\tilde{W}^n_S$.
	The depletion of the former is always as fast as the that of the latter since the servers work at the same rate.	
	Therefore, the gap between $\tilde{W}^n_T$ and $\tilde{W}^n_S$ can only increase due to
		jumps in $\tilde{W}^n_T$.
	If jumps in $\tilde{W}^n_T$ are all strictly less than $\varepsilon/4$, then before the gap 
		can exceed $\varepsilon$, it must first assume some value in $(3\varepsilon/4,\varepsilon)$.
	Then, once in this interval, the process must hit $\varepsilon$ before falling below $\varepsilon/2$.
	Otherwise, the gap must again assume a value in  $(3\varepsilon/4,\varepsilon)$ before it reaches $\varepsilon$.
	Hence, we have that 
	\begin{eqnarray}  \label{eq:thm:asymptoticCoupling2.2.0}
		\Prob \left(
			\sup_{s \in [0, t]} 
				\tilde{W}^n_T(s) - \tilde{W}^n_S(s)
			> \varepsilon
		\right) 
	&\le&
		\Prob \left(
			\sup_{s \in [0, t]} 
				\tilde{W}^n_T(s) - \tilde{W}^n_T(s-)
			\ge \varepsilon / 4
		\right)  \\ \nonumber
	&&+ \, 
		\Prob \left( \exists s_0 \le t \; \mbox{ s.t. } 
				\tilde{W}^n_T(s_0) - \tilde{W}^n_S(s_0) \in \left(\frac{3 \varepsilon}{4},\varepsilon\right) \mbox{ and }
				\tau^n(s_0) < \gamma^n(s_0)
		\right)  
	\end{eqnarray}	
	For the first term on the right hand side, a jump in the ticket queue workload 
		must be due to a large service time associated with an arriving job.
	By Lemmas \ref{lem:boundedArrivals} and \ref{lem:boundedMaxService},
	\begin{equation}  \label{eq:thm:asymptoticCoupling2.2.1}
		\Prob \left(
			\sup_{s \in [0, t]} 
				\tilde{W}^n_T(s) - \tilde{W}^n_T(s-)
			\ge \varepsilon / 4
		\right) 
		\le
		\Prob \left( A^n(t) > 2 \mu n t \right) 
		+ \,
		\Prob \left( \sup_{i \le 2 \mu n t} v^n(i) \ge \varepsilon/4 \right)
		< \frac{\eta}{2}.
	\end{equation}
	As for the second term on the right hand side,  $\tilde{W}^n_T$ is greater than $\tilde{W}^n_S$ 
		throughout the interval $[s_0, \min(\tau^n(s_0), \gamma^n(s_0)]$.
	It follows then that any new job that arrives to both queues during that interval and 
		eventually abandons the standard queue 
		must also abandon the ticket queue. 
	So for the gap between the processes to increase during this interval, it must be due to jobs that 
		balk at the standard queue but not at the ticket queue;
		this is possible only if the ticket queue is smaller than the standard queue during this interval.
	It follows that
	\begin{eqnarray*}  
	\lefteqn{
		\Prob \left( \exists s_0 \le t \; \mbox{ s.t. } 
				\tilde{W}^n_T(s_0) - \tilde{W}^n_S(s_0) \in \left(\frac{3 \varepsilon}{4},\epsilon\right) \mbox{ and }
				\tau^n(s_0) < \gamma^n(s_0)
		\right)  
	} \\ \nonumber
	&\le&
		\Prob \left( \exists s \le t \; \mbox{ s.t. } 
				\tilde{W}^n_T(s) - \tilde{W}^n_S(s) \ge  \frac{ \varepsilon}{2} \mbox{ and }
				\tilde{Q}^n_T(s) - \tilde{Q}^n_S(s) < 0
		\right)  	\\  
	&\le& 
		\Prob \left( 
			\sup_{s \in [0,t]} 
			\left |  \frac{\tilde{Q}^n_T(s)}{\mu} - \tilde{W}^n_T(s) \right|
			>  \frac{\varepsilon}{4} 
		\right)
		+
		\Prob \left( 
			\sup_{s \in [0,t]} 
			\left |  \frac{\tilde{Q}^n_S(s)}{\mu} - \tilde{W}^n_S(s) \right|
			>  \frac{\varepsilon}{4} 
		\right),
	\end{eqnarray*}		
	where the second inequality is a consequence of the Triangle Inequality.
	Applying Proposition \ref{prop:queueWorkload} twice yields 
	\begin{equation} \label{eq:thm:asymptoticCoupling2.2.2}
		\Prob \left( \exists s_0 \le t \; \mbox{ s.t. } 
				\tilde{W}^n_T(s_0) - \tilde{W}^n_S(s_0) \in \left(\frac{3 \varepsilon}{4},\epsilon\right) \mbox{ and }
				\tau^n(s_0) < \gamma^n(s_0)
		\right)  
		< 
		\frac{\eta}{4}.
	\end{equation}
	Hence, \eqref{eq:thm:asymptoticCoupling2.2} follows from  
		\eqref{eq:thm:asymptoticCoupling2.2.0}--\eqref{eq:thm:asymptoticCoupling2.2.2};
		 \eqref{eq:thm:asymptoticCoupling2} follows from  
		 \eqref{eq:thm:asymptoticCoupling2.1}--\eqref{eq:thm:asymptoticCoupling2.3}; and
		 \eqref{eq:thm:asymptoticCoupling1} follows from Proposition \ref{prop:queueWorkload},  		
		 	\eqref{eq:thm:asymptoticCoupling2}, and the Triangle Inequality. 
\end{proof}
\noindent Theorem \ref{thm:asymptoticCoupling} allows us to focus all of our efforts in proving that one of these sequences -- say  $\{\tilde{Q}^n_T, n \ge 1\}$ -- converges because the other sequence is brought along with it.

The service allocation process $T^n$ converges to the identity function.
\begin{proposition}{Convergence of the allocation process.} \label{prop:allocationLimit}
	Under the assumptions of Theorem \ref{thm:main}, for each $\alpha \in \{S,T\}$, and 
	any $\varepsilon, \eta, t > 0$,
	$$
		\limsup_{n \to \infty}
		 \Prob \left(
		 	\sup_{s \in [0,t]}
			\left| 
				T^n_\alpha(s)
				-
				s
			\right| 
			> \varepsilon
		\right)
		< \eta.
	$$
\end{proposition}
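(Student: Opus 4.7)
I would begin by reducing the claim to showing $I^n_\alpha(t) := t - T^n_\alpha(t) \to 0$ in probability for fixed $t$. Since $T^n_\alpha$ is non-decreasing and $T^n_\alpha(s) \le s$ by definition, $|T^n_\alpha(s) - s| = I^n_\alpha(s)$; and because $I^n_\alpha$ is itself non-decreasing, $\sup_{s \in [0,t]} |T^n_\alpha(s) - s| = I^n_\alpha(t)$. So the proposition is equivalent to $\Prob(I^n_\alpha(t) > \varepsilon) \to 0$.

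The key identity comes from rearranging the workload equation \eqref{eq:mod:workload^n}: solving for $T^n_\alpha(t)$, subtracting from $t$, and adding and subtracting $\sum_{i=1}^{A^n(t)} v_i/\mu^n$ yields
\begin{equation*}
  I^n_\alpha(t)
  = \left(t - \sum_{i=1}^{A^n(t)} v_i/\mu^n\right)
   + W^n_\alpha(t)
   - W^n(0)
   + L^n_\alpha(t),
\end{equation*}
where
\begin{equation*}
  L^n_\alpha(t) =
   \sum_{i=1}^{A^n(t)} (v_i/\mu^n) \left[
     1 - 1(b_i > Q^n_\alpha(t_i-)/\mu^n)\cdot 1(d_i > W^n_\alpha(t_i-))
   \right]
\end{equation*}
is the cumulative server effort ``lost'' because arriving jobs balk or renege. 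Three of the four terms are easy to push to zero: Lemma \ref{lem:uniformPotentialWork} drives the first term to $0$ uniformly in probability; the hypothesis \eqref{eq:thm:main} gives $W^n(0) = \tilde W^n(0)/\sqrt{n} \to 0$; and Lemma \ref{lem:bounded} gives $W^n_\alpha(t) = \tilde W^n_\alpha(t)/\sqrt n \le K/\sqrt n$ with arbitrarily high probability, hence $W^n_\alpha(t) \to 0$.

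The heart of the argument, and the main obstacle, is controlling $L^n_\alpha(t)$. On the high-probability event of Lemma \ref{lem:bounded} that $\sup_{s \le t}\max(\tilde Q^n_\alpha(s), \tilde W^n_\alpha(s)) \le K$, both $Q^n_\alpha(t_i-)/\mu^n$ and $W^n_\alpha(t_i-)$ are uniformly bounded by $K/\sqrt n$ for $t_i \le t$, so the union bound $1 - 1(b_i>\cdot)1(d_i>\cdot) \le 1(b_i \le \cdot) + 1(d_i \le \cdot)$ yields
\begin{equation*}
  L^n_\alpha(t) \le \sum_{i=1}^{A^n(t)} (v_i/\mu^n)
  \left[1(b_i \le K/\sqrt n) + 1(d_i \le K/\sqrt n)\right].
\end{equation*}
Since $A^n(t)$ depends only on $\{u_j\}$, which is independent of $\{v_i, b_i, d_i\}$, a Wald/conditioning calculation gives the mean bound $(E[A^n(t)]/\mu^n) \cdot (F_b(K/\sqrt n) + F_d(K/\sqrt n))$, which by \eqref{eq:limiting rates} and Lemma \ref{lem:theta} is of order $K/\sqrt n \to 0$. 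A Markov inequality then forces $L^n_\alpha(t) \to 0$ in probability. (Alternatively, one can partition $[0,t]$ into $\lceil t/\delta\rceil$ subintervals and invoke Proposition \ref{prop:boundedLostWorkload_shortInterval} on each piece.) The delicate point is that the indicators in $L^n_\alpha$ reference the processes $Q^n_\alpha$ and $W^n_\alpha$ themselves; the asymptotic boundedness from Lemma \ref{lem:bounded} is what allows us to decouple the dependence by replacing the random thresholds with the deterministic $K/\sqrt n$, at the cost of passing to an upper bound.
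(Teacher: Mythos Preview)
Your proof is correct and follows essentially the same route as the paper: both rearrange the workload equation \eqref{eq:mod:workload^n} to bound $s - T^n_\alpha(s)$ by the net potential-work discrepancy (handled by Lemma \ref{lem:uniformPotentialWork}), the workload (handled by Lemma \ref{lem:bounded}), and the cumulative lost work $L^n_\alpha(t)$, the latter controlled after replacing the random thresholds $Q^n_\alpha(t_i-)/\mu^n$ and $W^n_\alpha(t_i-)$ by the deterministic $K/\sqrt n$ via Lemma \ref{lem:bounded}. Your explicit reduction to a single time point via the monotonicity of $I^n_\alpha$ and your direct Wald--Markov bound on $L^n_\alpha(t)$ are a mild streamlining of the paper's version, which instead caps $A^n(t)$ deterministically through Lemma \ref{lem:boundedArrivals} and then centers the indicators before bounding each piece.
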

A further implication of this result is that the sequence of idle time processes converges to zero.

\subsection{Simplifying the reneging process}

Reneging can happen only if the associated job actually joins the queue.
This complication makes for involved expressions for the reneging processes in \eqref{eq:mod:renegingT^n} and \eqref{eq:mod:renegingS^n}.
Furthermore, notice that the expressions include both the queue length process as well as the workload process.
The process $\tilde{\epsilon}^n_\alpha$ allows one to replace the workload process with the queue length process, 
	to ignore whether or not the jobs have balked when considering whether they will abandon, 
	and to ignore the timing of when reneging is detected by the system.
The following proposition justifies this approximation.

For each $n \ge 1$ and $\alpha$, we define the processes $R^{0,n}_\alpha = \{R^{0,n}_\alpha(t), t \ge 0\}$, 
	$R^{1,n}_\alpha = \{R^{1,n}_\alpha(t), t \ge 0\}$, and
	$R^{2,n}_\alpha = \{R^{2,n}_\alpha(t), t \ge 0\}$, where
\begin{equation} \label{eq:R0}
	R^{0,n}_\alpha(t) 
	=
	\sum^{A^n(t)}_{i=1} 1(d_i \le Q^n_\alpha(t^n_i-)/ \mu^n),
\end{equation}
\begin{equation} \label{eq:R1}
	R^{1,n}_\alpha(t) 
	=
	\sum^{A^n(t)}_{i=1} 1(d_i \le W^n_\alpha(t^n_i-) ) 
\end{equation}
and
\begin{equation} \label{eq:R2}
	R^{2,n}_\alpha(t) 
	=
	\sum^{A^n(t)}_{i=1} 1(b_i > Q^n_\alpha(t^n_i-)/\mu^n ) \cdot 1(d_i \le W^n_\alpha(t^n_i-) ).
\end{equation}
Working backwards, notice that $R^{2,n}_\alpha$ is the fictitious abandonment process where the abandonment 
	takes place upon arrival.
Next, $R^{1,n}_\alpha$ is the fictitious abandonment process whereby abandonment happens upon arrival and
	jobs may abandon even if they have already balked.
Finally, $R^{0,n}_\alpha$ is the process by which abandonment happens upon arrival, is independent of the balking process,
	and is based on the weighted queue length upon arrival instead of the workload upon arrival.
The diffusion scaled analogs have the form
$\tilde R^{k,n}_\alpha = \{\tilde R^{k,n}_\alpha(t), t \ge 0\}$, where $\tilde R^{k,n}_\alpha(t) = (1/\sqrt{n}) R^{k,n}_\alpha(t)$ 
	for each $k=0,1,2$ and $\alpha \in \{S,T\}$.
Ultimately, we would like to replace $R^n_\alpha$ with $R^{0,n}_\alpha$; see Proposition \ref{prop:renegingIngnoresBalking}.
The following three propositions arrive at that conclusion progressively.

The first result verifies that abandonment may be treated as taking place upon arrival.
\begin{proposition} \label{prop:R2R}
	For any $\epsilon, \eta$ and $t >0$,
 \begin{eqnarray*}
\limsup_{n \to \infty} \mathbb{P}\left(
	\sup_{s \in [0,t]} \left(
		\tilde{R}^{2,n}_\alpha(s) - \tilde{R}^{n}_\alpha(s) 
	\right)
	> \epsilon \right) 
	< \eta.
\end{eqnarray*}
\end{proposition}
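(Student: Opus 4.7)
The plan is to bound $R^{2,n}_\alpha(s) - R^n_\alpha(s)$ pathwise by a count of small-deadline arrivals in a short backward-looking window ending at $s$, and then to invoke the short-interval estimate of Proposition~\ref{prop:boundedBalkingAbandonment_shortInterval} together with the workload bound of Lemma~\ref{lem:bounded}.

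The structural first step is to recognize that $R^{2,n}_\alpha$ and $R^n_\alpha$ have the same integrand except for a ``timing-of-detection'' factor: for $\alpha = T$ the extra factor is $1(W^n_T(t^n_i-) \le s - t^n_i)$, and for $\alpha = S$ it is $1(d_i \le s - t^n_i)$. Subtracting in either case, the difference is a sum of indicators each of which vanishes unless both $d_i \le W^n_\alpha(t^n_i-)$ and $s - t^n_i < W^n_\alpha(t^n_i-)$. Lemma~\ref{lem:bounded} supplies a $K > 0$ such that $\limsup_n \Prob(\sup_{u \le t} \tilde{W}^n_\alpha(u) > K) < \eta/2$, and on its complement these two conditions together force $d_i \le K/\sqrt{n}$ and $t^n_i > s - K/\sqrt{n}$, yielding
\[
R^{2,n}_\alpha(s) - R^n_\alpha(s) \;\le\; \sum_{i:\, t^n_i \in (s - K/\sqrt{n},\, s]} 1(d_i \le K/\sqrt{n}), \qquad s \in [0,t].
\]

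With $K$ fixed, I then apply Proposition~\ref{prop:boundedBalkingAbandonment_shortInterval} with tolerances $\epsilon$ and $\eta/2$ to produce a $\delta > 0$ such that the count of arrivals $i \in (A^n(s), A^n(s + \delta)]$ with $d_i \le K/\sqrt{n}$ is at most $\epsilon \sqrt{n}$ uniformly in $s \in [0,t]$ outside an event of asymptotic probability less than $\eta/2$. For all $n$ large enough that $K/\sqrt{n} < \delta$, the short window $(s - K/\sqrt{n}, s]$ is contained in $(s - \delta, s]$; a shift of the parameter $s' = s - \delta$ matches this to the windows $(s', s' + \delta]$ handled by the proposition, while the left boundary case $s < \delta$ is absorbed into the single initial window $(0, \delta]$, itself among the instances controlled by the proposition. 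Dividing through by $\sqrt{n}$ and taking a union bound over the two low-probability events yields the claim.

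The main obstacle is the careful bookkeeping in the structural reduction: the timing-of-detection factor takes a different form in the two queues, yet both $\alpha = S$ and $\alpha = T$ produce the same constraint $s - t^n_i < W^n_\alpha(t^n_i-)$ and hence the same dominating sum, which is what allows a unified treatment. Once this observation is in place, the rest is a routine combination of Lemma~\ref{lem:bounded} and Proposition~\ref{prop:boundedBalkingAbandonment_shortInterval}, with only a minor technical accommodation for the boundary interval when $s < \delta$.
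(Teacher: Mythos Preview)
Your proposal is correct and follows essentially the same route as the paper's proof: both reduce the difference $R^{2,n}_\alpha - R^n_\alpha$ to a count of arrivals with $d_i \le W^n_\alpha(t^n_i-)$ and $s - t^n_i < W^n_\alpha(t^n_i-)$, replace the workload by the bound $K/\sqrt{n}$ from Lemma~\ref{lem:bounded}, observe that the resulting backward window $(s - K/\sqrt{n}, s]$ is eventually contained in one of length~$\delta$, and then invoke Proposition~\ref{prop:boundedBalkingAbandonment_shortInterval}. Your treatment of the boundary case $s < \delta$ and the unified handling of $\alpha \in \{S,T\}$ are slightly more explicit than the paper's, but the argument is the same.
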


An immediate consequence of the next result is that effectively, no customer could have abandoned
	if their balking random variable was small enough to cause it to balk as well.
That is, the number of jobs that are candidates for both balking and abandonment is asymptotically negligible. 
\begin{proposition} \label{prop:R1R2}
	For any $\epsilon, \eta$ and $t >0$,
 \begin{eqnarray*}
\limsup_{n \to \infty} \mathbb{P}\left(
	\sup_{s \in [0,t]} \left(
		\tilde{R}^{1,n}_\alpha(s) - \tilde{R}^{2,n}_\alpha(s) 
	\right)
	> \epsilon \right) 
	< \eta.
\end{eqnarray*}
\end{proposition}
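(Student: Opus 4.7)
\begin{proofof}{Proposition \ref{prop:R1R2} (plan)}
The plan is to truncate on a high-probability event where the primitives are of their expected diffusion-scale order, then exploit the independence of $\{b_i\}$ and $\{d_i\}$ together with the first-order bounds on $F_b$ and $F_d$ (Lemma \ref{lem:theta}) to show the expectation of the truncated difference, after diffusion scaling, vanishes. A final Markov step closes the argument.

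The first observation is that
$$
R^{1,n}_\alpha(s) - R^{2,n}_\alpha(s)
= \sum_{i=1}^{A^n(s)}
1\bigl(b_i \le Q^n_\alpha(t^n_i-)/\mu^n\bigr) \cdot
1\bigl(d_i \le W^n_\alpha(t^n_i-)\bigr)
$$
is nondecreasing in $s$, so the supremum over $s \in [0,t]$ equals the value at $s=t$. Fix $\varepsilon, \eta, t > 0$. Using Lemma \ref{lem:bounded} and Lemma \ref{lem:boundedArrivals}, choose $K>0$ large enough that for all sufficiently large $n$,
$$
\Prob(E^n_K) \ge 1 - \eta/2,
\quad \text{where} \quad
E^n_K = \Bigl\{\sup_{s \le t} \tilde Q^n_\alpha(s) \le K,\ \sup_{s \le t} \tilde W^n_\alpha(s) \le K,\ A^n(t) \le 2\mu n t\Bigr\}.
$$

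On $E^n_K$, and using $\mu^n/n \to \mu$ so that $\mu^n \ge \mu n/2$ eventually, the bounds $Q^n_\alpha(t^n_i-)/\mu^n \le 2K/(\mu\sqrt{n})$ and $W^n_\alpha(t^n_i-) \le K/\sqrt{n}$ hold for every $i \le A^n(t)$. Hence on $E^n_K$,
$$
R^{1,n}_\alpha(t) - R^{2,n}_\alpha(t)
\le
Z^n := \sum_{i=1}^{\lfloor 2\mu n t\rfloor}
1\bigl(b_i \le 2K/(\mu\sqrt{n})\bigr) \cdot
1\bigl(d_i \le K/\sqrt{n}\bigr).
$$
Here the upper bounds are deterministic, so the indicators involve only the mutually independent sequences $\{b_i\}$ and $\{d_i\}$. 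Independence, together with Lemma \ref{lem:theta} applied to both $F_b$ and $F_d$, gives for large $n$
$$
\Exp[Z^n]
\le
2\mu n t \cdot F_b\!\left(\tfrac{2K}{\mu\sqrt{n}}\right) F_d\!\left(\tfrac{K}{\sqrt{n}}\right)
\le
2\mu n t \cdot (2\theta)\tfrac{2K}{\mu\sqrt{n}} \cdot (2\theta)\tfrac{K}{\sqrt{n}}
= 16\theta^2 K^2 t.
$$
So $\Exp[Z^n/\sqrt{n}] \le 16\theta^2 K^2 t/\sqrt{n} \to 0$, and by Markov's inequality
$
\Prob(Z^n/\sqrt{n} > \varepsilon) < \eta/2
$
for $n$ sufficiently large. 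Combining,
$$
\Prob\Bigl(\sup_{s \le t}(\tilde R^{1,n}_\alpha(s) - \tilde R^{2,n}_\alpha(s)) > \varepsilon\Bigr)
\le \Prob((E^n_K)^c) + \Prob(Z^n/\sqrt{n} > \varepsilon) < \eta,
$$
which yields the claim.

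The main technical point to watch is that $Q^n_\alpha(t^n_i-)$ is random and correlated with earlier $b_j, d_j$, so one cannot directly compute expectations of the indicators $1(b_i \le Q^n_\alpha/\mu^n)1(d_i \le W^n_\alpha)$ by multiplying marginal probabilities. The truncation on $E^n_K$ is what converts the random thresholds into deterministic ones of order $1/\sqrt{n}$, after which the independence of $\{b_i\}$ and $\{d_i\}$ produces the essential factor of $1/n$, one power stronger than the $1/\sqrt{n}$ one needs for diffusion-scale negligibility.
\end{proofof}
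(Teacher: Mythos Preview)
Your proof is correct and follows essentially the same route as the paper: note the difference is nondecreasing so the supremum is attained at $t$, replace the random thresholds $Q^n_\alpha/\mu^n$ and $W^n_\alpha$ by deterministic bounds of order $1/\sqrt{n}$ via Lemmas \ref{lem:boundedArrivals} and \ref{lem:bounded}, and then use independence of $\{b_i\}$ and $\{d_i\}$ to get the product $F_b\cdot F_d = O(1/n)$. The only minor difference is that the paper centers the resulting i.i.d.\ sum and applies Chebyshev, whereas you apply Markov's inequality directly to the nonnegative sum $Z^n$; your version is slightly more economical.
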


Now, we show that abandonment can be based on the weighted queue length as opposed to the workload.
\begin{proposition} \label{prop:R0R1}
	For any $\epsilon, \eta$ and $t >0$,
 \begin{eqnarray*}
\limsup_{n \to \infty} \Prob \left(
	\sup_{s \in [0,t]} \left| 
		\tilde{R}^{0,n}_\alpha(s) - \tilde{R}^{1,n}_\alpha(s) 
	\right| 
	> \epsilon \right) 
	< \eta.
\end{eqnarray*}
\end{proposition}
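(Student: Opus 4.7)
My plan is to exploit the state space collapse (Proposition \ref{prop:queueWorkload}) so that the indicators defining $R^{0,n}_\alpha$ and $R^{1,n}_\alpha$ can disagree only when $d_i$ falls into a random window of length $O(1/\sqrt{n})$, and then to control the number of such arrivals via a predictable conditioning argument. Set
$$c_i = \min(W^n_\alpha(t^n_i-),\, Q^n_\alpha(t^n_i-)/\mu^n), \qquad c'_i = \max(W^n_\alpha(t^n_i-),\, Q^n_\alpha(t^n_i-)/\mu^n).$$
Then pathwise
$$\left|R^{0,n}_\alpha(s) - R^{1,n}_\alpha(s)\right| \;\le\; V^n(s) \;:=\; \sum_{i=1}^{A^n(s)} 1(c_i < d_i \le c'_i),$$
and $V^n$ is nondecreasing, so $\sup_{s \le t}|\tilde R^{0,n}_\alpha(s) - \tilde R^{1,n}_\alpha(s)| \le V^n(t)/\sqrt{n}$. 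It therefore suffices to show $V^n(t)/\sqrt{n}$ is small in probability.

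For parameters $\epsilon', K > 0$, let $E_n$ be the intersection of $\{\sup_{s \le t}|\tilde Q^n_\alpha(s) - \mu \tilde W^n_\alpha(s)| \le \mu \epsilon'\}$ and $\{\sup_{s \le t}(\tilde Q^n_\alpha(s) + \mu \tilde W^n_\alpha(s)) \le K\}$. Choosing $K = K(\eta)$ large, Proposition \ref{prop:queueWorkload} and Lemma \ref{lem:bounded} give $\limsup_n \Prob(E_n^c) < \eta/2$. Absorbing $\mu^n/n \to \mu$ into constants, on $E_n$ for all large $n$ and every $i$ with $t^n_i \le t$ one has both $c_i, c'_i \le K''/\sqrt{n}$ and $c'_i - c_i \le \epsilon''/\sqrt{n}$, where $K''$ and $\epsilon''$ are constants proportional to $K$ and $\epsilon'$. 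Hence on $E_n$,
$$V^n(t) \;\le\; \sum_{i=1}^{A^n(t)} 1\bigl(c_i < d_i \le c_i + \epsilon''/\sqrt{n}\bigr) \cdot 1\bigl(c_i \le K''/\sqrt{n}\bigr) \;=:\; U^n(t).$$

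Now let $\mathcal F_{t^n_i-}$ be the natural filtration just before the $i$-th arrival, so that $c_i$ and $1(t^n_i \le t)$ are $\mathcal F_{t^n_i-}$-measurable while $d_i$ is independent of $\mathcal F_{t^n_i-}$. Conditioning on $\mathcal F_{t^n_i-}$ and using Lemma \ref{lem:theta2} (applied with scale argument $c_i \sqrt{n} \le K''$ and step $\epsilon''$), the conditional expectation of each summand of $U^n(t)$ is at most $2 \theta \epsilon''/\sqrt{n}$; the inequality holds trivially when $c_i > K''/\sqrt{n}$ since then the second indicator vanishes. Summing and applying Fubini yields $\Exp[U^n(t)] \le (2 \theta \epsilon''/\sqrt{n}) \cdot \Exp[A^n(t)] \le 4 \mu t \theta \epsilon'' \sqrt{n}$ for $n$ large. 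Markov's inequality then gives $\Prob(V^n(t)/\sqrt{n} > \epsilon,\, E_n) \le 4 \mu t \theta \epsilon''/\epsilon$, which is below $\eta/2$ once $\epsilon'$ (hence $\epsilon''$) is chosen small. Combined with $\Prob(E_n^c) < \eta/2$, this proves the claim. The main obstacle is the predictable-conditioning step: arranging the filtration $\{\mathcal F_{t^n_i-}\}$ so that the relations between $c_i$, $d_i$, and the arrival index are genuinely predictable, and tracking the small discrepancy between $\mu^n/n$ and $\mu$ so that the constants $K''$ and $\epsilon''$ can be chosen uniformly in $n$.
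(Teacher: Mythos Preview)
Your argument is correct and follows the same skeleton as the paper: reduce via state space collapse to counting arrivals whose deadline $d_i$ falls in a random window of width $O(1/\sqrt n)$ around $W^n_\alpha(t^n_i-)$, truncate on the bounded event, and exploit the conditional independence of $d_i$ together with Lemma~\ref{lem:theta2} to bound the contribution of each arrival by $2\theta\epsilon''/\sqrt n$. The only substantive difference is in how the resulting sum is controlled. The paper centers each indicator by its conditional mean, bounds the compensator deterministically on the good event, and then applies Chebyshev to the residual martingale. You instead take expectations directly and apply Markov's inequality to the uncentered sum $U^n(t)$; this is legitimate because the window width $\epsilon''$ is a free parameter that can be driven to zero via Proposition~\ref{prop:queueWorkload}, so a first-moment bound already yields an arbitrarily small probability. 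Your route is a shade shorter; the paper's martingale split would be needed if the window width were dictated externally rather than chosen, but here either works. The filtration issue you flag is harmless: take $\mathcal G_i=\sigma\bigl((u_j)_{j\le i},(v_j,b_j,d_j)_{j<i},\text{initial data}\bigr)$, so that $t^n_i$, $c_i$, and $1(t^n_i\le t)$ are $\mathcal G_i$-measurable while $d_i$ is independent of $\mathcal G_i$; then Tonelli over $i$ gives $\Exp[U^n(t)]\le (2\theta\epsilon''/\sqrt n)\Exp[A^n(t)]$, and the renewal bound $\Exp[A^n(t)]\le 2\mu n t$ for large $n$ (or a truncation via Lemma~\ref{lem:boundedArrivals}) finishes the estimate.
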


\begin{proposition}{Reneging among initial jobs is negligible.}
\label{prop:noInitialReneging}
	For every $\eta > 0$ and  each $\alpha \in \{S,T\}$, there exists an $L>0$ such that,
		 under the assumptions of Theorem \ref{thm:main},
	$$
		\limsup_{n \to \infty}
		\Prob
		\left(
			\hat{R}^n_\alpha(t) > L
		\right)
		<
		\eta.
	$$
\end{proposition}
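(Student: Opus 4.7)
\begin{proofof}{Proposition \ref{prop:noInitialReneging} (proof proposal).}
The plan is to dominate $\hat{R}^n_\alpha(t)$ by a simpler counting variable that sees no dependence on $\alpha$ or $t$, and then bound that variable directly via Markov's inequality. The starting point is the observation that, for both $\alpha\in\{S,T\}$, the timing factors in \eqref{eq:mod:renegingHatT^n} and \eqref{eq:mod:renegingHatS^n} can be discarded to give
$$
    \hat{R}^n_\alpha(t)
    \le
    \sum_{i=1}^{Q^n(0)} 1(\hat{d}_i \le \hat{w}^n_{i-1}).
$$
Since $\hat{w}^n_{i-1}$ is non-decreasing in $i$, it is bounded above by $\hat{w}^n_{Q^n(0)}=W^n(0)$ for every $i\le Q^n(0)$.

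Next I would exploit the initial-condition assumption \eqref{eq:thm:main}. Because $(\tilde Q^n(0),\tilde W^n(0))$ converges weakly to a finite-valued limit, the pair is tight, so given $\eta>0$ I can pick $K>0$ large enough that for all sufficiently large $n$
$$
    \Prob\bigl(Q^n(0) > K\sqrt{n}\bigr) < \eta/4
    \quad\mbox{and}\quad
    \Prob\bigl(W^n(0) > K/\sqrt{n}\bigr) < \eta/4,
$$
and also so that $K/\sqrt{n}\le h_0$ eventually (so that the uniform residual-deadline bound $\hat{F}_{d,i}(h)\le \hat{f}h$ applies). On the intersection of the two tightness events,
$$
    \hat{R}^n_\alpha(t)
    \;\le\;
    \sum_{i=1}^{\lceil K\sqrt{n}\rceil} 1(\hat{d}_i \le K/\sqrt{n})
    \;=:\; \tilde R^n,
$$
a deterministic-in-structure random variable that no longer involves $\hat{w}^n_{i-1}$, $W^n(0)$, or $Q^n(0)$.

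Since the $\hat{d}_i$ are independent with $\hat{F}_{d,i}(K/\sqrt{n})\le \hat{f}K/\sqrt{n}$, taking expectations yields
$$
    \Exp[\tilde R^n]
    \;\le\;
    \lceil K\sqrt{n}\rceil \cdot \hat f K/\sqrt{n}
    \;\le\;
    2\hat f K^2
$$
for $n$ large. By Markov's inequality, $\Prob(\tilde R^n > L)\le 2\hat f K^2/L$. Choosing $L = 8\hat f K^2/\eta$ makes this at most $\eta/4$. Combining with the two tightness estimates gives
$$
    \Prob(\hat R^n_\alpha(t) > L)
    \;\le\;
    \Prob(Q^n(0)>K\sqrt n)
    + \Prob(W^n(0)>K/\sqrt n)
    + \Prob(\tilde R^n > L)
    \;<\; 3\eta/4,
$$
which is less than $\eta$, as required.

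The only genuine subtlety lies in the fact that $W^n(0)=\hat w^n_{Q^n(0)}$ is itself a function of the very $\hat d_i$'s that drive $\hat R^n_\alpha$, so one cannot simply condition on $W^n(0)$ being small and treat the resulting reneging indicators as independent Bernoullis with parameter $\hat f W^n(0)$. The domination by $\tilde R^n$ above side-steps this coupling, because $\tilde R^n$ is defined using the deterministic threshold $K/\sqrt n$ and a deterministic number of terms $\lceil K\sqrt n\rceil$; the $\hat d_i$ appearing in $\tilde R^n$ are genuinely independent with a common upper bound on their distribution functions. Apart from this, the remaining estimates are routine.
\end{proofof}
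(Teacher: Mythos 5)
Your argument is correct, and it follows the same overall strategy as the paper: drop the timing indicators in \eqref{eq:mod:renegingHatT^n}--\eqref{eq:mod:renegingHatS^n}, use the monotonicity $\hat{w}^n_{i-1}\le W^n(0)$, confine $Q^n(0)$ to $O(\sqrt n)$ and $W^n(0)$ to $O(1/\sqrt n)$ on a high-probability event, and then tail-bound the dominated sum $\sum_{i\le K\sqrt n}1(\hat d_i\le K/\sqrt n)$. Where you genuinely diverge is in the last step, and your route is the more economical of the two. You apply Markov's inequality directly to the mean, which needs only independence of the $\hat d_i$'s together with the uniform bound $\sup_i\hat F_{d,i}(h)\le\hat f h$ near zero. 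The paper instead centers the indicators and applies a Kolmogorov/Chebyshev inequality, which requires a variance computation; to keep that computation clean, the paper first replaces the (independent but not identically distributed) $\hat d_i$ by an i.i.d.\ coupling $\breve d_i$ whose common distribution $\breve F(x)=\hat f x$ dominates all the $\hat F_{d,i}$ near the origin. That extra machinery buys a quantitatively sharper tail decay (order $1/L^2$ rather than $1/L$), but the proposition only asserts a fixed-level tail bound, so nothing is lost by using Markov. Your observation at the end about why the domination by $\tilde R^n$ sidesteps the dependence between $W^n(0)$ and the $\hat d_i$'s is exactly the right thing to flag, and it is implicit in the paper's chain of inequalities as well. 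The only cosmetic difference remaining is that you invoke tightness from \eqref{eq:thm:main} directly whereas the paper cites Lemma~\ref{lem:bounded}; since that lemma is itself a consequence of \eqref{eq:thm:main}, both justifications are valid.
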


\begin{proposition}{Reneging effectively ignores balking and takes place upon arrival.} \label{prop:renegingIngnoresBalking}
	For each $\alpha \in \{S,T\}$, and under the assumptions of Theorem \ref{thm:main},
	$$
		\tilde{\epsilon}^n_\alpha \rightarrow 0
	$$
in probability as $n \to \infty$.
\end{proposition}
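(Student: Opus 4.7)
The plan is straightforward: decompose $\tilde{\epsilon}^n_\alpha$ and dispatch each piece using one of the preceding propositions. Reading the sum $\sum_{i=1}^{A^n(t)} 1(d_i \le Q^n_\alpha(t^n_i-)/\mu^n)$ in \eqref{eq:epslion^n} as $R^{0,n}_\alpha(t)$, we can write
\begin{equation*}
\tilde{\epsilon}^n_\alpha(t)
\;=\; \frac{\hat{R}^n_\alpha(t)}{\sqrt{n}} \;+\; \frac{R^n_\alpha(t) - R^{0,n}_\alpha(t)}{\sqrt{n}}.
\end{equation*}
The first summand is immediately controlled by Proposition \ref{prop:noInitialReneging}: for any $\eta>0$ there is an $L$ with $\limsup_{n\to\infty} \Prob(\hat{R}^n_\alpha(t) > L) < \eta$, and since $\hat{R}^n_\alpha$ is non-decreasing in time, on the complementary event $\sup_{s\le t}\hat{R}^n_\alpha(s)/\sqrt{n} \le L/\sqrt{n} \to 0$.

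For the second summand I would use the telescoping identity
\begin{equation*}
R^n_\alpha - R^{0,n}_\alpha
\;=\; (R^n_\alpha - R^{2,n}_\alpha) + (R^{2,n}_\alpha - R^{1,n}_\alpha) + (R^{1,n}_\alpha - R^{0,n}_\alpha)
\end{equation*}
and the elementary fact $\Prob(\sup|X+Y+Z|>3\varepsilon) \le \Prob(\sup|X|>\varepsilon)+\Prob(\sup|Y|>\varepsilon)+\Prob(\sup|Z|>\varepsilon)$. The one observation that makes the chain go through cleanly is pathwise monotonicity: inspection of \eqref{eq:R1}, \eqref{eq:R2}, \eqref{eq:mod:renegingT^n}, and \eqref{eq:mod:renegingS^n} yields $R^n_\alpha(t) \le R^{2,n}_\alpha(t) \le R^{1,n}_\alpha(t)$ sample-pathwise for each $\alpha$ and each $t$, because the standard-queue process $R^n_S$ carries the extra factor $1(d_i \le t-t_i)$, the ticket-queue process $R^n_T$ carries the extra factor $1(W^n_T(t^n_i-) \le t - t_i)$, and removing the balking restriction in passing from $R^{2,n}_\alpha$ to $R^{1,n}_\alpha$ only adds non-negative summands. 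Consequently the one-sided sup bounds of Propositions \ref{prop:R2R} and \ref{prop:R1R2} are in fact control of the full absolute differences, and Proposition \ref{prop:R0R1} already supplies the two-sided bound for the last term.

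Assembling these observations gives $\sup_{s \le t} |\tilde{\epsilon}^n_\alpha(s)| \to 0$ in probability, which is the desired conclusion. I do not expect any real obstacle: the argument is essentially the triangle inequality and the bookkeeping remark that the pathwise ordering $R^n_\alpha \le R^{2,n}_\alpha \le R^{1,n}_\alpha$ upgrades one-sided $\sup$ estimates to two-sided ones. All the genuine technical work has been done in Propositions \ref{prop:noInitialReneging}, \ref{prop:R2R}, \ref{prop:R1R2}, and \ref{prop:R0R1}.
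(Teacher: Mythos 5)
Your proposal is correct and matches the paper's proof: the paper uses exactly the same telescoping decomposition of $\tilde{\epsilon}^n_\alpha$ into $\hat{R}^n_\alpha/\sqrt{n}$, $\tilde R^n_\alpha - \tilde R^{2,n}_\alpha$, $\tilde R^{2,n}_\alpha - \tilde R^{1,n}_\alpha$, $\tilde R^{1,n}_\alpha - \tilde R^{0,n}_\alpha$ and then cites Propositions \ref{prop:R2R}, \ref{prop:R1R2}, \ref{prop:R0R1}, and \ref{prop:noInitialReneging}. You actually fill in a small step the paper leaves implicit, namely that the pathwise ordering $R^n_\alpha \le R^{2,n}_\alpha \le R^{1,n}_\alpha$ makes the one-sided sup bounds of Propositions \ref{prop:R2R} and \ref{prop:R1R2} equivalent to two-sided ones.
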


\begin{proof}
	Notice that for any $t \ge 0$ and $\alpha \in \{S,T\}$ that 
	$$
		\tilde \epsilon^n_\alpha(t) 
			=  
		\frac{\hat{R}^n_\alpha(t)}{\sqrt{n}}
			+	
		\left( \tilde R^n_\alpha(t) - \tilde R^{2,n}_\alpha(t) \right)
			+
		\left( \tilde R^{2,n}_\alpha(t) - \tilde R^{1,n}_\alpha(t) \right)
			+
		\left( \tilde R^{1,n}_\alpha(t) - \tilde R^{0,n}_\alpha(t) \right).
	$$
	The result is an immediate consequence of Propositions \ref{prop:R2R}, \ref{prop:R1R2},   \ref{prop:R0R1},
		and \ref{prop:noInitialReneging}.
\end{proof}

\subsection{Tightness}
Next, we argue that the scaled processes are tight. 
This result is a key step in proving Proposition \ref{prop:deltaNegligible},
	which in turn is used to extract the restorative drift of the limiting diffusion process.
\begin{proposition}{Tightness of the scaled processes.} \label{prop:tightness}
	The processes $\{\tilde{Q}^n_\alpha, n \ge 1\}$ and $\{\tilde{W}^n_\alpha, n \ge 1\}$ are tight. 
\end{proposition}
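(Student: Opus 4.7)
I will establish tightness of $\{\tilde W^n_\alpha\}$ first and then deduce tightness of $\{\tilde Q^n_\alpha\}$ from the state space collapse in Proposition \ref{prop:queueWorkload}. Define the netput process $V^n_\alpha$ by the right-hand side of \eqref{eq:mod:workload^n} with $-T^n_\alpha(t)$ replaced by $-t$, so that $W^n_\alpha(t) = V^n_\alpha(t) + I^n_\alpha(t)$. Since $I^n_\alpha$ is non-decreasing and grows only while $Q^n_\alpha = 0$ (equivalently, while $W^n_\alpha = 0$), this is the one-sided Skorohod representation: the reflection map is Lipschitz in the uniform norm on $\bD(\Rp,\R)$, so it suffices to prove tightness of $\tilde V^n_\alpha := \sqrt n\, V^n_\alpha$. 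Note that this route sidesteps the circularity in \eqref{eq:queueIdle}, whose drift-like $\tilde\delta^n_\alpha$ term is controlled only \emph{after} tightness is available.

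Decompose $\tilde V^n_\alpha(t) = \sqrt n\, W^n(0) + \tilde N^n(t) - \sqrt n\, L^n_\alpha(t)$, where $\tilde N^n(t) = \sqrt n\bigl[\sum_{i=1}^{A^n(t)} v_i/\mu^n - t\bigr]$ is the centered and scaled potential workload and $L^n_\alpha(t)$ denotes the total potential service work associated with customers who arrive by time $t$ and who either balk or abandon. The term $\sqrt n\, W^n(0)$ is bounded in probability by \eqref{eq:thm:main}, while tightness of $\tilde N^n$ follows at once from Lemma \ref{lem:uniformPotentialWork} (compact containment on $[0,T]$) together with Lemma \ref{lem:shortPotentialWork} (modulus-of-continuity over short intervals).

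The \emph{main obstacle} is controlling $\sqrt n\, L^n_\alpha$. Each summand in $L^n_\alpha$ is bounded above by $(v_i/\mu^n)\cdot\bigl[1(b_i \le Q^n_\alpha(t^n_i-)/\mu^n) + 1(d_i \le W^n_\alpha(t^n_i-))\bigr]$, but Proposition \ref{prop:boundedLostWorkload_shortInterval} applies only with explicit thresholds of order $1/\sqrt n$. Given $\eta > 0$, I first invoke Lemma \ref{lem:bounded} to produce $K$ such that the event
\[
E^n_K = \bigl\{\sup_{s \le T}\tilde Q^n_\alpha(s) \le K\bigr\} \cap \bigl\{\sup_{s \le T}\tilde W^n_\alpha(s) \le K\bigr\}
\]
satisfies $\Prob(E^n_K) \ge 1 - \eta/2$ for all large $n$. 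On $E^n_K$, both $Q^n_\alpha(s)/\mu^n$ and $W^n_\alpha(s)$ are bounded by $K'/\sqrt n$ for an appropriate $K' = K'(K,\mu)$, so each summand in $L^n_\alpha$ restricted to $[0,T]$ is bounded by $(v_i/\mu^n)\cdot\bigl[1(b_i \le K'/\sqrt n) + 1(d_i \le K'/\sqrt n)\bigr]$. Applying Proposition \ref{prop:boundedLostWorkload_shortInterval} with this $K'$ now yields the oscillation estimate $\sup_{0 \le u < v \le T,\, v - u < \delta} \sqrt n\,|L^n_\alpha(v) - L^n_\alpha(u)| \le \epsilon$ with probability at least $1 - \eta/2$; since $L^n_\alpha$ is non-decreasing, compact containment on $[0,T]$ then follows by partitioning $[0,T]$ into $\lceil T/\delta\rceil$ subintervals and summing.

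Combining these estimates establishes tightness of $\tilde V^n_\alpha$; Lipschitz continuity of the Skorohod reflection yields tightness of $\tilde W^n_\alpha$; and Proposition \ref{prop:queueWorkload} upgrades this to tightness of $\tilde Q^n_\alpha$.
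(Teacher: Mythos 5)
Your overall strategy — establish tightness of $\tilde W^n_\alpha$ first via a modulus-of-continuity estimate and then obtain tightness of $\tilde Q^n_\alpha$ from state space collapse (Proposition \ref{prop:queueWorkload}) — matches the paper's proof, and you invoke the same two key estimates: Lemma \ref{lem:shortPotentialWork} for the arriving potential workload and Proposition \ref{prop:boundedLostWorkload_shortInterval} (together with Lemma \ref{lem:bounded} to localize to a set where the queue and workload are $O(\sqrt n)$ and $O(1/\sqrt n)$, respectively) for the balking/abandonment loss. The one genuine difference in presentation is that you package the idle-time control as Lipschitz continuity of the one-sided Skorohod reflection applied to the netput $\tilde V^n_\alpha$, whereas the paper hand-derives exactly this bound through \eqref{eq:tightnessW.2.1}--\eqref{eq:tightnessW.2.2}: the increment of $I^n_\alpha$ over a short interval is dominated by the downward oscillation of the arriving workload. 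These are the same argument; yours is merely more modular.

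There is, however, one genuine gap: you claim that compact containment of $\tilde N^n$ "follows at once from Lemma \ref{lem:uniformPotentialWork}." It does not — Lemma \ref{lem:uniformPotentialWork} controls $\sup_s \left|\sum_{i\le A^n(s)} v^n_i - s\right|$ at the \emph{fluid} scale, i.e., it gives a bound of order $\epsilon$ on an unscaled quantity, and multiplying by $\sqrt n$ destroys the bound. Compact containment of $\tilde N^n$ (or of $\tilde V^n_\alpha$) requires a diffusion-scaled statement, which that lemma does not provide. The paper sidesteps this entirely by taking compact containment of $\tilde W^n_\alpha$ and $\tilde Q^n_\alpha$ directly from Lemma \ref{lem:bounded} and only needing the modulus condition \eqref{eq:tightnessW} as the second half of Billingsley's criterion. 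You can fix your argument the same way: Lemma \ref{lem:bounded} plus the fact that $\tilde V^n_\alpha \le \tilde W^n_\alpha$ and $\tilde V^n_\alpha(0) = \tilde W^n(0)$ is tight, combined with the modulus estimate already in hand, yields compact containment of $\tilde V^n_\alpha$ (fix a single $\epsilon_0$, take the corresponding $\delta_0$, and partition $[0,T]$ into $\lceil T/\delta_0\rceil$ blocks). Alternatively, note that $\tilde N^n$ converges to a Brownian motion by the standard FCLT for random sums (as the paper's remark following Lemma \ref{lem:shortPotentialWork} observes), which implies its tightness — but that fact is not what Lemma \ref{lem:uniformPotentialWork} asserts.
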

\begin{proof}
By Theorem 13.2 of \cite{ref:B}, tightness follows from Lemma  \ref{lem:bounded} 
	and the fact that for any $\epsilon>0$, we have that for either $\alpha \in \{S,T\},$
\begin{equation} \label{eq:tightnessQ}
	\lim_{\delta \to 0}
	\limsup_{n \to \infty}
	\Prob \left(
		\sup_{u,v \in [0,t], v-u<\delta}
		\left|
			\tilde{Q}^n_\alpha(v) - \tilde{Q}^n_\alpha(u)
		\right|
		> \epsilon
	\right) = 0
\end{equation}
and
\begin{equation} \label{eq:tightnessW}
	\lim_{\delta \to 0}
	\limsup_{n \to \infty}
	\Prob \left(
		\sup_{u,v \in [0,t], v-u<\delta}
		\left|
			\tilde{W}^n_\alpha(v) - \tilde{W}^n_\alpha(u)
		\right|
		> \epsilon
	\right)
	= 0.
\end{equation}
We will show that \eqref{eq:tightnessW} holds.
Then \eqref{eq:tightnessQ} follows from Theorem \ref{prop:queueWorkload} and \eqref{eq:tightnessW}.
This will conclude the proof.

Fix $\alpha \in \{S,T\}$ and arbitrary constants $\epsilon, \eta>0$.
We will show that there exists a $\delta_0>0$ such that for any $\delta \le \delta_0$,
\begin{equation} \label{eq:tightnessW.1}
	\Prob \left(
		\sup_{u,v \in [0,t], v-u<\delta}
		\left|
			W^n_\alpha(v) - W^n_\alpha(u)
		\right|
		> \frac{\epsilon}{\sqrt{n}}
	\right)
	<
	\eta
\end{equation}
for sufficiently large $n$.
Changes in the workload are due to the arrival of work and the processing of work. 
That is, for any $v\ge u \ge 0$,
\begin{equation}  \label{eq:tightnessW.2.1}
	W^n_\alpha(v) - W^n_\alpha(u) 
	=
	\sum_{i=A^n(u)+1}^{A^n(v)}
		v^n_i 
		\cdot
		1(b_i > Q^n_\alpha(t^n_i-)/\mu^n)
		\cdot
		1(d_i > W^n_\alpha(t^n_i-))
	-
	\left( T^n_\alpha(v) - T^n_\alpha(u) \right).
\end{equation}
By \eqref{eq:mod:idletime^n}, the change in the service allocation process 
	can be written in terms of the increase in the idle process, 
	which in turn can be bounded by the shortfall in the arrival of workload relative to the potential server effort.
For any $v\ge u \ge 0$,
\begin{eqnarray}  \label{eq:tightnessW.2.2}
	I^n_\alpha(v) - I^n_\alpha(u)
	&=&
	(v-u) - \left( T^n_\alpha(v) - T^n_\alpha(u) \right) \\
	&\le& \nonumber
	\max \left(
		0, - \inf_{s\in [u,v]} \left(
			\sum_{i=A^n(u)+1}^{A^n(s)}
			v^n_i 
			\cdot
			1(b_i > Q^n_\alpha(t^n_i-)/\mu^n)
			\cdot
			1(d_i > W^n_\alpha(t^n_i-))
			- (s-u)
		\right)
	\right).
\end{eqnarray}
It follows from \eqref {eq:tightnessW.2.1} and \eqref {eq:tightnessW.2.2} that for any $0 < \delta \le \delta_0$,
\begin{eqnarray}  \label{eq:tightnessW.2}
	\sup_{u,v \in [0,t], v-u<\delta}
		\left|
			W^n_\alpha(v) - W^n_\alpha(u)
		\right|
	&\le&
	2 \sup_{u,v \in [0,t], v-u<\delta}
		\left|
		\sum_{i=A^n(u)+1}^{A^n(v)}
			v^n_i 
		-
		(v-u)
		\right|  \\ \nonumber
	&&+
	2 \sup_{u \in [0,t]}	
		\sum_{i=A^n(u)+1}^{A^n(u+\delta)}
			v^n_i  \left( 1(b_i \le Q^n_\alpha(t^n_i-)/\mu^n) + 1(d_i \le W^n_\alpha(t^n_i-)) \right) \\ \nonumber
	&\le&
	2 \sup_{u,v \in [0,t], v-u<\delta_0}
		\left|
		\sum_{i=A^n(u)+1}^{A^n(v)}
			v^n_i 
		-
		(v-u)
		\right|  \\ \nonumber
	&&+
	2 \sup_{u \in [0,t]}	
		\sum_{i=A^n(u)+1}^{A^n(u+\delta_0)}
			v^n_i  \left( 1(b_i \le Q^n_\alpha(t^n_i-)/\mu^n) + 1(d_i \le W^n_\alpha(t^n_i-)) \right).
\end{eqnarray}	
Finally,
\begin{eqnarray*}
\lefteqn{
	\Prob \left(
		\sup_{u,v \in [0,t], v-u<\delta}
		\left|
			W^n_\alpha(v) - W^n_\alpha(u)
		\right|
		> \frac{\epsilon}{\sqrt{n}}
	\right)
} && \\ 
	&\le&
	\Prob \left( 
		2 \sup_{u,v \in [0,t], v-u<\delta_0}
		\left|
		\sum_{i=A^n(u)+1}^{A^n(v)}
			v^n_i 
		-
		(v-u)
		\right|
		>
		\frac{\epsilon}{2\sqrt{n}}
	\right) \\
	&&+
	\Prob \left(
		2 \sup_{u \in [0,t]}	
		\sum_{i=A^n(u)+1}^{A^n(u+\delta_0)}
			v^n_i  \left( 1(b_i \le Q^n_\alpha(t^n_i-)/\mu^n) + 1(d_i \le W^n_\alpha(t^n_i-)) \right)
		>
		\frac{\epsilon}{2\sqrt{n}}
	\right) 	\\
	&\le&
	\Prob \left( 
		2 \sup_{u,v \in [0,t], v-u<\delta_0}
		\left|
		\sum_{i=A^n(u)+1}^{A^n(v)}
			v^n_i 
		-
		(v-u)
		\right|
		>
		\frac{\epsilon}{2\sqrt{n}}
	\right) 
	+ 
	\Prob \left( \sup_{s \le t + \delta_0} Q^n_\alpha(s)/\mu^n > K/\sqrt{n} \right) \\
	&&+
	\Prob \left(
		2 \sup_{u \in [0,t]}	
		\sum_{i=A^n(u)+1}^{A^n(u+\delta_0)}
			v^n_i  \left( 1(b_i \le  K/\sqrt{n})+ 1(d_i \le  K/\sqrt{n})  \right)
		>
		\frac{\epsilon}{2\sqrt{n}}
	\right) 	
	+
	\Prob \left( \sup_{s \le t + \delta_0} W^n_\alpha(s) > K/\sqrt{n} \right).	
\end{eqnarray*}
By Lemma \ref{lem:shortPotentialWork},
	Lemma \ref{lem:bounded}, Proposition  \ref{prop:boundedLostWorkload_shortInterval},
	and \eqref{eq:tightnessW.2}, we can choose a $K>0$ and a $\delta_0 >0$ such that \eqref{eq:tightnessW.1}  follows easily.

\end{proof}

\subsection{Convergence to Diffusion Processes}
In this subsection, we pull together the last ingredients for our diffusion process.  
We start with the driving Browning motions from our centered and scaled interarrival and service time processes.
We derive the restorative drift term of our limiting diffusion process from the 
	derivatives of the balking and abandonment distributions evaluated at zero.
Finally, we complete the proof of our main result.

Let $B_a$ and $B_s$ be independent, standard Brownian motions; i.e., $B_a(0) = B_s(0) = 0$ and the processes have zero drift and unitary infinitesimal variance.
The diffusion scaled arrival processes and service completion processes converge to scaled versions of these Brownian motions.
These are standard results -- see, for example, \cite{ref:JR}.
\begin{proposition} \label{prop:convergenceToBMs}
	Under the assumptions of Theorem \ref{thm:main},
		$$
			\tilde{A}^n \Rightarrow \mu \sigma_a B_a
			\quad
			\mbox{ and }
			\quad
			\tilde{S}^n_\alpha \Rightarrow \mu \sigma_s B_s
		$$
	as $n \to \infty$.
\end{proposition}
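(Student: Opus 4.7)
The plan is to reduce both convergence statements to Donsker's FCLT for i.i.d.\ partial sums, combined with a random time-change argument standard in the renewal/heavy-traffic literature. Both $A^n$ and $S^n_\alpha$ are renewal counting processes (on different clocks), so once Donsker is in hand for the underlying i.i.d.\ sequences, standard inversion delivers the centered counting-process FCLTs.

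First I would apply Donsker's theorem to the i.i.d.\ mean-one, variance-$\sigma_a^2$ sequence $\{u_i\}$, obtaining
\begin{equation*}
\frac{1}{\sqrt{n}}\sum_{i=1}^{\lfloor n s \rfloor}(u_i - 1) \;\Rightarrow\; \sigma_a B_a(s)
\end{equation*}
in $\bD(\Rp, \R)$. Next, I would exploit the inversion identity $\sum_{i=1}^{A^n(t)} u_i \le \lambda^n t < \sum_{i=1}^{A^n(t)+1} u_i$, which, after centering by $A^n(t)$ and dividing by $\sqrt{n}$, gives
\begin{equation*}
\tilde A^n(t) \;=\; -\frac{1}{\sqrt{n}}\sum_{i=1}^{A^n(t)}(u_i - 1) \;+\; o_{\Prob}(1).
\end{equation*}
Combined with the fluid limit $\bar A^n \to \mu\, e$ of Lemma \ref{lem:uniformArrivals} and the heavy-traffic condition $\lambda^n/n \to \mu$, the random time-change theorem (e.g.\ Billingsley \cite{ref:B}, Theorem 14.4) allows composition of the Donsker limit at the random index $A^n(t)$, yielding convergence of $\tilde A^n$ to the scaled limit claimed in the proposition.

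For the service-completion process, the argument is identical once two observations are made: (i) as noted directly after \eqref{eq:S}, the filtered sequence $\{v^n_\alpha(i), i \ge 1\}$ is i.i.d.\ with the same mean-one, variance-$\sigma_s^2$ distribution as the original $\{v_i\}$, so Donsker applies verbatim to its partial sums; and (ii) the fluid limit $S^n_\alpha(t)/n \to \mu t$ needed for the inversion follows from the same renewal FLLN applied to these filtered service times. Joint convergence of $(\tilde A^n, \tilde S^n_\alpha)$ to independent Brownian motions $B_a, B_s$ is then immediate from the mutual independence of $\{u_i\}$ and $\{v_i\}$, which descends to the filtered subsequence.

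The only genuinely technical step is justifying the composition at the random time $A^n(t)$ (respectively at the server-effort index driving $S^n_\alpha$), which I would handle by invoking the continuous mapping theorem for the composition map, taking the uniform fluid limit of Lemma \ref{lem:uniformArrivals} as input and exploiting the continuity of the limiting Brownian motion. Beyond this composition step, the argument is entirely routine classical FCLT manipulation; as the paper indicates, a fully detailed version of these arguments can be found in \cite{ref:JR}, which is why I would present the proposition without reproducing the proof in full.
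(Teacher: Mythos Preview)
Your proposal is correct and matches the paper's treatment: the paper gives no proof at all, simply stating ``These are standard results --- see, for example, \cite{ref:JR}'' immediately before the proposition. Your sketch of Donsker plus a random time-change is precisely the standard argument such references contain, and you correctly invoke the paper's observation (after \eqref{eq:S}) that the filtered service-time sequence is i.i.d.\ with the original distribution; you even close by noting the deferral to \cite{ref:JR}, exactly as the paper does.
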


The process $\tilde{\delta}^n_\alpha$ swaps the sum of the abandonment and balking distributions, 
	evaluated at the scaled queue length at the times of arrivals, with a smooth function involving the derivatives of the
	distribution functions evaluated at zero and then multiplied by the scaled queue lengths.
The following proposition justifies this step and is proven in the Appendix.
\begin{proposition} \label{prop:deltaNegligible}
	For each $\alpha \in \{S,T\}$, and under the assumptions of Theorem \ref{thm:main},
	$$
		\tilde{\delta}^n_\alpha \rightarrow 0
	$$
in probability as $n \to \infty$.
\end{proposition}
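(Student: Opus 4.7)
\textbf{Proof proposal for Proposition \ref{prop:deltaNegligible}.}
The plan is to decompose $\tilde{\delta}^n_\alpha(t)$ into two pieces: the initial-service correction
$
(1/\sqrt{n})(\hat{S}^n(t) - \mu^n \min(t,W^n(0)))
$
and the main piece
$$
\Delta^n_\alpha(t)
\;:=\;
\frac{1}{\sqrt{n}} \sum_{i=1}^{A^n(t)}
\bigl(
F_b(\sqrt{n}\,\tilde{Q}^n_\alpha(t^n_i-)/\mu^n)
+
F_d(\sqrt{n}\,\tilde{Q}^n_\alpha(t^n_i-)/\mu^n)
\bigr)
\;-\;
\theta \int_0^t \tilde{Q}^n_\alpha(s)\,ds.
$$
Each piece is shown to vanish uniformly on compact time intervals in probability, and it suffices to argue both separately.

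For the initial-service term, on the event $\{\sup_i \hat v_i / \mu^n \le \epsilon/\sqrt{n}\}$ (which has probability tending to 1 by Lemma \ref{lem:boundedMaxService}) and by Proposition \ref{prop:noInitialReneging}, all but $O(1)$ of the $Q^n(0) = O(\sqrt{n})$ initial jobs are served, using total server effort $\mu^n W^n(0)$. Thus for $t \ge W^n(0)$, one has $\hat{S}^n(t) = Q^n(0) - \hat{R}^n_\alpha(\infty)$ and $\mu^n W^n(0) = Q^n(0) - \hat{R}^n_\alpha(\infty) + o(\sqrt{n})$ by Lemma \ref{lem:serviceTimeFLLN}; both scaled by $1/\sqrt n$ yield the same limit. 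For $t < W^n(0) = O(1/\sqrt{n})$, the term $\hat S^n(t) - \mu^n t$ is a centered renewal count over an interval of length $O(1/\sqrt{n})$, which contributes at most $O(n^{1/4})$ by the central limit theorem, negligible after dividing by $\sqrt{n}$.

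For $\Delta^n_\alpha$, first use Lemma \ref{lem:bounded} to restrict to the event $\{\sup_{s \le t} \tilde{Q}^n_\alpha(s) \le K\}$, on which each argument $\sqrt{n}\,\tilde{Q}^n_\alpha(t^n_i-)/\mu^n$ lies in $[0, K/\mu^n \cdot \sqrt{n}]$; as $n \to \infty$ this interval shrinks to $\{0\}$. Since $F_b,F_d$ are differentiable at zero with $F_b'(0)+F_d'(0)=\theta$, for any $\epsilon>0$ we have $|F_b(x)+F_d(x) - \theta x| \le \epsilon x$ uniformly for $x$ in a sufficiently small neighborhood of zero; thus for large $n$,
$$
\left|
F_b(\sqrt{n}\,\tilde{Q}^n_\alpha(t^n_i-)/\mu^n)
+ F_d(\sqrt{n}\,\tilde{Q}^n_\alpha(t^n_i-)/\mu^n)
- \theta \sqrt{n}\,\tilde{Q}^n_\alpha(t^n_i-)/\mu^n
\right|
\;\le\; \epsilon \cdot \sqrt{n}\,\tilde{Q}^n_\alpha(t^n_i-)/\mu^n,
$$
and summing over $i \le A^n(t)$, multiplying by $1/\sqrt{n}$, and invoking Lemma \ref{lem:boundedArrivals} bounds the Taylor-remainder contribution by $\epsilon \cdot 2\mu t \cdot K/\mu$. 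It remains to show that $(\theta/\mu^n)\sum_{i=1}^{A^n(t)} \tilde{Q}^n_\alpha(t^n_i-)$ is close to $\theta\int_0^t \tilde{Q}^n_\alpha(s)\,ds$, uniformly in $t \in [0,T]$. Write $\tau^n_i = t^n_i \wedge t$ and compare
$
\sum_i \tilde Q^n_\alpha(\tau^n_i-) (\tau^n_{i+1}-\tau^n_i)
$
to $\int_0^t \tilde Q^n_\alpha(s)\,ds$. The interarrival gaps $\tau^n_{i+1}-\tau^n_i$ are each of order $1/\mu^n$, and by Lemma \ref{lem:uniformArrivals} the fluid-scaled arrival process concentrates around $\mu s$. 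Then tightness from Proposition \ref{prop:tightness}, applied to control $|\tilde{Q}^n_\alpha(\tau^n_{i+1}-) - \tilde{Q}^n_\alpha(\tau^n_i-)|$ on a partition of $[0,T]$ into $O(\delta^{-1})$ blocks of length $\delta$, lets us bound the Riemann-sum error by an arbitrarily small constant.

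The main obstacle is the last step: the Riemann sum is driven by a random counting measure $dA^n(s)$ rather than Lebesgue measure, and the integrand $\tilde Q^n_\alpha$ is itself a random c\`adl\`ag process with jumps that do not disappear under diffusion scaling. Handling this requires combining the tightness of $\tilde{Q}^n_\alpha$ (so that the oscillation modulus is small on short intervals) with the functional law of large numbers for arrivals (so that the counting measure is close to $\lambda^n ds$) simultaneously and uniformly in $t$. Splitting $[0,T]$ along a deterministic mesh of size $\delta$, controlling the oscillation of $\tilde Q^n_\alpha$ on each block via Proposition \ref{prop:tightness}, and bounding the number of arrivals per block via Lemma \ref{lem:uniformArrivals} drives the error to zero by first sending $n\to\infty$ and then $\delta\to 0$.
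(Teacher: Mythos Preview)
Your decomposition and strategy match the paper's: split $\tilde{\delta}^n_\alpha$ into the initial-service correction and the main piece, then handle the main piece via (i) a Taylor linearization of $F_b+F_d$ at zero using Lemma~\ref{lem:bounded} to confine the argument to a shrinking neighborhood, and (ii) a Riemann-sum-versus-integral comparison driven by the arrival counting measure. The paper carries one extra bookkeeping step you absorb implicitly, namely replacing the denominator $\mu^n$ by $\mu n$ before linearizing (your Riemann-sum argument with gaps of order $1/\lambda^n$ handles this since $\lambda^n/\mu^n\to 1$, but you should say so explicitly).

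The one genuine methodological difference is step (ii). The paper does not do a direct mesh argument: it invokes tightness (Proposition~\ref{prop:tightness}) to pass to a convergent subsequence, uses the Skorohod representation to upgrade to almost-sure convergence, and then applies an external integration-limit lemma (Lemma~8.3 of Dai--Dai) twice to conclude that both $\int_0^\cdot \tilde Q^{n'}_\alpha(s-)\,d(\bar A^{n'}(s)/\mu)$ and $\int_0^\cdot \tilde Q^{n'}_\alpha(s)\,ds$ converge to the same limit. Your direct $\delta$-mesh argument, controlling the oscillation of $\tilde Q^n_\alpha$ on each block via tightness and the arrival counts via Lemma~\ref{lem:uniformArrivals}, is a valid and more elementary alternative that avoids the subsequence/Skorohod machinery; the paper's route is shorter to write because it outsources the analysis, while yours is self-contained. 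For the initial-service term, the paper simply treats $\hat S^n$ as a renewal process built from the i.i.d.\ service times of the served initial jobs and applies a weak law on intervals of length $O(1/\sqrt{n})$; your argument via counting served jobs and Lemma~\ref{lem:serviceTimeFLLN} reaches the same conclusion but is more circuitous.
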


We conclude with a proof of the main result.
\begin{proofof}{Theorem \ref{thm:main}}
Fix $\alpha \in \{S,T\}$.
Recall the expression for the scaled queue length and idleness process given in \eqref{eq:queueIdle}.
The elements of the process $\tilde{X}^n_\alpha$ (see \eqref{eq:mod:X^tilde})  
	converge to either zero, a drift, or a Brownian motion.
By Lemma \ref{lem:uniformArrivals} and Proposition \ref{prop:centeredBalkingReneging}, we have that
$$
	\tilde{M}^n_{b,\alpha} \circ \bar{A}^n \to 0
	\quad \mbox{and} \quad
	\tilde{M}^n_{d,\alpha} \circ \bar{A}^n \to 0	
$$
	as $n \to \infty$.
Furthermore, Propositions \ref{prop:renegingIngnoresBalking} and \ref{prop:deltaNegligible} inform us that
$$
	\tilde{\epsilon}^n_\alpha \rightarrow 0
	\quad \mbox{and} \quad	
	\tilde{\delta}^n_\alpha \rightarrow 0,
$$
	respectively, as $n \to \infty$.
Proposition \ref{prop:convergenceToBMs} provides the convergence of the scaled and centered arrival and potential departure processes.
Coupled with the service allocation process, we have
$$
	\tilde{A}^n + \tilde{S}^n_\alpha \circ T^n_\alpha 
	\rightarrow
	\mu \sigma_a B_a 
	+
	\mu \sigma_b B_b \
	{\buildrel D \over = } \
	\sigma B.
$$
Finally, \eqref{eq:limiting beta} provides the drift term
$$
	\frac{ \left(\lambda^n-\mu^n \right) }{\sqrt{n}}
	\to
	\beta
$$
as $n \to \infty$.
Hence,
$$	
	\tilde{X}^n_\alpha \Rightarrow \tilde{X}.
$$
From \eqref{eq:thm:main} we have that
$$
	(\tilde{Q}^n(0), \tilde{X}^n_\alpha) 
	\Rightarrow 
	(\tilde{Q}_0, \tilde{X}), \qquad \mbox{as } n \to \infty,
$$
and hence, by the Continuous Mapping Theorem,
$$
	(\tilde{Q}^n_\alpha, \tilde{Y}^n_\alpha) 
	=
	(\Phi_\theta, \Psi_\theta) (\tilde{Q}^n(0), \tilde{X}^n_\alpha)
	\Rightarrow
	(\Phi_\theta, \Psi_\theta) (\tilde{Q}(0), \tilde{X}) = (\tilde{Q},\tilde{Y}), \qquad \mbox{as } n \to \infty,
$$
	where recall $\tilde{Q}(0)$ is equal in distribution to $\tilde{Q}_0$.
	
	As $\alpha$ was chosen arbitrarily, this limit holds for both the ticket queue- and standard queue-related processes;
		both scaled queue length processes converge to the same limit in distribution.
	By \eqref{eq:thm:asymptoticCoupling1} of Theorem \ref{thm:asymptoticCoupling}, these processes are coupled:
$$
	( (\tilde{Q}^n_S, \tilde{Y}^n_S), (\tilde{Q}^n_T, \tilde{Y}^n_T)) 
	\Rightarrow
	((\tilde{Q},\tilde{Y}), (\tilde{Q},\tilde{Y})), \qquad \mbox{as } n \to \infty,
$$
	Finally, Proposition \ref{prop:queueWorkload} demonstrates that asymptotically the scaled workload and scaled queue lengths are  
		scalar multiples of each other and thus converge together to scalar multiples of the same process. Hence \eqref{eq:thm:main2} holds.
	This concludes the proof.
\end{proofof}

\section{Numerical results} \label{sec:numerics}
	We have tested the heuristics forwarded in this paper extensively.  Tables capturing these tests are given in the Appendix.  We describe some of the highlights and trends in the results here.  In Tables \ref{tab:ticket_1} - \ref{tab:ticket_4}, we see that our approximations generated from the heavy traffic limit theorems are quite good, especially when the arrival and service rates are large i.e. $\mu \approx$ 100.  In Table \ref{tab:ticket_1}, we simulate a scenario where all of the parameters are generated from exponential distributions.  We see that the approximations are good and accurate, except when the rates are small and the drift parameter $\beta$ is very negative.   

In Table \ref{tab:ticket_2}, we simulate a non-Markovian model where the service rate follows a lognormal distirbution and the balking and reneging distributions follow a uniform distribution.  Unlike the exponential distribution, the uniform distribution is bounded and we see that the simulated values in Table \ref{tab:ticket_1} are very similar to the values of Table \ref{tab:ticket_2} even though they differ in the types of distributions that generate the queueing dynamics.  

In Table \ref{tab:ticket_3} we simulate a scenario where all of the parameters are generated from exponential distributions.  This table is different than Table \ref{tab:ticket_1} since we use different balking and reneging parameter values.  Once again we see that the approximations are good and accurate, except when the rates are small and the drift parameter $\beta$ is very negative.  In Table \ref{tab:ticket_4} we simulate a non-Markovian model where the service rate follows a lognormal distirbution and the balking and reneging distributions follow a uniform distribution.  Once again the simulated results of Table \ref{tab:ticket_3} are very similar to Table \ref{tab:ticket_4} even though the distributions are not Markovian in Table \ref{tab:ticket_4}.  

In addition to our heavy traffic approximations accurately estimating the performance measures, we also notice that in every simulation the ticket queue is larger than the standard queue.  This fact is irrespective of the distributions that are used to generate the queueing dynamics.  Moreover, the fraction of balking in the ticket queue is also larger than the standard queue.  Even though these two processes converge as the rates tend to infinity, when the rates are finite, the ticket queue is perceived as being larger, which causes more people to balk from the system.  This difference in queue length is also a function of the parameter $\beta$.  When the $\beta$ parameter is larger and positive the difference is larger than when $\beta$ is negative.  Thus, when the rates are not infinite, the ticket queue exhibits interesting behavior that is to be expected.  
	
\section{Conclusions and extensions}
In this paper we studied the dynamics of a critically loaded queueing system subject to customers who might either balk 
	because the line is too long or abandon the queue from waiting too long in-queue.
We consider two types of abandonment protocols.
In the conventional approach to capturing abandonment, 
	customers immediately leave the system when their patience time has been 
	exceeded by their time waiting for service; we refer to the model in this setting as the standard queue.
In the standard queue, everyone is aware of an abandoning customer's departure at the time of abandonment. 
We compare the standard queue to the ticket queue.  In the ticket queue, customers whose patience has run out leave the queue in an unnoticed fashion. 
Their departure is only detected when their hypothetical service time would have begun.  The paper is complementary to the study of \cite{ref:XGO} who study a heavily loaded system with 
	impatient customers; in comparison, our customers are relatively patient. 
Our method of analysis is also fundamentally different from that of \cite{ref:XGO}.

We prove a heavy traffic limit theorem for the diffusion scaled queue length and workload processes.
A key result in the theorem is that the standard and ticket queues are asymptotically coupled under diffusion scaling.
The managerial interpretation is that regardless of how you implement your queue -- whether with a physical line, which
	is best modeled with a standard queue, or as a ticket queue -- the dynamics of the systems will not differ by much.
In addition to this insight based on the sensitivity of the diffusion scaling, we provide some heuristics for calculating
	certain performance metrics of operational importance. 
These heuristics are beyond the sensitivity of diffusion scaling. 
Nonetheless, we assess the accuracy of these heuristics through simulation. 
We find that in a broad range of parameter and distributional settings, the heuristics perform well.

\bibliographystyle{abbrvnat}
\bibliography{jp}

\section{Appendix}
The Appendix is split into two parts.  The first half provides proofs of results stated earlier in the paper.  The second half provides an extensive collection of numerical examples. 


\subsection{Proofs}
This Appendix contains many of the proofs from Section \ref{sec:proof}.
We start with a proof of Proposition \ref{prop:boundedBalkingAbandonment_shortInterval}.
\begin{proofof}{Proposition \ref{prop:boundedBalkingAbandonment_shortInterval}}
	Fix $\epsilon, \eta, t, K>0$ and set $\delta = \frac{\epsilon}{4 \mu  K \theta}$.  By Lemma \ref{lem:theta} and our choice of $\delta$, we have that
$$
	\mu n \delta 
	\left(
		F_b(K \sqrt{n}) + F_d(K/\sqrt{n})
	\right)
	<
	2 \mu \sqrt{n} K \theta \delta
	\le
	\frac{\epsilon \sqrt{n}}{2}
$$
	for sufficiently large $n$. 
It follows that, by Lemmas \ref{lem:boundedArrivals} and \ref{lem:uniformArrivals},
\begin{eqnarray}  \label{eq:proof:prop:boundedBalkingAbandonment_shortInterval.1}
\lefteqn{
		\Prob \left( 
			\sup_{s \le t}
			\sum_{i = A^n(s)+1}^{A^n(s+\delta)}
			\left(
				1(b_i \le K/\sqrt{n}) + 1(d_i \le K/\sqrt{n}) 
			\right)
			> \epsilon \sqrt{n}
		\right)
} &&\\
	&\le&\nonumber
		\Prob \left( 
			\sup_{0 \le j \le \left\lfloor 2\mu nt \right\rfloor}
			\sum_{i = j+1}^{j+ \left\lfloor 2\mu n \delta\right\rfloor}
			\left(
				1(b_i \le K/\sqrt{n}) + 1(d_i \le K/\sqrt{n}) 
			\right)
			> \epsilon \sqrt{n}
		\right) \\ \nonumber
	&&+ 
		\Prob \left(A^n(t) > 2 \mu n t\right) 
		+
		\Prob \left( \sup_{s\le t} \left( A^n(s+\delta) - A^n(s) \right) > 2\mu n \delta\right) \\
	&\le&\nonumber
		\frac{\eta}{2}
		+		
		\Prob \left( 2
			\sup_{0 \le j \le \left\lfloor 2 t/\delta \right\rfloor +1}
			\sum_{i = j\left\lfloor 2 \mu n \delta\right\rfloor + 1}^{(j+1)\left\lfloor 2\mu n \delta\right\rfloor}
			\left(
				1(b_i \le K/\sqrt{n}) - F_b(K/\sqrt{n}) 
			\right)
			> \frac{\epsilon \sqrt{n}}{6}
		\right) \\
	&&+\nonumber
		\Prob \left( 2
			\sup_{0 \le j \le \left\lfloor 2 t/\delta \right\rfloor +1}
			\sum_{i = j\left\lfloor 2 \mu n \delta\right\rfloor + 1}^{(j+1)\left\lfloor 2\mu n \delta\right\rfloor}
			\left(
				1(d_i \le K/\sqrt{n}) - F_d (K/\sqrt{n}) 
			\right)
			> \frac{\epsilon \sqrt{n}}{6}
		\right) \\
&&+\nonumber		
		\Prob \left( 2
			\sup_{0 \le j \le \left\lfloor 2 t/\delta \right\rfloor +1}
			\sum_{i = j\left\lfloor 2 \mu n \delta\right\rfloor + 1}^{(j+1)\left\lfloor 2\mu n \delta\right\rfloor}
			 F_b(K/\sqrt{n}) +  F_d (K/\sqrt{n}) 
			> \frac{\epsilon \sqrt{n}}{6}
		\right) 
\end{eqnarray}
for sufficiently large $n$.
\end{proofof}

We will show that, for sufficiently large $n$,
\begin{equation} \label{eq:proof:prop:boundedBalkingAbandonment_shortInterval.1.1}
		\Prob \left( 2
			\sup_{0 \le j \le \left\lfloor 2 t/\delta \right\rfloor +1}
			\sum_{i = j\left\lfloor \mu n \delta\right\rfloor + 1}^{(j+1)\left\lfloor \mu n \delta\right\rfloor}
			\left(
				1(b_i \le K/\sqrt{n}) - F_b(K/\sqrt{n}) 
			\right)
			> \frac{\epsilon \sqrt{n}}{6}
		\right)
	<
	\frac{\eta}{6},
\end{equation}
\begin{equation} \label{eq:proof:prop:boundedBalkingAbandonment_shortInterval.1.1b}
		\Prob \left( 2
			\sup_{0 \le j \le \left\lfloor 2 t/\delta \right\rfloor +1}
			\sum_{i = j\left\lfloor \mu n \delta\right\rfloor + 1}^{(j+1)\left\lfloor \mu n \delta\right\rfloor}
			 F_b(K/\sqrt{n}) + F_d(K/\sqrt{n}) 
			> \frac{\epsilon \sqrt{n}}{6}
		\right)
	<
	\frac{\eta}{6},
\end{equation}
and by symmetry it will also follow that
\begin{equation}  \label{eq:proof:prop:boundedBalkingAbandonment_shortInterval.1.2}
		\Prob \left( 2
			\sup_{0 \le j \le \left\lfloor 2 t/\delta \right\rfloor +1}
			\sum_{i = j\left\lfloor \mu n \delta\right\rfloor + 1}^{(j+1)\left\lfloor \mu n \delta\right\rfloor}
			\left(
				1(d_i \le K/\sqrt{n}) - F_d(K/\sqrt{n}) 
			\right)
			> \frac{\epsilon \sqrt{n}}{6}
		\right)
	<
	\frac{\eta}{6}.
\end{equation}
Consider \eqref{eq:proof:prop:boundedBalkingAbandonment_shortInterval.1.1}.
By Kolmogorov's inequality (see, e.g., \cite{ref:Bass}), 
\begin{eqnarray*} 
\lefteqn{
		\Prob \left( 2
			\sup_{0 \le j \le \left\lfloor 2 t/\delta \right\rfloor +1}
			\sum_{i = j\left\lfloor \mu n \delta\right\rfloor + 1}^{(j+1)\left\lfloor \mu n \delta\right\rfloor}
			\left(
				1(b_i \le K/\sqrt{n}) - F_b(K/\sqrt{n}) 
			\right)
			> \frac{\epsilon \sqrt{n}}{6}
		\right)
}
	&& \\ \nonumber
 	&\le& \nonumber
		\left( \frac{2  t}{\delta} +2\right)
		\left( \frac{144}{\epsilon^2 n} \right)
		\Exp \left[
			\left( 
			\sum_{i = 1}^{\left\lfloor \mu n \delta\right\rfloor}
			\left(
				1(b_i \le K/\sqrt{n}) - F_b(K/\sqrt{n}) 
			\right)
			\right)^2
		\right].
\end{eqnarray*}
The indicators above are independent, so the cross terms  all have expectations of zero. 
Hence
\begin{eqnarray*} 
\lefteqn{
		\Prob \left( 2
			\sup_{0 \le j \le \left\lfloor 2 \mu nt/\delta \right\rfloor +1}
			\sum_{i = j\left\lfloor \mu n \delta\right\rfloor + 1}^{(j+1)\left\lfloor \mu n \delta\right\rfloor}
			\left(
				1(b_i \le K/\sqrt{n}) - F_b(K/\sqrt{n}) 
			\right)
			> \frac{\epsilon \sqrt{n}}{6}
		\right)
}
	&& \\ \nonumber
	&\le& \nonumber
		\left( \frac{2 t}{\delta} +2\right)
		\left( \frac{144}{\epsilon^2 n} \right)
		\Exp \left[
			\sum_{i = 1}^{\left\lfloor \mu n \delta\right\rfloor}
			\left(
				1(b_i \le K/\sqrt{n}) - F_b(K/\sqrt{n}) 
			\right)^2
		\right] 
	\\ &<& \nonumber
		\left( \frac{2 \mu t}{\delta} +2\right)
		\left( \frac{144}{\epsilon^2 n} \right)
		\left(\mu n \delta\right)
		\left( \frac{2 K \theta}{\sqrt{n}} \right)
	\\ &\le& \nonumber
		\frac{\eta}{6}
\end{eqnarray*}
for sufficiently large $n$.
So \eqref{eq:proof:prop:boundedBalkingAbandonment_shortInterval.1.1} holds, as does
	\eqref{eq:proof:prop:boundedBalkingAbandonment_shortInterval.1.2} by symmetry.
The result  \eqref{eq:proof:prop:boundedBalkingAbandonment.0} 
	follows from \eqref{eq:proof:prop:boundedBalkingAbandonment_shortInterval.1}--\eqref{eq:proof:prop:boundedBalkingAbandonment_shortInterval.1.2}.

Next we prove Proposition \ref{prop:boundedLostWorkload_shortInterval}.
\begin{proofof}{Proposition \ref{prop:boundedLostWorkload_shortInterval}}
	We will alter \eqref{eq:proof:prop:boundedBalkingAbandonment_shortInterval} to 
		an expression that, without loss of generality, excludes the abandonment random variables.
	We will show that 
		for any $\eta, t >0$ and $K >0$, there exists an $\delta > 0$ such that
	\begin{equation} \label{eq:proof:prop:boundedLostWorkload_shortInterval.1}
		\Prob \left( 
			\sup_{s \le t}
			\sum_{i = A^n(s)+1}^{A^n(s+\delta)}
			v^n_i \cdot
				 1(b_i \le K / \sqrt{n}) 
			> \frac{\epsilon}{ \sqrt{n}}
		\right)
		< \eta,
	\end{equation}
	for sufficiently large $n$.
	Fix $\eta, t$ and $K$ and set $\delta = \frac{\epsilon}{12K \theta}$ and notice that by Lemma \ref{lem:theta},
	$$
		\left(\mu \delta n \right) \left( \frac{1}{\mu^n} \right) F_b(K/\sqrt{n})
		< 
		\frac{4 \delta K \theta}{\sqrt{n}}
		\le \frac{\epsilon}{3\sqrt{n}}
	$$
	for sufficiently large $n$.
	Proceeding in a manner similar to that of the proof of 
		Proposition \ref{prop:boundedBalkingAbandonment_shortInterval}, it follows
		by Lemmas \ref{lem:boundedArrivals} and \ref{lem:uniformArrivals} that
\begin{eqnarray} \label{eq:proof.1}
\lefteqn{
		\Prob \left( 
			\sup_{s \le t}
			\sum_{i = A^n(s)+1}^{A^n(s+\delta)}
			v^n_i \cdot
				 1(b_i \le K / \sqrt{n}) 
			> \frac{\epsilon}{ \sqrt{n}}
		\right)
}	&&\\ \nonumber
	&\le&
		\Prob \left( 
			\sup_{0 \le j \le \left\lfloor 2\mu nt \right\rfloor}
			\sum_{i = j+1}^{j+ \left\lfloor \mu n \delta\right\rfloor}
			v^n_i \cdot
				\left( 1(b_i \le K/\sqrt{n}) + 1(d_i \le K/\sqrt{n}) \right)
			>\frac{\epsilon}{ \sqrt{n}}
		\right) \\ \nonumber
	&&+ 
		\Prob \left(A^n(t) > 2 \mu n t \right) 
		+
		\Prob \left( \sup_{s\le t} \left( A^n(s+\delta) - A^n(s) \right) > 2\mu n \delta\right) \\
	&\le& \nonumber
	\frac{\eta}{4}
	+
		\Prob \left( 
			\sup_{0 \le j \le \left\lfloor 2\mu nt \right\rfloor}
			\sum_{i = j+1}^{j+ \left\lfloor \mu n \delta\right\rfloor}
			\left( v^n_i - \frac{1}{\mu^n} \right) \cdot
				1(b_i \le K/\sqrt{n}) 
			> \frac{\epsilon}{3 \sqrt{n}}
		\right)	\\
	&&+ \nonumber
		\Prob \left( 
			\sup_{0 \le j \le \left\lfloor 2\mu nt \right\rfloor}
			\sum_{i = j+1}^{j+ \left\lfloor \mu n \delta\right\rfloor}
			 \frac{1}{\mu^n}  \cdot
				\left( 1(b_i \le K/\sqrt{n}) - F_b(K/\sqrt{n}) \right)
			>\frac{\epsilon}{ 3 \sqrt{n}}
		\right)	
	\end{eqnarray}
	for sufficiently large $n$.
	Consider the third term on the far right hand side.
By Kolmogorov's inequality \cite{ref:Bass},
\begin{eqnarray} \label{eq:proof.2}
\lefteqn{
		\Prob \left( 
			\sup_{0 \le j \le \left\lfloor 2\mu nt \right\rfloor}
			\sum_{i = j+1}^{j+ \left\lfloor \mu n \delta\right\rfloor}
			 \frac{1}{\mu^n}  \cdot
				\left( 1(b_i \le K/\sqrt{n}) - F_b(K/\sqrt{n}) \right)
			>\frac{\epsilon}{ 3 \sqrt{n}}
		\right)	
} && \\ \nonumber
	&\le&
		\Prob \left( 2
			\sup_{0 \le j \le \left\lfloor 2 t/\delta \right\rfloor +1}
			\sum_{i = j\left\lfloor \mu n \delta\right\rfloor + 1}^{(j+1)\left\lfloor \mu n \delta\right\rfloor}
			\frac{1}{\mu^n} \cdot
			\left(
				1(b_i \le K/\sqrt{n}) - F_b(K/\sqrt{n}) 
			\right)
			> \frac{\epsilon}{3 \sqrt{n}}
		\right) \\
 	&\le& \nonumber
		\left( \frac{2  t}{\delta} +2\right)
		\left( \frac{36 n}{\epsilon^2} \right)
		\left(\frac{1}{\mu^n}\right)^2
		\Exp \left[
			\left( 
			\sum_{i = 1}^{\left\lfloor \mu n \delta\right\rfloor}
			\left(
				1(b_i \le K/\sqrt{n}) - F_b(K/\sqrt{n}) 
			\right)
			\right)^2
		\right] \\
 	&\le& \nonumber
		\left( \frac{2  t}{\delta} +2\right)
		\left( \frac{36 n}{\epsilon^2} \right)
		\left(\frac{2}{n \mu}\right)^2
		\left(n \mu \delta \right)
		F_b(K/\sqrt{n}) \\\nonumber
	&<&
		\left( 2t + 2 \delta \right) 
		\left( \frac{36 }{\epsilon^2} \right)
		\left( \frac{8K \theta}{\sqrt{n}} \right) \\
	&<& \nonumber
		\frac{\eta}{3}
\end{eqnarray}
for sufficiently large $n$.
As for the second term on the last right hand side of \eqref{eq:proof.1}, 
	applying Kolmogorov's inequality a second time yields
\begin{eqnarray} \label{eq:proof.3}
\lefteqn{
		\Prob \left( 
			\sup_{0 \le j \le \left\lfloor 2\mu nt \right\rfloor}
			\sum_{i = j+1}^{j+ \left\lfloor \mu n \delta\right\rfloor}
			\left( v^n_i - \frac{1}{\mu^n} \right) \cdot
				1(b_i \le K/\sqrt{n}) 
			> \frac{\epsilon}{3 \sqrt{n}}
		\right)	
} && \\ \nonumber
	&\le&
		\Prob \left( 2
			\sup_{0 \le j \le \left\lfloor 2 t/\delta \right\rfloor +1}
			\sum_{i = j\left\lfloor \mu n \delta\right\rfloor + 1}^{(j+1)\left\lfloor \mu n \delta\right\rfloor}
			\left( v^n_i - \frac{1}{\mu^n} \right) \cdot
				1(b_i \le K/\sqrt{n}) 
			> \frac{\epsilon}{3 \sqrt{n}}
		\right) \\
	&\le& \nonumber
		\left( \frac{2  t}{\delta} +2\right)
		\left( \frac{36 n}{\epsilon^2} \right)
		\Exp \left[
			\left(
			\left( v^n_i - \frac{1}{\mu^n} \right) \cdot
				1(b_i \le K/\sqrt{n}) 
			\right)^2
		\right] \\
	&\le& \nonumber
		\left( \frac{2  t}{\delta} +2\right)
		\left( \frac{36 n}{\epsilon^2} \right)
		\Exp \left[ 
			\left( v^n_i - \frac{1}{\mu^n} \right)^2 \cdot
				1(b_i \le K/\sqrt{n}) 
		\right],
\end{eqnarray}
where the last inequality follows because of the independence of the cross terms makes their expectations zero.
Further the service times are independent of the balking random variables. 
Hence,
\begin{eqnarray} \label{eq:proof.4}
\lefteqn{
		\left( \frac{2  t}{\delta} +2\right)
		\left( \frac{36 n}{\epsilon^2} \right)
		\Exp \left[ 
			\sum_{i = 1}^{\left\lfloor \mu n \delta\right\rfloor}
			\left( v^n_i - \frac{1}{\mu^n} \right)^2 \cdot
				1(b_i \le K/\sqrt{n}) 
		\right]
} && \\ \nonumber
	&\le& 
		\left( \frac{2  t}{\delta} +2\right)
		\left( \frac{36 n}{\epsilon^2} \right)
		\left(n  \mu \delta \right)
		\left( \frac{\sigma_b}{\mu^n} \right)^2
		F_b(K/\sqrt{n}) \\
	&<& \nonumber
		\left( 2  t + 2\delta \right)
		\left( \frac{36 }{\epsilon^2} \right)	
		\left( \frac{2 \sigma_b^2}{\mu} \right)
		\left(\frac{2 K \theta}{\sqrt{n}} \right) \\ \nonumber
	&\le& 
	\frac{\eta}{3}
\end{eqnarray}
for sufficiently large $n$.
The result \eqref{eq:proof:prop:boundedLostWorkload_shortInterval.1}
	 follows from  \eqref{eq:proof.1}--\eqref{eq:proof.4}. 
		
\end{proofof}

\begin{proofof}{Proposition \ref{prop:centeredBalkingReneging}}
It suffices to prove that the centered and scaled abandonment process is asymptotically negligible.
The analogous property for the balking process  can be proved in an identical fashion.
Fix $\varepsilon, \eta, t > 0$.
By Kolmogorov's inequality we have that
\begin{eqnarray*}
	\mathbb{P} 
		\left( \sup_{s \in [0,t]}  \tilde{M}^n_{d,\alpha}(s) > \epsilon \right) 
&=&
	\mathbb{P}  \left( \sup_{s \in [0,t]} M^n_{d,\alpha}(s)  > \sqrt{n} \cdot \epsilon \right) \\
& \leq &   
	\frac{1} {n \epsilon^2 }
	\Exp \left [ \left( M^n_{d,\alpha} (t) \right)^2 \right ] \\
& = & 
	\frac{1}{n \epsilon^2 } 
	\Exp \left [ 
		\left( \sum_{i=1}^{\lfloor nt \rfloor} 
			\left[1(d_i < Q^n_\alpha(t^n_i-)/\mu^n) - F_d(Q^n_\alpha(t^n_i-)/\mu^n) \right] 
		\right)^2 
	\right ]  
\end{eqnarray*}
Now by Burkholder's inequality and bounds for indicator functions, there exists a $c>0$ such that 
\begin{eqnarray*}
	\mathbb{P} 
		\left( \sup_{s \in [0,t]}  \tilde{M}^n_{d,\alpha}(s) > \epsilon \right) 
& \leq & 
	\frac{c}{n \epsilon^2 } 
	\Exp \left [ \sum_{i=1}^{\lfloor nt \rfloor} 
		\left[1(d_i < Q^n_\alpha(t^n_i-)/\mu^n) - F_d(Q^n_\alpha(t^n_i-)/\mu^n)\right]^2 
	\right ]  \\
& \leq & 
	\frac{c}{n \epsilon^2} 
 	\Exp \left [ \sum_{i=1}^{\lfloor nt \rfloor} 
		\left[1(d_i < Q^n_\alpha(t^n_i-)/\mu^n) + F_d(Q^n_\alpha(t^n_i-)/\mu^n)\right] 
	\right ]  \\
& \leq &
 	\frac{2 c t}{\epsilon^2}  
	\Prob \left(d_1 < \max_{s \le t}Q^n_\alpha(s)/\mu^n\right) \\
&<& \eta
\end{eqnarray*}
	for sufficiently large $n$.
The last inequality follows from Lemma \ref{lem:bounded}. 
This completes the proof.
\end{proofof}

\begin{proofof}{Proposition \ref{prop:queueWorkload}}
	Fix $\alpha \in \{S,T\}$ and $t >0$.
	For each $s\ge0,$ let $\breve{Q}^n_\alpha(s)$ denote the difference between the index of the last arriving job 
		 and the index of the job currently in service,
		plus any of the initial jobs present at time zero that remain in the system.
	The jobs present at time zero do not have indices.
	Note that, for each $s \ge 0$, $\breve{Q}^n_\alpha(s) \ge Q^n_\alpha(s)$, for both ticket and standard queues.
	The process $\breve{Q}^n_\alpha$ ignores the balking and abandonment that has taken place since the 
		arrival of the job currently in service. 
	One can think of the process as progressing in a manner similar to a ticket queue for which,
		in addition to the abandoned tickets,  balking is
		not accounted for until service would have begun for the departing job.
	We can bound the difference between the processes.
	Fix an arbitrary $\eta, \epsilon >0$ and choose an $L>0$ such that by 
		Lemma \ref{lem:bounded},
\begin{eqnarray}   \label{eq:lem:queueWorkload1}
\lefteqn{
	\Prob \left(
		\sup_{s \in [0,t]} \left(
			\breve{Q}^n_\alpha(s) - Q^n_\alpha(s) 
		\right)
		> 
		\frac{\epsilon \sqrt{n}}{3}
	\right)
} &&\\
	&\le& \nonumber
	\Prob \left(
		\sup_{s \in [0,t]} Q^n_\alpha(s) >  L \sqrt{n}
	\right)  
	+
	\Prob \left(
		\sup_{s \in [0,t]} W^n_\alpha(s) >  L / \sqrt{n}
	\right)	
	\\ \nonumber
	&&+ \; 
	\Prob \left(
		\sup_{j \le A^n(t)}
		\sum_{i=j+1}^{j+\lfloor (L + \epsilon/3) \sqrt{n} \rfloor} 
			\left(
			1\left(b_i \le L \sqrt{n} / \mu^n\right)
			+
			1\left(d_i \le L / \sqrt{n}  \right)
			\right)
		> 
		\frac{\epsilon \sqrt{n}}{3}
	\right) \\  \nonumber
	&<& 
	\frac{\eta}{5}
	+
	\Prob \left(
		\sup_{j \le A^n(t)}
		\sum_{i=j+1}^{j+\lfloor (L + \epsilon/3) \sqrt{n} \rfloor} 
			\left(
			1\left(b_i \le L  / \sqrt{n} \right)
			+
			1\left(d_i \le L / \sqrt{n} \right)
			\right)
		> 
		\frac{\epsilon \sqrt{n}}{3}
	\right)
\end{eqnarray}
	for sufficiently large $n$.
	To appreciate the above inequality, note that $\breve{Q}^n_\alpha$ is an inflated version of $Q^n_\alpha$.
	The jobs in the former not accounted for in the latter must have abandoned or balked, 
		or will have eventually abandoned.
	So if there exists an $s \in [0,t]$ such that $\breve{Q}^n_\alpha(s) - Q^n_\alpha(s)$ exceeds $(L+\epsilon/3) \sqrt{n}$ 
		and $Q^n_\alpha(u) \le L\sqrt{n}$
		for all $u \in [0,t]$,
		then there must be at least $\epsilon \sqrt{n}/3$ abandoned or balked jobs within some $(L + \epsilon/3) \sqrt{n}$ 
		consecutively arriving jobs.
	We place upper bounds on the queue length and workload and this makes our abandonment and balking
		indicators i.i.d. random variables. 
	
	For any $\delta >0$ it is true that $(L+\epsilon/3) \sqrt{n} < \mu \delta n$ for sufficiently large $n$.
	Hence, by Proposition  \ref{prop:boundedBalkingAbandonment_shortInterval},
	\begin{eqnarray}   \label{eq:lem:queueWorkload1.1}
	\lefteqn{
	\Prob \left(
		\sup_{j \le A^n(t)}
		\sum_{i=j+1}^{j+\lfloor (L + \epsilon/3) \sqrt{n} \rfloor} 
			\left(
			1\left(b_i \le L / \sqrt{n} \right)
			+
			1\left(d_i \le L / \sqrt{n} \right)
			\right)
		> 
		\frac{\epsilon \sqrt{n}}{3}
	\right)	
	} && \\ \nonumber
	&\le& \Prob \left(
		\sup_{j \le A^n(t)}
		\sum_{i=j+1}^{j+ \lfloor \mu \delta n \rfloor} 
			\left(
			1\left(b_i \le L / \sqrt{n} \right)
			+
			1\left(d_i \le L / \sqrt{n} \right)
			\right)
		> 
		\frac{\epsilon \sqrt{n}}{3}
	\right) \\ \nonumber
	&\le& \Prob \left(
		\sup_{s \le t}
		\sum_{i=A^n(s)+1}^{A^n(s+2\delta)}
			\left(
			1\left(b_i \le L / \sqrt{n} \right)
			+
			1\left(d_i \le L / \sqrt{n} \right)
			\right)
		> 
		\frac{\epsilon \sqrt{n}}{3}
	\right) \\ \nonumber
	&&+
	\Prob \left(
		\inf_{s \le t}
			\left(A^n(s+2\delta) - A^n(s) \right)
		<  \mu \delta n 
	\right) \\ \nonumber
	&<& 
	\frac{\eta}{5}.
	\end{eqnarray}

	Additionally, define for each $s \ge 0$ the quantity $\breve{W}^n_\alpha(s)$ that tracks the service times
		of the jobs associated with $\breve{Q}^n_\alpha(s)$.
	This process has the {\it entire} service time of the job currently in service and the service times of all jobs that arrive 
		after the arrival time of the job in service; 
		jobs that abandon or balk   contribute the process $\breve{W}^n_\alpha$ nonetheless.
	Just as with $Q^n_\alpha$ and its augmented version $\breve{Q}^n_\alpha$, 
		it is also the case that $\breve{W}^n_\alpha(s) \ge W^n_\alpha(s)$ for each $s \ge 0$.
	Unlike the process $W^n_\alpha$, the augmented process $\breve{W}^n_\alpha$ experiences both upward and
		 downward jumps.
	Upward jumps are the size of the would-be service time of each arriving job, even those that balk or abandon,
		and occur at the time of arrival of the corresponding job.
	The downward jumps occur at service completion times. 
	The downward jump size is equal to the service time of the job that was in service plus
		the would-be service times of jobs that arrived between the arrival time of the job just served and the
		arrival time of the next job to be served. 
	Just as we did in \eqref{eq:lem:queueWorkload1} for the queue lengths, we can bound the difference between
		these proceses.
	By Lemmas \ref{lem:boundedArrivals}, \ref{lem:boundedMaxService}, and \ref{lem:bounded},
		and Equation \eqref{eq:lem:queueWorkload1},
\begin{eqnarray} \label{eq:lem:queueWorkload2}
\lefteqn{
	\Prob \left(\mu^n
		\sup_{s \in [0,t]} \left(
			\breve{W}^n_\alpha(s) - W^n_\alpha(s) 
		\right)
		> 
		\frac{\epsilon \sqrt{n}}{3}
	\right)
} &&\\ \nonumber
	&\le&
	\Prob \left(
		\mu^n \cdot \sup_{i \le A^n(t)} v^n_i > \frac{\epsilon  \sqrt{n}}{6}
	\right)
	+
	\Prob \left(
		\sup_{s \in [0,t]} \hat Q^n_\alpha(s) >  (L+\epsilon/3) \sqrt{n}
	\right) \\ \nonumber
	&&+
	\Prob \left(
		\sup_{s \in [0,t]} Q^n_\alpha(s) / \mu^n >  L  / \sqrt{n}
	\right)	
	+
	\Prob \left(
		\sup_{s \in [0,t]} W^n_\alpha(s) >  L  / \sqrt{n}
	\right)		
	  \\ \nonumber
	&&+ \; 
	\Prob \left( \mu^n \cdot
		\sup_{j \le A^n(t)}
		\sum_{i=j+1}^{j+\lfloor (L + \epsilon/3) \sqrt{n} \rfloor} 
			v^n_i \cdot \left(
			1\left(b_i \le L /  \sqrt{n}  \right)
			+
			1\left(d_i \le L / \sqrt{n} \right)
			\right)
		> 
		\frac{\epsilon \sqrt{n}}{6}
	\right) 
	\\  \nonumber  
	&<& 
	\frac{\eta}{5}
	+
	\Prob \left( \mu^n \cdot
		\sup_{j \le A^n(t)}
		\sum_{i=j+1}^{j+\lfloor (L + \epsilon/3) \sqrt{n} \rfloor} 
			v^n_i \cdot \left(
			1\left(b_i \le L /  \sqrt{n}  \right)
			+
			1\left(d_i \le L / \sqrt{n} \right)
			\right)
		> 
		\frac{\epsilon \sqrt{n}}{6}
	\right) 	
\end{eqnarray}
	for sufficiently large $n$.
	As with \eqref{eq:lem:queueWorkload1}, we replace $(L + \epsilon)\sqrt{n}$ with a bigger quantity $\mu \delta n$ 
		(provided $n$ is sufficiently large), where, 
		by Proposition  \ref{prop:boundedLostWorkload_shortInterval},
		$\delta >0$ is chosen such that
	\begin{eqnarray}   \label{eq:lem:queueWorkload2.1}
	\lefteqn{
	\Prob \left(
		\mu^n
		\sup_{j \le A^n(t)}
		\sum_{i=j+1}^{j+\lfloor (L + \epsilon/3) \sqrt{n} \rfloor} 
			v^n_i \cdot
			\left(
			1\left(b_i \le L /  \sqrt{n}  \right)
			+
			1\left(d_i \le L / \sqrt{n} \right)
			\right)
		> 
		\frac{\epsilon \sqrt{n}}{6}
	\right)	
	} && \\ \nonumber
	&\le& \Prob \left(
		\mu^n
		\sup_{j \le A^n(t)}
		\sum_{i=j+1}^{j+ \lfloor \mu \delta n \rfloor} 
			v^n_i \cdot
			\left(
			1\left(b_i \le L /  \sqrt{n}  \right)
			+
			1\left(d_i \le L / \sqrt{n} \right)
			\right)
		> 
		\frac{\epsilon \sqrt{n}}{6}
	\right) \\ \nonumber
	&\le& \Prob \left(
		\mu^n
		\sup_{s \le t}
		\sum_{i=A^n(s)+1}^{A^n(s+2\delta)} 
			v^n_i \cdot
			\left(
			1\left(b_i \le L /  \sqrt{n}  \right)
			+
			1\left(d_i \le L / \sqrt{n} \right)
			\right)
		> 
		\frac{\epsilon \sqrt{n}}{6}
	\right) \\ \nonumber
	&&+
	\Prob \left(
		\inf_{s \le t}
			\left(A^n(s+2\delta) - A^n(s) \right)
		<  \mu \delta n 
	\right) \\ \nonumber
	&<& 
	\frac{\eta}{5}
	\end{eqnarray}		
	for sufficiently large $n$.

Lastly, note that by the functional weak law of large numbers,
\begin{equation} \label{eq:lem:queueWorkload3}
	\Prob \left( \sup_{s \in [0,t]} \left |\breve{Q}^n_\alpha(s) - \mu^n \breve{W}^n_\alpha(s) \right| > \frac{\varepsilon  \sqrt{n} }{3} \right)
	< 
	\frac{\eta}{5}
\end{equation}
for sufficiently large $n$.

 It now follows from the Triangle Inequality and \eqref{eq:lem:queueWorkload1}--\eqref{eq:lem:queueWorkload3} that
\begin{eqnarray*}
\lefteqn{ 
	\Prob \left( \sup_{s \in [0,t]} \left |\tilde{Q}^n_\alpha(s) - \mu \tilde{W}^n_\alpha(s) \right| > \varepsilon \right) } \\
& \leq &  
	\Prob \left( \sup_{s \in [0,t]} \left | Q^n_\alpha(s) - \breve{Q}^n_\alpha(s) \right| > \frac{\varepsilon  \sqrt{n} }{3} \right) 
	+ 
	\Prob \left(\mu^n  \sup_{s \in [0,t]} \left |W^n_\alpha(s) - \breve{W}^n_\alpha(s) \right| > \frac{\varepsilon \sqrt{n} }{3 } \right) \\
&&+   
	\Prob \left( \sup_{s \in [0,t]} \left |\breve{Q}^n_\alpha(s) - \mu^n \breve{W}^n_\alpha(s) \right| > \frac{\varepsilon  \sqrt{n} }{3} \right)  \\
&<& \eta
\end{eqnarray*}
for sufficiently large $n$.
This completes the proof.
\end{proofof}

\begin{proofof}{Proposition \ref{prop:allocationLimit}}
	Fix $\varepsilon, \eta, t > 0$ and $\alpha$.
	The server cannot work faster than rate one. It follows that for each $s \le t$,
\begin{equation} \label{eq:allocationLimit1}
	T^n_\alpha(s) \le s.
\end{equation}
Using \eqref{eq:mod:workload^n} we can provide a lower bound on the service allocation process for any $s \ge 0$,
\begin{equation} \label{eq:allocationLimit2}
 	T^n_\alpha(s) 
	\ge 
	-W^n_\alpha(s) 
	+ \sum_{i=1}^{A^n(s)}
			v^n_i
	- \sum_{i=1}^{A^n(s)}
			v^n_i
			\left(
				1(b_i < Q^n_\alpha(t_i-)/ \mu^n)
				+
				1(d_i < W^n_\alpha(t_i-))
			\right).
	\end{equation}
	It follows from \ref{eq:allocationLimit1}, and \eqref{eq:allocationLimit2} that
	\begin{eqnarray}   \label{eq:allocationLimit3}
	\Prob \left(
		 	\sup_{s \in [0,t]}
			\left| 
				T^n_\alpha(s)
				-
				s
			\right| 
			> \varepsilon
	\right)
	&=&
	\Prob \left(
		 \sup_{s \in [0,t]}
			T^n_\alpha(s)
			<
			s - \varepsilon
	\right) \\
	&\le& \nonumber
	\Prob \left(
		 \sup_{s \in [0,t]}
			W^n_\alpha(s) 
		> \frac{\varepsilon}{3}
	\right) 
	+
	\Prob \left(
		\sup_{s \in [0,t]}
		\left|
		\sum_{i=1}^{A^n(s)}
			v^n_i
		-
		s
		\right|
		> \frac{\varepsilon}{3}
	\right) 	\\
	&&+ \nonumber
	\Prob \left(
		\sum_{i=1}^{A^n(t)}
			v^n_i \cdot
			\left(
				1(b_i < Q^n_\alpha(t_i-)/ \mu^n)
				+
				1(d_i < W^n_\alpha(t_i-))
			\right)
		> \frac{\varepsilon}{3}
	\right)
\end{eqnarray}
From Lemma \ref{lem:uniformPotentialWork} and Lemma \ref{lem:bounded} we can bound the first two terms on the right hand side,
\begin{equation} 
	\Prob \left(
		 \sup_{s \in [0,t]}
			W^n_\alpha(s) 
		> \frac{\varepsilon}{3}
	\right) 
	+
	\Prob \left(
		\sup_{s \in [0,t]}
		\left|
		\sum_{i=1}^{A^n(s)}
			v^n_i
		-
		s
		\right|
		> \frac{\varepsilon}{3}
	\right) 
	<
	\frac{\eta}{2}.
\end{equation}
For the third term, by Lemma \ref{lem:boundedArrivals} and Proposition \ref{prop:boundedLostWorkload_shortInterval},
 \begin{eqnarray}   \label{eq:allocationLimit4}
\lefteqn{
	\Prob \left(
		\sum_{i=1}^{A^n(t)}
			v^n_i \cdot
			\left(
				1(b_i <Q^n_\alpha(t_i-)/ \mu^n)
				+
				1(d_i < W^n_\alpha(t_i-))
			\right)
		> \frac{\varepsilon}{3}
	\right)
} \\ 
&\le&
	\Prob \left(  A^n(t) > 2 \mu n  t \right)
	+
	\Prob \left(	
		\sum_{i=1}^{2 \mu n  t}
			v^n_i \cdot
			\left(
				1(b_i < Q^n_\alpha(t_i-)/ \mu^n)
				+
				1(d_i < W^n_\alpha(t_i-))
			\right) 
		> 
		\frac{ \epsilon}{3}
	\right) \\  \nonumber
&\le&
  \Prob \left(	
		\sum_{i=1}^{2 \mu n  t}
			\left( v^n_i - \frac{1}{\mu n} \right)  \cdot
			\left(
				1(b_i < Q^n_\alpha(t_i-)/ \mu^n)
				+
				1(d_i < W^n_\alpha(t_i-))
			\right) 
		> 
		\frac{ \epsilon}{9}
	\right) \\ \nonumber
&+&  \Prob \left(	
		\sum_{i=1}^{2 \mu n  t}
			 \frac{1}{\mu n} \cdot
			\left(
				1(b_i < Q^n_\alpha(t_i-)/ \mu^n) - F_b(Q^n_\alpha(t_i-)/ \mu^n)
				+
				1(d_i < W^n_\alpha(t_i-)) - F_d(W^n_\alpha(t_i-)) 
			\right) 
		> 
		\frac{ \epsilon}{9}
	\right) \\ \nonumber
&+&  \Prob \left(	
		\sum_{i=1}^{2 \mu n  t}
			 \frac{1}{\mu n} \cdot
			\left(
				 F_b(Q^n_\alpha(t_i-)/ \mu^n)
				+
				 F_d(W^n_\alpha(t_i-)) 
			\right) 
		> 
		\frac{ \epsilon}{9}
	\right) + \Prob \left(  A^n(t) > 2 \mu n  t \right) \\ \nonumber
&\leq&
	\frac{\eta}{8} + \frac{\eta}{8} + \frac{\eta}{8} + \frac{\eta}{8} \\ \nonumber
	&<&
	\frac{\eta}{2}.
\end{eqnarray} 
The result follows from \eqref{eq:allocationLimit2} - \eqref{eq:allocationLimit4} and a modification of the proofs of Propositions \ref{prop:boundedBalkingAbandonment_shortInterval} - \ref{prop:centeredBalkingReneging}.
\end{proofof}

\begin{proofof}{Proposition \ref{prop:R2R}}
	Fix $\epsilon, \eta$ and $t >0$. 
	First notice that for any $n \ge 0$, $s \ge 0$, and $\alpha \in \{S.T\}$, 
		we have that $R^{2,n}_\alpha(s) - R^{n}_\alpha(s) \ge 0.$
	It is instructive to expand this difference for each of the $\alpha$ values. 
	For the standard queue,
	\begin{eqnarray*}
		R^{2,n}_S(s) - R^{n}_S(s)
		&=&
		\sum_{i=1}^{A^n(s)}
			1(b_i > Q^n_S(t^n_i-)/\mu^n) \cdot 1(d_i \le W^n_S(t^n_i) )  \cdot 1( d_i > s - t^n_i ) \\
		&\le&
		\sum_{i=1}^{A^n(s)}
			1(d_i \le W^n_S(t^n_i) )  \cdot 1( W^n_S(t^n_i) > s - t^n_i).
	\end{eqnarray*}
	Note that we have eliminated the indicator associated with balking.
	Moreover, for an abandoning customer the workload upon arrival must exceed the patience quantity. 
	Hence we can replace the patience quantity in the last of the indicators with the workload upon arrival.
	Now we consider the ticket queue:
	\begin{eqnarray*}
		R^{2,n}_T(s) - R^{n}_T(s)
		&=&
		\sum_{i=1}^{A^n(s)}
			1(b_i > Q^n_T(t^n_i-)/\mu^n) \cdot 1(d_i \le W^n_T(t^n_i) )  \cdot 1( W^n_T(t^n_i) > s - t^n_i ) \\
		&\le&
		\sum_{i=1}^{A^n(s)}
			1(d_i \le W^n_T(t^n_i) )  \cdot 1( W^n_T(t^n_i) > s - t^n_i).
	\end{eqnarray*}	
	Both the standard and the ticket queue have the same bounds:
	$$
		\Prob \left(\sup_{s \in [0,t]} \left| 
			\tilde{R}^{2,n}_\alpha(s) - \tilde{R}^n_\alpha(s) 
		\right| > \epsilon \right)
		\le
		\Prob \left( 
			\sup_{s \in [0,t]}
			\sum^{A^n(s)}_{i=1} 
				1(d_i \le W^n_\alpha(t^n_i) )  \cdot 1( W^n_\alpha(t^n_i) > s - t^n_i )
			>
			 \sqrt{n}  \epsilon 
		\right)
	$$
		for each $\alpha \in \{S,T\}$.
	For the remainder of the proof, fix $\alpha$.
	Next we replace the workload quantities with an upper bound:
	\begin{eqnarray*} 
		\Prob \left( \sup_{s \in [0,t]} \left| 
			\tilde{R}^{2,n}_\alpha(s) - \tilde{R}^n_\alpha(s) 
		\right| > \epsilon \right)
		&\le&
		\Prob \left( 
			\sup_{s \in [0,t]}
			\sum^{A^n(s)}_{i=1} 
				1(d_i \le K/\sqrt{n} )  \cdot 1( K/\sqrt{n} > s - t^n_i )
			>
			 \sqrt{n}  \epsilon 
		\right) \\
		&&+ \Prob 
			\left(
				\sup_{s \in [0,t]} W^n_\alpha(s) > K/\sqrt{n} 
			\right) .
	\end{eqnarray*}
	Notice that in the first term on the righthand side above, the only jobs that contribute positively to the summation
		are those  jobs $i$ whose arrival time is after $s- K/\sqrt{n}$; that $t^n_i > s - K/\sqrt{n}$.
	By Lemma \ref{lem:bounded},
	$$
		\Prob \left( \sup_{s \in [0,t]} \left| 
			\tilde{R}^{2,n}_\alpha(s) - \tilde{R}^n_\alpha(s) 
		\right| > \epsilon \right)
	\le 	
		\frac{\eta}{2}
		+
		\Prob \left( 
			\sup_{s \in [0,t]}
			\sum^{A^n(s)}_{i=A^n((s-K/\sqrt{n})^+)} 
				1(d_i \le K/\sqrt{n} ) 
			>
			 \sqrt{n}  \epsilon 
		\right) 
	$$
	for sufficiently large $n$.
	For any arbitrarily chosen $\delta>0$ it is true that $\delta > K/\sqrt{n}$ for sufficiently large $n$.
	It follows then by Proposition \ref{prop:boundedBalkingAbandonment_shortInterval}
		that we can choose a $\delta >0$ so that
	\begin{eqnarray*} 
		\Prob \left( \sup_{s \in [0,t]} \left| 
			\tilde{R}^{2,n}_\alpha(s) - \tilde{R}^n_\alpha(s) 
		\right| > \epsilon \right)
	&\le& 	
		\frac{\eta}{2}
		+
		\Prob \left( 
			\sup_{s \in [0,t]}
			\sum^{A^n(s)}_{i=A^n((s-K/\sqrt{n})^+)} 
				1(d_i \le K/\sqrt{n} ) 
			>
			 \sqrt{n}  \epsilon 
		\right) \\
	&\le& 	
		\frac{\eta}{2}
		+
		\Prob \left( 
			\sup_{s \in [0,t]}
			\sum^{A^n(s)}_{i=A^n((s-\delta)^+)} 
				1(d_i \le K/\sqrt{n} ) 
			>
			 \sqrt{n}  \epsilon 
		\right) \\		
	&<& \eta
	\end{eqnarray*}
	for sufficiently large $n$. 	This completes the proof.
\end{proofof}

\begin{proofof}{Proposition \ref{prop:R1R2}}
	Fix $\epsilon, \eta, t >0$ and $\alpha \in \{S,T\}$.
	Notice that $R^{1,n}_\alpha - R^{2,n}_\alpha$ is nondecreasing.
	Hence, replacing the workload and queue length with an upper bound yields
	\begin{eqnarray*}
	\Prob\left(
		\sup_{s \in [0,t]} \left| 
			\tilde{R}^{1,n}_\alpha(s) - \tilde{R}^{2,n}_\alpha(s) 
		\right| 
		> \epsilon \right) 
	&\le&
	\Prob \left( 
		\sum^{A^n(t)}_{i=1} 
			 1(b_i \le Q^n_\alpha(t^n_i-)/\mu^n ) 
			 \cdot 
			 1(d_i \le W^n_T(t^n_i-) )
		>  \sqrt{n}  \epsilon 
	\right) \\
	&\le&
	\Prob \left( 
		\sum^{\left\lfloor 2\mu nt \right\rfloor}_{i=1} 
			 1(b_i \le K/\sqrt{n})
			 \cdot 
			 1(d_i \le K/\sqrt{n})
		>  \sqrt{n}  \epsilon 
	\right) 
	+
	\Prob \left( A^n(t) > 2 \mu nt\right)\\
	&&+
	\Prob \left( 
		\sup_{s \in [0,t]}
			Q^n_\alpha(s)/ \mu^n > K/\sqrt{n}
	\right)
	+
	\Prob \left( 
		\sup_{s \in [0,t]}
			W^n_\alpha(s) > K/\sqrt{n}
	\right).
	\end{eqnarray*}
	Choose a $K>0$ such that by Lemma \ref{lem:boundedArrivals}
		and \eqref{eq:lemBoundedQueue} and \eqref{eq:lemBoundedWorkload} of Lemma \ref{lem:bounded},
	\begin{equation} \label{eq:R1R2.1}
	\Prob\left(
		\sup_{s \in [0,t]} \left| 
			\tilde{R}^{1,n}_\alpha(s) - \tilde{R}^{2,n}_\alpha(s) 
		\right| 
		> \epsilon \right) 
	<
	\frac{\eta}{2}
	+	
	\Prob \left( 
		\sum^{\left\lceil 2\mu nt \right\rceil}_{i=1} 
			 1(b_i \le K/\sqrt{n})
			 \cdot 
			 1(d_i \le K/\sqrt{n})
		>  \sqrt{n}  \epsilon 
	\right)
	\end{equation}
	
	As for the second term in the right hand side above, we resort to adding and subtracting the mean of each summand,
	\begin{eqnarray} \nonumber
	\lefteqn{
	\Prob \left( 
		\sum^{\left\lfloor 2\mu nt \right\rfloor}_{i=1} 
			 1(b_i \le K/\sqrt{n})
			 \cdot 
			 1(d_i \le K/\sqrt{n})
		>  \sqrt{n}  \epsilon 
	\right)
	} \\ \nonumber
	&\le&
		\Prob \left( 
		\sum^{\left\lfloor 2\mu nt \right\rfloor}_{i=1}  
			 \Exp \left[
			 1(b_i \le K/\sqrt{n})
			 \cdot 
			 1(d_i \le K/\sqrt{n})
			 \right]		 
		>  \frac{\sqrt{n}  \epsilon}{2}
	\right) \\ \nonumber
	&&+
		\Prob \left( 
		\sum^{\left\lfloor 2\mu nt \right\rfloor}_{i=1}  \left(
			 1(b_i \le K/\sqrt{n})
			 \cdot 
			 1(d_i \le K/\sqrt{n})
			 -
			 \Exp \left[
			 1(b_i \le K/\sqrt{n})
			 \cdot 
			 1(d_i \le K/\sqrt{n})
			 \right]
			 \right)			 
		>  \frac{\sqrt{n}  \epsilon}{2}
	\right) \\ \nonumber
	&\le&
		\Prob \left( 
		 	(2\mu nt )
			 F_b( K/\sqrt{n})
			 F_d( K/\sqrt{n})	 
		>  \frac{\sqrt{n}  \epsilon}{2}
	\right) \\ \label{eq:R1R2.2}
	&&+
		\Prob \left( 
		\sum^{\left\lfloor 2\mu nt \right\rfloor}_{i=1}  \left(
			 1(b_i \le K/\sqrt{n})
			 \cdot 
			 1(d_i \le K/\sqrt{n})
			 -
			 F_b( K/\sqrt{n})
			 F_d( K/\sqrt{n})	 
			 \right)			 
		>  \frac{\sqrt{n}  \epsilon}{2}
		\right).
	\end{eqnarray}
	By Lemma \ref{lem:theta} we can bound the first term on the right hand side:
	\begin{equation} \label{eq:R1R2.3}
		(2\mu nt )
			 F_b( K/\sqrt{n})
			 F_d( K/\sqrt{n})	 
		<  \frac{\sqrt{n}  \epsilon}{2}
	\end{equation}
	for sufficiently large $n$.
	As for the second term, by Chebyshev's Inequality and \eqref{eq:R1R2.3}
	\begin{eqnarray} \nonumber
	\lefteqn{
		\Prob \left( 
		\sum^{\left\lfloor 2\mu nt \right\rfloor}_{i=1}  \left(
			 1(b_i \le K/\sqrt{n})
			 \cdot 
			 1(d_i \le K/\sqrt{n})
			 -
			 F_b( K/\sqrt{n})
			 F_d( K/\sqrt{n})	 
			 \right)			 
		>  \frac{\sqrt{n}  \epsilon}{2}
		\right)
	} \\ \nonumber
	&\le&
		\frac{4}{n \epsilon^2}
		\Exp\left[ \left(
			\sum^{\left\lfloor 2\mu nt \right\rfloor}_{i=1}  \left(
				 1(b_i \le K/\sqrt{n})
				 \cdot 
				 1(d_i \le K/\sqrt{n})
				 -
				 F_b( K/\sqrt{n})
				 F_d( K/\sqrt{n})	 
			 \right)		
		\right)^2 \right] \\ \nonumber
	&\le&
		\frac{4}{n \epsilon^2}
		\Exp\left[ \
			\sum^{\left\lfloor 2\mu nt \right\rfloor}_{i=1}  \left(
				 1(b_i \le K/\sqrt{n})
				 \cdot 
				 1(d_i \le K/\sqrt{n})
				 -
				 F_b( K/\sqrt{n})
				 F_d( K/\sqrt{n})	 
			 \right)^2		
		\right] \\ \nonumber
	&<&
		\frac{8}{n \epsilon^2}
		(2\mu nt )
				 F_b( K/\sqrt{n})
				 F_d( K/\sqrt{n})	 \\ \label{eq:R1R2.4}
	&<&
	\frac{\eta}{2}.
	\end{eqnarray}			
	The result follows from \eqref{eq:R1R2.1} - \eqref{eq:R1R2.4}.
\end{proofof}

\begin{proofof}{Proposition \ref{prop:R0R1}}
	Fix $\epsilon, \eta, t >0$, $\alpha \in \{S,T\}$, and set $\delta = \epsilon/(16 \mu t)$.
	By Lemma \ref{lem:bounded} and Proposition \ref{prop:queueWorkload}, respectively, we can choose a $K>0$
		such that for sufficiently large $n$,
	\begin{equation} \label{eq:R0R1.0.1}
		\Prob \left(
			\sup_{s \in [0,t]}
				W^n_\alpha(s)
			> K/\sqrt{n}
		\right)
		<
		\min\left(
			\frac{\eta}{4},
			\frac{\epsilon^2 \eta}{128 \mu t }
		\right)
	\end{equation}
	and
	\begin{equation} \label{eq:R0R1.0.2}
	\Prob \left(
		\sup_{s \in [0,t]}
		\left|
			Q^n_\alpha(s) / \mu^n 
			-
			W^n_\alpha(s)
		\right|
		> \delta / \sqrt{n}
	\right)
	<
	\frac{\eta}{8}.
	\end{equation}	
	Fix such a $K$.
	We start by replacing the weighted queue length with the workload process:
	\begin{eqnarray} \nonumber
	\lefteqn{
	\Prob \left(
		\sup_{s \in [0,t]} \left| 
			\tilde{R}^{0,n}_\alpha(s) - \tilde{R}^{1,n}_\alpha(s) 
		\right| 
		> \epsilon \right) 	
	} \\ \nonumber
	&=&
	\Prob \left(
		\sup_{s \in [0,t]} \left| 
			\sum_{i=1}^{A^n(s)}
				1(d_i \le Q^n_\alpha(t^n_i-)/\mu^n)
				-
				1(d_i \le W^n_\alpha(t^n_i-))
		\right| 
		> \sqrt{n} \epsilon \right) 	\\ \nonumber
	&\le&
	\Prob \left(
		\sum_{i=1}^{A^n(t)}
			1(d_i \in [W^n_\alpha(t^n_i-) - \delta / \sqrt{n}, W^n_\alpha(t^n_i-) + \delta / \sqrt{n}])
		>  \sqrt{n} \epsilon 
	\right) \\ \label{eq:R0R1.1}
	&&+
	\Prob \left(
		\sup_{s \in [0,t]}
		\left|
			Q^n_\alpha(s) / \mu^n 
			-
			W^n_\alpha(s)
		\right|
		> \delta / \sqrt{n}
	\right)
	\end{eqnarray}
	
	The second term of the far right hand side is handled by \eqref{eq:R0R1.0.2}.
	As for the first term, we construct a Martingale.
	By Lemma \ref{lem:boundedArrivals},
	\begin{eqnarray} \nonumber
	\lefteqn{
	\Prob \left(
		\sum_{i=1}^{A^n(t)}
			1(d_i \in [W^n_\alpha(t^n_i-) - \delta / \sqrt{n}, W^n_\alpha(t^n_i-) + \delta / \sqrt{n}])
		>  \sqrt{n} \epsilon 
	\right) 
	} \\ \nonumber
	&\le&
	\frac{\eta}{4}
	+
	\Prob \left(
		\sum_{i=1}^{\left\lfloor 2\mu nt \right\rfloor}
			\Exp\left[ \left. 1(d_i \in [W^n_\alpha(t^n_i-) - \delta / \sqrt{n}, W^n_\alpha(t^n_i-) + \delta / \sqrt{n}])
				\right|   \mathcal{F}^n_{i-1} \right]
		>  \frac{\sqrt{n} \epsilon}{2} 
	\right) \\ \nonumber
	&&+
	\Prob \left( 
		\sum_{i=1}^{\left\lfloor 2\mu nt \right\rfloor} \left(
			1(d_i \in [W^n_\alpha(t^n_i-) - \delta / \sqrt{n}, W^n_\alpha(t^n_i-) + \delta / \sqrt{n}])
			\right. \right. \\ \nonumber
	&& \qquad \left. \left.
			- \Exp\left[ \left. 1(d_i \in [W^n_\alpha(t^n_i-) - \delta / \sqrt{n}, W^n_\alpha(t^n_i-) + \delta / \sqrt{n}])
				\right|   \mathcal{F}^n_{i-1} \right] \right)
		>  \frac{\sqrt{n} \epsilon}{2} 
	\right) \\ \nonumber
	&\le&
	\frac{\eta}{4}
	+
	\Prob \left(
		\sum_{i=1}^{\left\lfloor 2\mu nt \right\rfloor} \left(
			F_d(W^n_\alpha(t^n_i-) + \delta / \sqrt{n})
			-
			F_d(W^n_\alpha(t^n_i-) - \delta / \sqrt{n})
				\right)  		
		>  \frac{\sqrt{n} \epsilon}{2} 
	\right) \\   \label{eq:R0R1.2}
	&&+
	\Prob \left(
		\sum_{i=1}^{\left\lfloor 2\mu nt \right\rfloor} \left(
			1(d_i \in [W^n_\alpha(t^n_i-) - \delta / \sqrt{n}, W^n_\alpha(t^n_i-) + \delta / \sqrt{n}])
			\right. \right. \\ \nonumber 
	&& \qquad \left. \left.
			- \left( F_d(W^n_\alpha(t^n_i-) + \delta / \sqrt{n})
			-
			F_d(W^n_\alpha(t^n_i-) - \delta / \sqrt{n})
			\right)
			\right)
		>  \frac{\sqrt{n} \epsilon}{2} 
	\right) \\ \nonumber
	\end{eqnarray} 
	
	As for the second term on the far right hand side of \eqref{eq:R0R1.2}, 
		notice that each summand contributes an amount
		equal to the increase of the abandonment distribution function over an interval  of length $2 \delta$. 
	By \eqref{eq:R0R1.0.1} and  Lemma \ref{lem:theta2},
	\begin{eqnarray} \nonumber
	\lefteqn{
	\Prob \left(
		\sum_{i=1}^{\left\lfloor 2\mu nt \right\rfloor} \left(
			F_d(W^n_\alpha(t^n_i-) + \delta / \sqrt{n})
			-
			F_d(W^n_\alpha(t^n_i-) - \delta / \sqrt{n}])
				\right)  		
		>  \frac{\sqrt{n} \epsilon}{2} 
	\right)
	} \\ \nonumber
	&\le&
	\Prob \left(
		\sup_{s \in [0,t]}
		\tilde W^n(s) 
		> K
	\right)
	+
	1 \left(
		2\mu nt
		\sup_{s \in [0,K]}
		\left(
			F_d( (s+2 \delta)/\sqrt{n})
			-
			F_d(s/\sqrt{n})
		\right)
	> \frac{\sqrt{n} \epsilon}{2}
	\right) \\   \label{eq:R0R1.3}
	&<&
	\frac{\eta}{4}
	\end{eqnarray}
	for sufficiently large $n$.
	Now consider the third term on the right hand side of \eqref {eq:R0R1.2}.
	The following steps are similar to those in the proof of Proposition \ref{prop:centeredBalkingReneging}.
	By \eqref{eq:R0R1.0.1} and Lemma \ref{lem:theta},
	\begin{eqnarray} \nonumber
	\lefteqn{
	\Prob \left(
		\sum_{i=1}^{\left\lfloor 2\mu nt \right\rfloor} \left(
			1(d_i \in [W^n_\alpha(t^n_i-) - \delta / \sqrt{n}, W^n_\alpha(t^n_i-) + \delta / \sqrt{n}])
			\right. \right.  }  \quad \\ \nonumber 
	&\quad& \left. \left.
			- \left( F_d(W^n_\alpha(t^n_i-) + \delta / \sqrt{n})
			-
			F_d(W^n_\alpha(t^n_i-) - \delta / \sqrt{n})
			\right)
			\right)
		>  \frac{\sqrt{n} \epsilon}{2} 
	\right)  \\ \nonumber	
	&\le& 
	\frac{4}{n \epsilon^2}
	\Exp\left[ \left(
		\sum_{i=1}^{\left\lfloor 2\mu nt \right\rfloor}
		\left( 
			1(d_i \in [W^n_\alpha(t^n_i-) - \delta / \sqrt{n}, W^n_\alpha(t^n_i-) + \delta / \sqrt{n}])
			\right. \right. \right. \\ \nonumber
	&& \qquad
			\left. \left. \left.
			- \left( F_d(W^n_\alpha(t^n_i-) + \delta / \sqrt{n})
			-
			F_d(W^n_\alpha(t^n_i-) - \delta / \sqrt{n})
			\right)
		\right)
		\right)^2
	\right] \\ \nonumber	
	&\le& 
	\frac{4}{n \epsilon^2}
	\Exp\left[ 
		\sum_{i=1}^{\left\lfloor 2\mu nt \right\rfloor} \left(
			1(d_i \in [W^n_\alpha(t^n_i-) - \delta / \sqrt{n}, W^n_\alpha(t^n_i-) + \delta / \sqrt{n}])
			\right. \right.  \\ \nonumber
	&& \qquad
			\left. \left. 
			- \left( F_d(W^n_\alpha(t^n_i-) + \delta / \sqrt{n})
			-
			F_d(W^n_\alpha(t^n_i-) - \delta / \sqrt{n})
			\right)
		\right)^2
	\right] \\ \nonumber	
	&\le& 
	\frac{8}{n \epsilon^2}
	\Exp\left[ 
		\sum_{i=1}^{\left\lfloor 2\mu nt \right\rfloor}
			F_d(W^n_\alpha(t^n_i-) + \delta / \sqrt{n})
		\right]
	\\  \nonumber	
	&\le& 
	\frac{16\mu t}{\epsilon^2}
		\left(
		\Prob \left(
			\sup_{s \in [0,t]}
			W^n_\alpha(s)
			> K/\sqrt{n}
		\right)
		+
			F_d((K + \delta) / \sqrt{n})
	\right) \\  
	\label{eq:R0R1.4}
	&<& \frac{\eta}{4},
	\end{eqnarray}
	for sufficiently large $n$ greater than $ \left( \frac{256 \mu t \theta (K + \delta)}{\epsilon^2\eta}
		\right)^2$.
	
	The result follows from \eqref{eq:R0R1.0.2} -- \eqref{eq:R0R1.4}.
\end{proofof}

\begin{proofof}{Proposition \ref{prop:noInitialReneging}}
Fix $\eta, t >0$ and $\alpha \in \{S,T\}$. 
By Lemma \ref{lem:bounded} there exists a $K>1$ such that, for sufficiently large $n$,
$$
	\Prob \left( \sup_{s \in [0,t]} Q^n_\alpha(s) > K \sqrt{n} \right) 
	+
	\Prob \left( \sup_{s \in [0,t]} W^n_\alpha(s) > \frac{K}{\sqrt{n}} \right) < \frac{\eta}{2}.
$$
We use these constants to replace the workload and queue length:
\begin{eqnarray}   \label{proof:prop:noInitialReneging.1}
	\Prob \left(
		\hat{R}^n_\alpha(t) > L 
	\right)
	&\le&
	\Prob \left(
		\sum_{i=1}^{Q^n(0)}
			1(\hat{d}_i \le \hat{w}^n_{i-1} )
		> 
		L
	\right)
	\\ &\le& \nonumber
	\Prob \left(
		\sum_{i=1}^{\lfloor K \sqrt{n} \rfloor}
			1(\hat{d}_i \le K / \sqrt{n} )
		> 
		L
	\right)		
	+
	\Prob \left(Q^n(0) \ge K \sqrt{n} \right)
	+
	\Prob \left(W^n(0) > \frac{K}{\sqrt{n}} \right)
	\\ &<& \nonumber
	\Prob \left(
		\sum_{i=1}^{\lfloor K \sqrt{n} \rfloor}
			1(\hat{d}_i \le K / \sqrt{n} )
		> 
		L
	\right)
	+ 
	\frac{\eta}{2}
\end{eqnarray}	
for sufficiently large $n$.

Recall that the residual deadlines of the initial jobs may have different distributions, $\hat{F}_{d,i}$,  but those distributions 
	have a common bound near the origin.
Namely, there exists an $\hat{f} \ge 1$ and an $h_0 \in (0, 1/\hat{f})$ 
	such that $\hat{F}_{d,i}(h) \le h \hat{f}$ for each $h \le h_0$.
Given such an $h_0$ and $\hat{f}$, set $L$ so that $L \ge \max(2 \hat{f} K^2,4 \hat{f} K/  \sqrt{\eta})$.
We replace the initial jobs' residual deadlines with random variables which are i.i.d.
In particular, on the same probability space, each $\hat{d}_i$ is replaced by $\breve{d}_i$.
Whenever $\hat{d}_i < h_0$ then $\breve{d}_i \ge \hat{d}_i$. 
Moreover, the common distribution of each $\breve{d}_i$ is
$$
	\breve{F}(x) = \left\{ \begin{array}{ll}
		x \hat{f},  & x < h_0 \\
		1, & x \ge h_0.
	\end{array}
	\right.
$$
When $K/\sqrt{n} \le h_0$ we have that
$$
	\left( K \sqrt{n} \right)
	\breve{F}(K/\sqrt{n})
	=
	\hat{f} K^2
	< L/2
$$
so that by Kolmogorov's inequality \cite{ref:Bass},
\begin{eqnarray}    \label{proof:prop:noInitialReneging.2}
	\Prob \left(
		\sum_{i=1}^{\lfloor K \sqrt{n} \rfloor}
			1(\hat{d}_i \le K / \sqrt{n} )
		> 
		L
	\right)
	&\le&
	\Prob \left(
		\sum_{i=1}^{\lfloor K \sqrt{n} \rfloor}
			1(\breve{d}_i \le K / \sqrt{n} )
		> 
		L
	\right)
	\\&\le& \nonumber
	\Prob \left(
		\sum_{i=1}^{\lfloor K \sqrt{n} \rfloor}
			\left( 1(\breve{d}_i \le K / \sqrt{n} ) - \breve{F}(K/\sqrt{n}) \right)
		> 
		\frac{L}{2}
	\right)	
	\\&\le& \nonumber
	\left( \frac{4}{L^2} \right)
	(K \sqrt{n})
	\Exp[ \left( 1(\breve{d}_1 \le K / \sqrt{n} ) - \breve{F}(K/\sqrt{n}) \right)^2]
	\\&\le& \nonumber
	\left( \frac{4}{L^2} \right)
	(K \sqrt{n})	
	\breve{F}(K/\sqrt{n})	
	\\ &<& \frac{\eta}{2} \nonumber
\end{eqnarray}	
for sufficiently large $n$. 
The result follows from \eqref{proof:prop:noInitialReneging.1} and \eqref{proof:prop:noInitialReneging.2}.

\end{proofof}

\begin{proofof}{Proposition \ref{prop:deltaNegligible}}
We can break $\delta^n_\alpha$ into four parts. For every $t \ge 0$ and $\alpha \in \{S,T\},$
\begin{eqnarray} \nonumber \label{eq:deltaNegligible}
	\delta^n_\alpha(t)
	&=&
	\frac{1}{\sqrt{n}}
	\sum_{i=1}^{A^n(t)}
		F_b(Q^n_\alpha(t^n_i-)/\mu^n)
		-
		F_b(Q^n_\alpha(t^n_i-)/(\mu n))
	+
		F_d(Q^n_\alpha(t^n_i-)/\mu^n)
		-
		F_d(Q^n_\alpha(t^n_i-)/(\mu n))
	\\ \nonumber
	&&+
	\frac{1}{\sqrt{n}}
	\sum_{i=1}^{A^n(t)}
	\left(
		F_b(Q^n_\alpha(t^n_i-)/(\mu n))
		+
		F_d(Q^n_\alpha(t^n_i-)/(\mu n))
		-
		\theta
		\frac{Q^n_\alpha(t^n_i-)}{\mu n}
	\right) \\ \nonumber
	&&+
	\theta \left(
		\int_0^t
			\tilde Q^n_\alpha(s-) d \left(\frac{\bar A^n(s)}{\mu} \right)
		-
		\int_0^t
			\tilde Q^n_\alpha(s) ds
	\right)
	\\&&+
	\frac{1}{\sqrt{n}} \left(
		\hat{S}^n(t) - \mu^n \min(t,W^n(0))
	\right).
\end{eqnarray}
We will show that each of these converges to zero in probability as $n \to \infty$.	
			
Fix $t>0$ and $\alpha \in \{S,T\}$ and select arbitrary constants $\epsilon, \eta>0$.
We first show that both
\begin{equation} \label{eq:deltaNegligible.1a}
	\limsup_{n \to \infty}
	\Prob \left(	
		\frac{1}{\sqrt{n}}
		\sum_{i=1}^{A^n(t)}
		\left|
			F_b(Q^n_\alpha(t^n_i-)/(\mu n))
			+
			F_b(Q^n_\alpha(t^n_i-)/(\mu n))
		\right|
		>
		\frac{\epsilon}{4}
	\right)
	<
	\frac{\eta}{4}
\end{equation}
and
\begin{equation} \label{eq:deltaNegligible.1b}
	\limsup_{n \to \infty}
	\Prob \left(	
		\frac{1}{\sqrt{n}}
		\sum_{i=1}^{A^n(t)}
		\left|
			F_d(Q^n_\alpha(t^n_i-)/(\mu n))
			+
			F_d(Q^n_\alpha(t^n_i-)/(\mu n))
		\right|
		>
		\frac{\epsilon}{4}
	\right)
	<
	\frac{\eta}{4}.
\end{equation}

We will prove \eqref{eq:deltaNegligible.1a} and the proof of \eqref{eq:deltaNegligible.1b} follows trivially.
The right hand side of  \eqref{eq:deltaNegligible.1a} can be expanded:
\begin{eqnarray}  \label{eq:deltaNegligible.1a.1}
\lefteqn{
	\Prob \left(	
		\frac{1}{\sqrt{n}}
		\sum_{i=1}^{A^n(t)}
		\left|
			F_b(Q^n_\alpha(t^n_i-)/(\mu n))
			+
			F_b(Q^n_\alpha(t^n_i-)/(\mu n))
		\right|
		>
		\frac{\epsilon}{4}
	\right)
} && \\ 	&\le& \nonumber
	\Prob \left(	
		A^n(t)
		\left(
		\sup_{s \in [0,t]}
			\left|
				F_b(Q^n_\alpha(s)/\mu^n)
				+
				F_b(Q^n_\alpha(s)/(\mu n))
			\right|
		\right)
		>
		\frac{\epsilon \sqrt{n}}{4}
	\right)
\\ \nonumber
	&\le&
	\Prob \left(	
		\sup_{s \in [0,t]}
				Q^n_\alpha(s)
		>
		K \sqrt{n}
	\right)
	+
	\Prob \left(	
		A^n(t)
		>
		2 \mu n t
	\right) 
	+
	\Prob \left(
		\sup_{s \le K}
			\left|
				F_b(s\sqrt{n}/(\mu n))
				+
				F_b(s\sqrt{n}/(\mu n) )
			\right|
		>
		\frac{\epsilon}{8 \mu t \sqrt{n}}
	\right).
\end{eqnarray}
Fix $K>0$ so that by \eqref{eq:lemBoundedQueue} of Lemma \ref{lem:bounded},
\begin{equation} \label{eq:deltaNegligible.1a.2}
	\Prob \left(	
		\sup_{s \in [0,t]}
				Q^n_\alpha(s)
		>
		K \sqrt{n}
	\right)
	<
	\frac{\eta}{8}.
\end{equation}

Notice that for any $\delta>0$ we have that
	$$
	\sup_{s \le K}
		\left| 
			\frac{s\sqrt{n}}{\mu n} 
			-
			\frac{s\sqrt{n}}{\mu n}
		\right|
		\le 
		\left| 
			\frac{K\sqrt{n}}{\mu^n} 
			-
			\frac{K\sqrt{n}}{\mu n}
		\right|
		<
		\frac{\delta}{\sqrt{n}}
	$$
for sufficiently large $n$.
Set $\delta = \epsilon/(16 \mu t \theta).$
The first result, \eqref{eq:deltaNegligible.1a}, follows from \eqref{eq:deltaNegligible.1a.1}, 
	 \eqref{eq:deltaNegligible.1a.2},  and  Lemmas \ref{lem:boundedArrivals}  and  \ref{lem:theta2}.
Likewise,  \eqref{eq:deltaNegligible.1b} follows from an identical argument.

Next we show that
\begin{equation} \label{eq:deltaNegligible.2}
	\limsup_{n \to \infty}
	\Prob \left(	
	\frac{1}{\sqrt{n}}
	\sum_{i=1}^{A^n(t)}
	\left|
		F_b(Q^n_\alpha(t^n_i-)/(\mu n))
		+
		F_d(Q^n_\alpha(t^n_i-)/(\mu n))
		-
		\theta
		\frac{Q^n_\alpha(t^n_i-)}{\mu n}
	\right|
		>
		\frac{\epsilon}{4}
	\right)
	<
	\frac{\eta}{4}.
\end{equation}
We explore the derivative of the abandonment and balking distributions:
\begin{eqnarray} \label{eq:deltaNegligible.2.1}
\lefteqn{
	\Prob \left(	
	\frac{1}{\sqrt{n}}
	\sum_{i=1}^{A^n(t)}
	\left|
		F_b(Q^n_\alpha(t^n_i-)/(\mu n))
		+
		F_d(Q^n_\alpha(t^n_i-)/(\mu n))
		-
		\theta
		\frac{Q^n_\alpha(t^n_i-)}{\mu n}
	\right|
		>
		\frac{\epsilon}{4}
	\right)
} &&\\ \nonumber
	&\le&
	\Prob \left(	
		A^n(t)
		\sup_{s \in [0,t]}
			\left|
				F_b(Q^n_\alpha(s)/(\mu n))
				+
				F_d(Q^n_\alpha(s)/(\mu n))
				-
				\theta \frac{ Q^n_\alpha(s)}{\mu n}
			\right|
		>
		\frac{\epsilon \sqrt{n}}{4}
	\right) \\ \nonumber
	&\le&
	\Prob \left(	
		\sup_{s \in [0,t]}
				Q^n_\alpha(s)
		>
		K \sqrt{n}
	\right)
	+
	\Prob \left(	
		A^n(t)
		>
		2 \mu n t
	\right) 
	 \\ \nonumber
	&&+
	1 \left(
	\sup_{s \in [0,K/\mu]}
	\left|
		F_b(s/ \sqrt{n})
		+
		F_d(s/ \sqrt{n})
		-
		\frac{\theta s}{\sqrt{n}}
	\right|
	>
	\frac{\epsilon}{8 \mu t \sqrt{n}}
	\right) 
\end{eqnarray}
Recall that the derivatives at zero of both $F_b$ and $F_d$ exist and sum to $\theta$. 
Hence, for a any given  $\delta>0$ there exists an $h_0$ such that 
$$
	\sup_{s \le h} 
	\left| 
		\frac{F_b(s)}{s} + \frac{F_d(s)}{s} - \theta
	\right|
	<
	\delta
$$
for all $h \le h_0$.
Let $\delta = \frac{\epsilon}{8Kt}$ 
	and let $h_0$ be a constant so that the above inequality holds.
Now choose an $n_0$ such that  $ K/(\mu \sqrt{n}) \le h_0$ for all $n \ge n_0$.
It follows that for each $n \ge n_0$,
\begin{equation} \label{eq:deltaNegligible.2.2}
	\sup_{s \in [0,K/(\mu \sqrt{n})]}
	\left| 
		F_b(s) + F_d(s) - \theta s
	\right|
	=
	\sup_{s \in [0,K/\mu]}
	\left| 
		F(s/\sqrt{n}) - 
		\frac{\theta s}{\sqrt{n}}
	\right|
	< \delta \frac{K}{\mu \sqrt{n}}
	=
 	\frac{\epsilon}{8 \mu t \sqrt{n}}.
\end{equation}
The result \eqref{eq:deltaNegligible.2} follows from  \eqref{eq:deltaNegligible.1a.2}, \eqref{eq:deltaNegligible.2.1}, 	
	\eqref{eq:deltaNegligible.2.2} and Lemma \ref{lem:boundedArrivals}.

Third, we show that 
	\begin{equation} \label{eq:deltaNegligible.3}
	\limsup_{n \to \infty}
	\Prob \left(
	 \left|
		\int_0^t
			\tilde Q^n_\alpha(s-) d \left(\frac{\bar A^n(s)}{\mu} \right)
		-
		\int_0^t
			\tilde Q^n_\alpha(s) ds
	\right|
	> \epsilon
	\right)
	< 
	\eta.
	\end{equation}
By Proposition \ref{prop:tightness}, the process $\{\tilde Q^n_\alpha(s), s \le t\}$ is tight. 
Consider a subsequence $\{n^\prime\}$ over which the process $\tilde Q^{n^\prime}_\alpha$ has a limit, 
	say $\tilde Q_\alpha$.
By the Skorohod Representation Theorem, there exists an alternative probability space on which 
	are defined a sequence $\{(\hat{\tilde Q}^{n^\prime}_\alpha, \hat{\bar A}^{n^\prime}_\alpha), n \ge 1\}$
	and, by Lemma \ref{lem:uniformArrivals}, a limit process $(\hat{\tilde Q}_\alpha, \hat{\bar A}_\alpha)$ such that
	$$
		(\hat{\tilde Q}^{n^\prime}_\alpha, \hat{\bar A}^{n^\prime}_\alpha) {\buildrel D \over = } (\tilde Q^{n^\prime}_\alpha, \bar A^{n^\prime}_\alpha)
	$$
	for each $n^\prime$ and such that
	$(\hat{\tilde Q}^{n^\prime}_\alpha, \hat{\bar A}^{n^\prime}_\alpha) \to (\hat{\tilde Q}_\alpha, \hat{\bar A}_\alpha)$
		almost surely as $n^\prime \to \infty$.
	It is also true that
	$$
		\int_0^u
			\hat{\tilde Q}^{n^\prime}_\alpha(s-) 
				d \left(\frac{\hat{\bar A}^{n^\prime} (s)}{\mu} \right)  
		{\buildrel D \over = }
		\int_0^u
			\tilde Q^{n^\prime}_\alpha(s-) 
				d \left(\frac{\bar A^{n^\prime} (s)}{\mu} \right) ,
	$$
	and
	$$		
		\int_0^t
			\hat{\tilde Q}^{n^\prime}_\alpha(s) ds
		{\buildrel D \over = }
		\int_0^t
			\tilde Q^{n^\prime}_\alpha(s) ds
	$$	
		for each $n^\prime$.
Applying Lemma 8.3 from \cite{ref:DD} twice we have
	$$
	\sup_{u \in [0,t]} 
	\left|
		\int_0^u
			\hat{\tilde Q}^{n^\prime}_\alpha(s-) 
				d \left(\frac{\hat{\bar A}^{n^\prime} (s)}{\mu} \right)  
		-
		\int_0^u \hat {\tilde{Q}} (s) ds
	\right|
	\to 
	0
	$$
	and
	$$
	\sup_{u \in [0,t]} 
	\left|
		\int_0^t
			\hat{\tilde Q}^{n^\prime}_\alpha(s) ds
		-
		\int_0^t \hat {\tilde{Q}} (s) ds
	\right|
	\to 0
	$$
almost surely as $n^\prime \to \infty$, so that
$$
	\sup_{u \in [0,t]} 
	\left|
		\int_0^u
			\hat{\tilde Q}^{n^\prime}_\alpha(s-) 
				d \left(\frac{\hat{\bar A}^{n^\prime} (s)}{\mu} \right)  
		-
		\int_0^t
			\hat{\tilde Q}^{n^\prime}_\alpha(s) ds
	\right|		
	\to 0
$$
almost surely as $n^\prime \to \infty$.
	It follows that in our original probability space
$$
	\sup_{u \in [0,t]} 
	\left|
		\int_0^u
			\tilde Q^{n^\prime}_\alpha(s-) 
				d \left(\frac{\hat{\bar A}^{n^\prime} (s)}{\mu} \right)  
		-
		\int_0^t
			\tilde Q^{n^\prime}_\alpha(s) ds
	\right|		
	\to 0
$$	
as $n^\prime \to \infty$.
This limit holds on the arbitrarily chosen subsequence $\{n^\prime\}$.
Hence \eqref{eq:deltaNegligible.3} holds. 

Consider the fourth term on the right hand side of \eqref{eq:deltaNegligible}. The service times of initial jobs that are actually served constitute an i.i.d sequence.  This sequence obeys a weak law of large numbers, as does the renewal process, $\hat{S}^n$,  constructed by these service times over intervals of time that are of the order $1/\sqrt{n}$.  Hence, by Proposition \ref{prop:tightness} 

\begin{equation} \label{eq:deltaNegligible.4}
	\Prob \left(
		\sup_{t \le W^n(0)}
		\left|
			\hat{S}^n(t) 
			-
			\mu_n t
		\right|
		>
		\epsilon \sqrt{n}
	\right)
	< \Prob \left( \sup_{t \le K / \sqrt{n}} \left| \hat{S}^n(t) - \mu_n t \right| > \epsilon \sqrt{n} \right) + 
	\Prob\left( W^n(0) > K / \sqrt{n} \right) 
	<
	\eta,
\end{equation}
 holds for sufficiently large $n$.

Finally, our result follows from \eqref{eq:deltaNegligible}, \eqref{eq:deltaNegligible.1a}, \eqref{eq:deltaNegligible.1b}, 
	\eqref{eq:deltaNegligible.2},  \eqref{eq:deltaNegligible.3}, and
	Lemma \ref{lem:boundedArrivals}.
\end{proofof}

\subsection{Numerical Tables}

\begin{table}[ht]
\captionsetup{justification=centering}
 \caption{Simulated Results vs. Heavy Traffic Approximations 
Arrivals = Exp($\mu + \sqrt{\mu} \cdot \beta $), Service = Exp($\mu$), Balking = Exp($\theta_b$), 
Reneging = Exp($\theta_r$) }
\centering
\scalebox{0.8}{
\begin{tabular}{c c c c c c c c c c c c c c c }
\hline\hline
$Q_{\alpha}$ & $\rho$ & $\mu$ & $\beta$ & $\theta_b$ & $\theta_r$ & Q & $Q_{ROU}$ & W & $W_{ROU}$ & R  & $R_{ROU}$   & B & $B_{ROU}$  \\ [0.5ex] 
\hline
$Q_S$    	&1.1    	&100        	&1            	&.1 		&.1                    	     & 49.48 $\pm$ .199 & 50.74 		& .481 $\pm$ .002 & .5074 		& .044 $\pm$ .0002 & .0461 		& .048 $\pm$ .0002 & .0461 	\\
$Q_T$    	&1.1   	&100       	&1            	&.1 		&.1                   	     & 50.18 $\pm$ .203 & 50.74 		& .475 $\pm$ .002 & .5074 		& .043 $\pm$ .0002 & .0461 		& .049 $\pm$ .0002 & .0461 	\\[1ex]

$Q_S$    	&1.1    	&25          	&.5           	&.1 		&.1                   	     & 15.25 $\pm$ .071 & 15.24 		& .587 $\pm$ .003 & .6099 		& .0519 $\pm$ .0002 & .0554 		& .0585 $\pm$ .0003 & .0554	\\
$Q_T$    	&1.1   	&25    	&.5           	&.1 		&.1  			     & 15.59 $\pm$ .073 & 15.24 		& .578 $\pm$ .003 & .6099 		& .0510 $\pm$ .0002 & .0554 		& .0597 $\pm$ .0003 & .0554	\\[1ex]

$Q_S$    	&1.1   	&4      	&.2          	&.1 		&.1 			     & 4.47 $\pm$ .021 & 4.40 		& 1.04 $\pm$ .0006 & 1.100 	& .0813 $\pm$ .0004 & .1000 		& .102 $\pm$ .0006 & .1000 	\\ 
$Q_T$    	&1.1   	&4        	&.2          	&.1 		&.1 			     & 4.67 $\pm$ .023 & 4.40 		& 1.02 $\pm$ .0006 & 1.100 	& .0791 $\pm$ .0004 & .1000  		& .106 $\pm$ .0006 & .1000      \\[1ex] \hline 

$Q_S$    	&1.01  	&100        	&.1         	&.1 		&.1 			     & 19.79 $\pm$ .094 & 19.78 		& .195 $\pm$ .001 & .1978 		& .0186 $\pm$ .0001 & .0195 		& .0195 $\pm$ .0001 & .0195     \\
$Q_T$    	&1.01 	&100        	&.1         	&.1 		&.1 			     & 19.98 $\pm$ .095 & 19.78 		& .194 $\pm$ .001 & .1978 		& .0186 $\pm$ .0001 & .0195 		& .0196 $\pm$ .0001 & .0195	\\[1ex]

$Q_S$    	&1.01  	&25          	&.05       	&.1 		&.1 			     & 9.34 $\pm$ .045 & 9.39 		& .363 $\pm$ .0018 & .3956	& .0333 $\pm$ .0002 & .03718 		& .0362 $\pm$ .0002 & .03718	\\
$Q_T$    	&1.01 	&25          	&.05       	&.1 		&.1 			     & 9.51 $\pm$ .046 & 9.39 		& .360 $\pm$ .0018 & .3956 	& .0330 $\pm$ .0002 & .03718		& .0368 $\pm$ .0002 & .03718	\\[1ex]

$Q_S$    	&1.01  	&4            	&.02       	&.1 		&.1 			     & 3.61 $\pm$ .019 & 3.64 		& .855 $\pm$ .005 & .9104 		& .0681 $\pm$ .0005 & .0901 		& .0843 $\pm$ .0006 & .0901	\\
$Q_T$    	&1.01 	&4            	&.02       	&.1 		&.1 			     & 3.77 $\pm$ .020 & 3.64 		& .842 $\pm$ .005 & .9104 		& .0667 $\pm$ .0004 & .0901		& .0873 $\pm$ .0006 & .0901	\\[1ex]\hline

$Q_S$    	&1       	&100       	&0           	&.1 		&.1 			     & 17.82 $\pm$ .083 & 17.84 		& .176 $\pm$ .0009 & .1784	& .0168 $\pm$ .0001 & .0178 		& .0176 $\pm$ .0001 & .0178	\\
$Q_T$    	&1       	&100      	&0           	&.1 		&.1 			     & 17.98 $\pm$ .085 & 17.84 		& .175 $\pm$ .0009 & .1784 	& .0168 $\pm$ .0001 & .0178 		& .0177 $\pm$ .0001 & .0178	\\[1ex]

$Q_S$    	&1       	&25         	&0           	&.1 		&.1 			     & 8.86 $\pm$ .046 & 8.92 		& .345 $\pm$ .002 & .3568 		& .0317 $\pm$ .0002 & .0356 		& .0344 $\pm$ .0002 & .0356	\\
$Q_T$    	&1       	&25        	&0           	&.1 		&.1 			     & 9.03 $\pm$ .047 & 8.92 		& .343 $\pm$ .002 & .3568 		& .0314 $\pm$ .0002 & .0356 		& .0350 $\pm$ .0002 & .0356	\\[1ex]

$Q_S$    	&1       	&4           	&0           	&.1 		&.1 			     & 3.52 $\pm$ .019 & 3.56 		& .832 $\pm$ .005 & .8920 		& .0670 $\pm$ .0005 & .0892 		& .0817 $\pm$ .0005 & .0892	\\
$Q_T$    	&1       	&4          	&0           	&.1 		&.1 			     & 3.67 $\pm$ .021 & 3.56 		& .820 $\pm$ .005 & .8920 		& .0657 $\pm$ .0005 & .0892 		& .0848 $\pm$ .0005 & .0892	\\[1ex]
\hline

$Q_S$    	&.99    	&100        	&-.1           &.1 		& .1                    	     & 15.95 $\pm$ .067 & 16.14 		& .157 $\pm$ .0007 & .1614 	& .0152 $\pm$ .0001 & .0163 		& .0157 $\pm$ .0001 & .0163 	\\
$Q_T$    	&.99   	&100       	&-.1           &.1 		& .1                   	     & 16.10 $\pm$ .068 & 16.14 		& .157 $\pm$ .0007 & .1614 	& .0151 $\pm$ .0001 & .0163		& .0159 $\pm$ .0001 & .0163 	\\[1ex]

$Q_S$    	&.99    	&25          	&-.05         	&.1 		& .1                   	     & 8.40 $\pm$ .046 & 8.48 		& .328 $\pm$ .002 & .3392 		& .0302 $\pm$ .0002 & .0342 		& .0326 $\pm$ .0002 & .0342	\\
$Q_T$    	&.99   	&25    	&-.05         &.1 		& .1  			     & 8.55 $\pm$ .046 & 8.48 		& .325 $\pm$ .002 & .3392 		& .0299 $\pm$ .0002 & .0342 		& .0332 $\pm$ .0002 & .0342	\\[1ex]

$Q_S$    	&.99   	&4      	&-.02         &.1 		& .1 			     & 3.44 $\pm$ .020 & 3.49 		& .814 $\pm$ .0058 & .8741 	& .0657 $\pm$ .0005 & .0882 		& .0801 $\pm$ .0005 & .0882	\\ 
$Q_T$    	&.99   	&4        	&-.02         &.1 		& .1 			     & 3.58 $\pm$ .022 & 3.49 		& .803 $\pm$ .0055 & .8741 	& .0644 $\pm$ .0005 & .0882 		& .0830 $\pm$ .0005 & .0882     \\[1ex]\hline

$Q_S$    	&.9  		&100        	&-1         	&.1 		& .1 			     & 7.12 $\pm$ .0264 & 7.77 		& .0708 $\pm$ .0003 & .0777 	& .00692 $\pm$ .0001 & .0086 		& .0071 $\pm$ .0001 & .0086      \\
$Q_T$    	&.9 		&100        	&-1         	&.1 		& .1 			     & 7.16 $\pm$ .0267 & 7.77 		& .0707 $\pm$ .0003 & .0777	& .00691 $\pm$ .0001 & .0086  		& .0071 $\pm$ .0001 & .0086 	\\[1ex]

$Q_S$    	&.9  		&25          	&-.5      	&.1 		& .1 			     & 5.19 $\pm$ .024 & 5.61 		& .204 $\pm$ .001 & .2246		& .0192 $\pm$ .0001 & .0249 		& .0204 $\pm$ .0001 & .0249	\\
$Q_T$    	&.9 		&25          	&-.5       	&.1 		& .1 			     & 5.27 $\pm$ .024 & 5.61 		& .203 $\pm$ .001 & .2246 		& .0192 $\pm$ .0001 & .0249		& .0207 $\pm$ .0001 & .0249	\\[1ex]

$Q_S$    	&.9  		&4            	&-.2       	&.1 		& .1 			     & 2.74 $\pm$ .013 & 2.93 		& .653 $\pm$ .0036 & .7328 	& .0542 $\pm$ .0004 & .0814 		& .0645 $\pm$ .0005 & .0814	\\
$Q_T$    	&.9 		&4            	&-.2       	&.1 		& .1 			     & 2.84 $\pm$ .014 & 2.93 		& .647 $\pm$ .0035 & .7328 	& .0534 $\pm$ .0004 & .0814 		& .0667 $\pm$ .0005 & .0814	\\[1ex]\hline

$Q_S$    	&.8      	&100       	&-2           	&.1 		& .1 			     & 3.73 $\pm$ .0098 & 4.59 		& .0372 $\pm$ .0001 & .0459	& .0037 $\pm$ .0001 & .0057 		& .0037 $\pm$ .0001 & .0057 	\\
$Q_T$    	&.8       	&100      	&-2           	&.1 		& .1 			     & 3.75 $\pm$ .0099 & 4.59 		& .0372 $\pm$ .0001 & .0459 	& .0037 $\pm$ .0001 & .0057 		& .0037 $\pm$ .0001 & .0057 	\\[1ex]

$Q_S$    	&.8       	&25         	&-1           	&.1 		& .1 			     & 3.22 $\pm$ .011 & 3.88 		& .127 $\pm$ .0005 & .1555 	& .0122 $\pm$ .0001 & .0194 		& .0127 $\pm$ .0001 & .0194	\\
$Q_T$    	&.8       	&25        	&-1           	&.1 		& .1 			     & 3.25 $\pm$ .012 & 3.88 		& .127 $\pm$ .0005 & .1555 	& .0122 $\pm$ .0001 & .0194 		& .0129 $\pm$ .0001 & .0194	\\[1ex]

$Q_S$    	&.8      	&4           	&-.4           	&.1 		& .1 			     & 2.11 $\pm$ .012 & 2.44 		& .508 $\pm$ .0036 & .6113 	& .0432 $\pm$ .0003 & .0764 		& .0496 $\pm$ .0003 & .0764	\\
$Q_T$    	&.8       	&4          	&-.4           	&.1 		& .1 			     & 2.18 $\pm$ .013 & 2.44 		& .505 $\pm$ .0035 & .6113 	& .0428 $\pm$ .0003 & .0764 		& .0511 $\pm$ .0003 & .0764	\\[1ex]
\hline
\end{tabular}}
\label{tab:ticket_1}
\end{table}

\begin{table}[ht]
\captionsetup{justification=centering}
 \caption{Simulated Results vs. Heavy Traffic Approximations 
Arrivals = Exp($\mu + \sqrt{\mu} \cdot \beta $), Service = LogNormal($1/\mu$,1/$\mu^2$), \\
Balking = Uniform(0,$1/\theta_b$), Reneging = Uniform(0,$1/\theta_r$) }
\centering
\scalebox{0.8}{
\begin{tabular}{c c c c c c c c c c c c c c c }
\hline\hline
$Q_{\alpha}$ & $\rho$ & $\mu$ & $\beta$ & $\theta_b$ & $\theta_r$ & Q & $Q_{ROU}$ & W & $W_{ROU}$ & R  & $R_{ROU}$   & B & $B_{ROU}$  \\ [0.5ex] 
\hline
$Q_S$    	&1.1    	&100        	&1            	&.1 		&.1                    	     & 48.17 $\pm$ .174 & 50.74 		& .468 $\pm$ .002 & .5074 		& .044 $\pm$ .0002 & .0461 		& .048 $\pm$ .0002 & .0461 	\\
$Q_T$    	&1.1   	&100       	&1            	&.1 		&.1                   	     & 48.85 $\pm$ .177 & 50.74 		& .462 $\pm$ .002 & .5074 		& .043 $\pm$ .0002 & .0461 		& .049 $\pm$ .0002 & .0461 	\\[1ex]

$Q_S$    	&1.1    	&25          	&.5           	&.1 		&.1                   	     & 14.65 $\pm$ .073 & 15.24 		& .563 $\pm$ .003 & .6099 		& .0517 $\pm$ .0002 & .0554 		& .0587 $\pm$ .0003 & .0554	\\
$Q_T$    	&1.1   	&25    	&.5           	&.1 		&.1  			     & 14.97 $\pm$ .076 & 15.24 		& .555 $\pm$ .003 & .6099 		& .0508 $\pm$ .0002 & .0554 		& .0599 $\pm$ .0003 & .0554	\\[1ex]

$Q_S$    	&1.1   	&4      	&.2          	&.1 		&.1 			     & 4.17 $\pm$ .022 & 4.40 		& .968 $\pm$ .0006 & 1.100 	& .0813 $\pm$ .0004 & .1000 		& .104 $\pm$ .0006 & .1000 	\\ 
$Q_T$    	&1.1   	&4        	&.2          	&.1 		&.1 			     & 4.35 $\pm$ .024 & 4.40 		& .946 $\pm$ .0006 & 1.100 	& .0786 $\pm$ .0004 & .1000  		& .108 $\pm$ .0006 & .1000      \\[1ex] \hline 

$Q_S$    	&1.01  	&100        	&.1         	&.1 		&.1 			     & 19.35 $\pm$ .086 & 19.78 		& .191 $\pm$ .0009 & .1978 	& .0185 $\pm$ .0001 & .0195 		& .0194 $\pm$ .0001 & .0195     \\
$Q_T$    	&1.01 	&100        	&.1         	&.1 		&.1 			     & 19.55 $\pm$ .087 & 19.78 		& .189 $\pm$ .0009 & .1978 	& .0184 $\pm$ .0001 & .0195 		& .0196 $\pm$ .0001 & .0195	\\[1ex]

$Q_S$    	&1.01  	&25          	&.05       	&.1 		&.1 			     & 9.04 $\pm$ .047 & 9.39 		& .352 $\pm$ .0020 & .3956	& .0331 $\pm$ .0002 & .03718 		& .0362 $\pm$ .0002 & .03718	\\
$Q_T$    	&1.01 	&25          	&.05       	&.1 		&.1 			     & 9.22 $\pm$ .048 & 9.39 		& .349 $\pm$ .00189 & .3956 	& .0328 $\pm$ .0002 & .03718		& .0369 $\pm$ .0002 & .03718	\\[1ex]

$Q_S$    	&1.01  	&4            	&.02       	&.1 		&.1 			     & 3.40 $\pm$ .019 & 3.64 		& .795 $\pm$ .006 & .9104 		& .0684 $\pm$ .0005 & .0901 		& .0849 $\pm$ .0005 & .0901	\\
$Q_T$    	&1.01 	&4            	&.02       	&.1 		&.1 			     & 3.54 $\pm$ .020 & 3.64 		& .792 $\pm$ .005 & .9104 		& .0667 $\pm$ .0005 & .0901		& .0884 $\pm$ .0006 & .0901	\\[1ex]\hline

$Q_S$    	&1       	&100       	&0           	&.1 		&.1 			     & 17.45 $\pm$ .082 & 17.84 		& .172 $\pm$ .0009 & .1784	& .0167 $\pm$ .0001 & .0178 		& .0174 $\pm$ .0001 & .0178	\\
$Q_T$    	&1       	&100      	&0           	&.1 		&.1 			     & 17.62 $\pm$ .084 & 17.84 		& .171 $\pm$ .0009 & .1784 	& .0167 $\pm$ .0001 & .0178 		& .0176 $\pm$ .0001 & .0178	\\[1ex]

$Q_S$    	&1       	&25         	&0           	&.1 		&.1 			     & 8.62 $\pm$ .037 & 8.92 		& .335 $\pm$ .002 & .3568 		& .0317 $\pm$ .0002 & .0356 		& .0344 $\pm$ .0002 & .0356	\\
$Q_T$    	&1       	&25        	&0           	&.1 		&.1 			     & 8.78 $\pm$ .038 & 8.92 		& .333 $\pm$ .002 & .3568 		& .0314 $\pm$ .0002 & .0356 		& .0351 $\pm$ .0002 & .0356	\\[1ex]

$Q_S$    	&1       	&4           	&0           	&.1 		&.1 			     & 3.30 $\pm$ .016 & 3.56 		& .773 $\pm$ .006 & .8920 		& .0663 $\pm$ .0005 & .0892 		& .0825 $\pm$ .0005 & .0892	\\
$Q_T$    	&1       	&4          	&0           	&.1 		&.1 			     & 3.44 $\pm$ .017 & 3.56 		& .761 $\pm$ .005 & .8920 		& .0647 $\pm$ .0005 & .0892 		& .0860 $\pm$ .0005 & .0892	\\[1ex]
\hline

$Q_S$    	&.99    	&100        	&-.1           &.1 		& .1                    	     & 15.78 $\pm$ .077 & 16.14 		& .156 $\pm$ .0008 & .1614 	& .0152 $\pm$ .0001 & .0163 		& .0157 $\pm$ .0001 & .0163 	\\
$Q_T$    	&.99   	&100       	&-.1           &.1 		& .1                   	     & 15.92 $\pm$ .078 & 16.14 		& .155 $\pm$ .0008 & .1614 	& .0151 $\pm$ .0001 & .0163		& .0159 $\pm$ .0001 & .0163 	\\[1ex]

$Q_S$    	&.99    	&25          	&-.05         	&.1 		& .1                   	     & 8.18 $\pm$ .040 & 8.48 		& .319 $\pm$ .002 & .3392 		& .0301 $\pm$ .0002 & .0342 		& .0327 $\pm$ .0002 & .0342	\\
$Q_T$    	&.99   	&25    	&-.05         &.1 		& .1  			     & 8.32 $\pm$ .042 & 8.48 		& .317 $\pm$ .002 & .3392 		& .0298 $\pm$ .0002 & .0342 		& .0333 $\pm$ .0002 & .0342	\\[1ex]

$Q_S$    	&.99   	&4      	&-.02         &.1 		& .1 			     & 3.24 $\pm$ .018 & 3.49 		& .761 $\pm$ .006 & .8741 		& .0655 $\pm$ .0005 & .0882 		& .0812 $\pm$ .0005 & .0882	\\ 
$Q_T$    	&.99   	&4        	&-.02         &.1 		& .1 			     & 3.38 $\pm$ .019 & 3.49 		& .750 $\pm$ .006 & .8741 		& .0640 $\pm$ .0004 & .0882 		& .0846 $\pm$ .0005 & .0882     \\[1ex]\hline

$Q_S$    	&.9  		&100        	&-1         	&.1 		& .1 			     & 7.09 $\pm$ .0252 & 7.77 		& .0705 $\pm$ .0003 & .0777 	& .00694 $\pm$ .0001 & .0086 		& .0071 $\pm$ .0001 & .0086      \\
$Q_T$    	&.9 		&100        	&-1         	&.1 		& .1 			     & 7.13 $\pm$ .0256 & 7.77 		& .0704 $\pm$ .0003 & .0777	& .00693 $\pm$ .0001 & .0086  		& .0071 $\pm$ .0001 & .0086 	\\[1ex]

$Q_S$    	&.9  		&25          	&-.5      	&.1 		& .1 			     & 5.10 $\pm$ .027 & 5.61 		& .200 $\pm$ .001 & .2246		& .0193 $\pm$ .0001 & .0249 		& .0205 $\pm$ .0001 & .0249	\\
$Q_T$    	&.9 		&25          	&-.5       	&.1 		& .1 			     & 5.18 $\pm$ .028 & 5.61 		& .200 $\pm$ .001 & .2246 		& .0192 $\pm$ .0001 & .0249		& .0207 $\pm$ .0001 & .0249	\\[1ex]

$Q_S$    	&.9  		&4            	&-.2       	&.1 		& .1 			     & 2.61 $\pm$ .015 & 2.93 		& .619 $\pm$ .004 & .7328 		& .0541 $\pm$ .0004 & .0814 		& .0655 $\pm$ .0005 & .0814	\\
$Q_T$    	&.9 		&4            	&-.2       	&.1 		& .1 			     & 2.71 $\pm$ .016 & 2.93 		& .612 $\pm$ .004 & .7328 		& .0532 $\pm$ .0004 & .0814 		& .0680 $\pm$ .0005 & .0814	\\[1ex]\hline

$Q_S$    	&.8      	&100       	&-2           	&.1 		& .1 			     & 3.72 $\pm$ .0098 & 4.59 		& .0371 $\pm$ .0001 & .0459	& .0037 $\pm$ .0001 & .0057 		& .0037 $\pm$ .0001 & .0057 	\\
$Q_T$    	&.8       	&100      	&-2           	&.1 		& .1 			     & 3.74 $\pm$ .0099 & 4.59 		& .0371 $\pm$ .0001 & .0459 	& .0037 $\pm$ .0001 & .0057 		& .0037 $\pm$ .0001 & .0057 	\\[1ex]

$Q_S$    	&.8       	&25         	&-1           	&.1 		& .1 			     & 3.19 $\pm$ .013 & 3.88 		& .126 $\pm$ .0006 & .1555 	& .0123 $\pm$ .0001 & .0194 		& .0127 $\pm$ .0001 & .0194	\\
$Q_T$    	&.8       	&25        	&-1           	&.1 		& .1 			     & 3.22 $\pm$ .014 & 3.88 		& .126 $\pm$ .0006 & .1555 	& .0122 $\pm$ .0001 & .0194 		& .0129 $\pm$ .0001 & .0194	\\[1ex]

$Q_S$    	&.8      	&4           	&-.4           	&.1 		& .1 			     & 2.02 $\pm$ .011 & 2.44 		& .481 $\pm$ .0034 & .6113 	& .0429 $\pm$ .0004 & .0764 		& .0505 $\pm$ .0004 & .0764	\\
$Q_T$    	&.8       	&4          	&-.4           	&.1 		& .1 			     & 2.08 $\pm$ .011 & 2.44 		& .478 $\pm$ .0033 & .6113 	& .0424 $\pm$ .0004 & .0764 		& .0522 $\pm$ .0004 & .0764	\\[1ex]
\hline
\end{tabular}}
\label{tab:ticket_2}
\end{table}

\begin{table}[ht]
\captionsetup{justification=centering}
 \caption{Simulated Results vs. Heavy Traffic Approximations 
Arrivals = Exp($\mu + \sqrt{\mu} \cdot \beta $), Service = Exp($\mu$), Balking = Exp($\theta_b$), 
Reneging = Exp($\theta_r$) }
\centering
\scalebox{0.8}{
\begin{tabular}{c c c c c c c c c c c c c c c }
\hline\hline
$Q_{\alpha}$ & $\rho$ & $\mu$ & $\beta$ & $\theta_b$ & $\theta_r$ & Q & $Q_{ROU}$ & W & $W_{ROU}$ & R  & $R_{ROU}$   & B & $B_{ROU}$  \\ [0.5ex] 
\hline
$Q_S$    	&1.1    	&100        	&1            	&1 		&1                    	     & 7.96 $\pm$ .012 & 7.88 		& .075 $\pm$ 0001 & .0788 		& .0637 $\pm$ .0001 & .0717 		& .075 $\pm$ .0001 & .0717  	\\
$Q_T$    	&1.1   	&100       	&1            	&1 		&1                   	     & 8.21 $\pm$ .013 & 7.88 		& .074 $\pm$ .0001 & .0788 		& .0624 $\pm$ .0001 & .0717  		& .077 $\pm$ .0001 & .0717  	\\[1ex]

$Q_S$    	&1.1    	&25          	&.5           	&1 		&1                   	     & 3.38  $\pm$ .006 & 3.32 		& .125  $\pm$ .0002  & .133 		& .122 $\pm$ .0001  & .1209 		& .091  $\pm$ .0002  & .1209	\\
$Q_T$    	&1.1   	&25    	&.5           	&1 		&1  			     & 3.56 $\pm$ .006 & 3.32 		& .122  $\pm$ .0003 & .133 		& .0088  $\pm$ .0002 & .1209 		& .127  $\pm$ .0002 & .1209	\\[1ex]

$Q_S$    	&1.1   	&4      	&.2          	&1 		&1 			     & 1.24 $\pm$ .003 & 1.20		& .277  $\pm$ .0009  & .301 		& .124  $\pm$ .0004  & .2736 		& .240  $\pm$ .0005  & .2736 	\\ 
$Q_T$    	&1.1   	&4        	&.2          	&1 		&1 			     & 1.37 $\pm$ .003 & 1.20 		& .268  $\pm$ .0009  & .301 		& .117  $\pm$ .0004  & .2736  		& .255  $\pm$ .0005  & .2736      \\[1ex] \hline 

$Q_S$    	&1.01  	&100        	&.1         	&1 		&1 			     & 5.78 $\pm$ .009 & 5.82 		& .055  $\pm$ .0001  & .0582 		& .0483  $\pm$ .0001  & .0576 		& .0552  $\pm$ .0001  & .0576     \\
$Q_T$    	&1.01 	&100        	&.1         	&1 		&1 			     & 5.94 $\pm$ .009 & 5.82 		& .055  $\pm$ .0001  & .0582		& .0476  $\pm$ .0001 & .0576 		& .0567  $\pm$ .0001 & .0576	\\[1ex]

$Q_S$    	&1.01  	&25          	&.05       	&1 		&1 			     & 2.83 $\pm$ .005 & 2.86 		& .105 $\pm$ .0002 & .1146		& .079 $\pm$ .0002 & .1135 		& .103 $\pm$ .0002 & .1135	\\
$Q_T$    	&1.01 	&25          	&.05       	&1 		&1 			     & 2.97 $\pm$ .005 & 2.86 		& .103 $\pm$ .0002 & .1146 		& .077 $\pm$ .0002 & .1135		& .107 $\pm$ .0002 & .1135	\\[1ex]

$Q_S$    	&1.01  	&4            	&.02       	&1 		&1 			     & 1.14 $\pm$ .003 & 1.13 		& .255 $\pm$ .0009 & .2839 		& .118 $\pm$ .0005 & .2811 		& .222 $\pm$ .0006 & .2811	\\
$Q_T$    	&1.01 	&4            	&.02       	&1 		&1 			     & 1.25 $\pm$ .003 & 1.13 		& .249 $\pm$ .0009 & .2839 		& .112 $\pm$ .0004 & .2811		& .235 $\pm$ .0006 & .2811	\\[1ex]\hline

$Q_S$    	&1       	&100       	&0           	&1 		&1 			     & 5.58 $\pm$ .009 & 5.64 		& .0537 $\pm$ .0001 & .0564		& .047 $\pm$ .0001 & .0564 		& .053 $\pm$ .0001 & .0564	\\
$Q_T$    	&1       	&100      	&0           	&1 		&1 			     & 5.74 $\pm$ .009 & 5.64 		& .0531 $\pm$ .0001 & .0564 		& .046 $\pm$ .0001 & .0564		& .055 $\pm$ .0001 & .0564	\\[1ex]

$Q_S$    	&1       	&25         	&0           	&1 		&1 			     & 2.78 $\pm$ .005 & 2.82 		& .104 $\pm$ .0002 & .1128 		& .078 $\pm$ .0002 & .1128 		& .102 $\pm$ .0002 & .1128 	\\
$Q_T$    	&1       	&25        	&0           	&1 		&1 			     & 2.92 $\pm$ .005 & 2.82 		& .102 $\pm$ .0002 & .1128 		& .076 $\pm$ .0002 & .1128  		& .106 $\pm$ .0002 & .1128 	\\[1ex]

$Q_S$    	&1       	&4           	&0           	&1 		&1 			     & 1.12 $\pm$ .003 & 1.12 		& .253 $\pm$ .0010 & .2820 		& .117 $\pm$ .0005 & .2820 		& .220 $\pm$ .0006 & .2820	\\
$Q_T$    	&1       	&4          	&0           	&1 		&1 			     & 1.23 $\pm$ .003 & 1.12 		& .247 $\pm$ .0009 & .2820 		& .110 $\pm$ .0004 & .2820 		& .233 $\pm$ .0006 & .2820	\\[1ex]
\hline

$Q_S$    	&.99    	&100        	&-.1           &1 		& 1                    	     & 5.39 $\pm$ .009 & 5.46 		& .052 $\pm$ .0001 & .0551 		& .045 $\pm$ .0001 & .0546 		& .052 $\pm$ .0001 & .0546 	\\
$Q_T$    	&.99   	&100       	&-.1           &1 		& 1                   	     & 5.54 $\pm$ .009 & 5.46 		& .051 $\pm$ .0001 & .0551 		& .045 $\pm$ .0001 & .0546		& .053 $\pm$ .0001 & .0546	\\[1ex]

$Q_S$    	&.99    	&25          	&-.05         	&1 		& 1                   	     & 2.73 $\pm$ .005 & 2.77 		& .102 $\pm$ .0002 & .1110 		& .077 $\pm$ .0002 & .1121 		& .100 $\pm$ .0002 & .1121	\\
$Q_T$    	&.99   	&25    	&-.05         &1 		& 1  			     & 2.86 $\pm$ .006 & 2.77 		& .100 $\pm$ .0002 & .1110 		& .075 $\pm$ .0002 & .1121 		& .104 $\pm$ .0002 & .1121	\\[1ex]

$Q_S$    	&.99   	&4      	&-.02         &1 		& 1 			     & 1.11 $\pm$ .003 & 1.12 		& .251 $\pm$ .0009 & .2802 		& .117 $\pm$ .0005 & .2831 		& .219 $\pm$ .0006 & .2831	\\ 
$Q_T$    	&.99   	&4        	&-.02         &1 		& 1 			     & 1.22 $\pm$ .003 & 1.12 		& .245 $\pm$ .0009 & .2802 		& .110 $\pm$ .0005 & .2831 		& .232 $\pm$ .0006 & .2831     \\[1ex]\hline

$Q_S$    	&.9  		&100        	&-1         	&1 		& 1 			     & 3.88 $\pm$ .005 & 4.16 		& .0377 $\pm$ .0001 & .0416 		& .034 $\pm$ .0001 & .0462 		& .037 $\pm$ .0001 & .0462      \\
$Q_T$    	&.9 		&100        	&-1         	&1 		& 1 			     & 3.97 $\pm$ .006 & 4.16 		& .0375 $\pm$ .0001 & .0416		& .034 $\pm$ .0001 & .0462 		& .038 $\pm$ .0001 & .0462 	\\[1ex]

$Q_S$    	&.9  		&25          	&-.5      	&1 		& 1 			     & 2.25 $\pm$ .005 & 2.41 		& .085 $\pm$ .0002 & .0964		& .066 $\pm$ .0002 & .1071 		& .083 $\pm$ .0002 & .1071	\\
$Q_T$    	&.9 		&25          	&-.5       	&1 		& 1 			     & 2.36 $\pm$ .005 & 2.41 		& 084 $\pm$ .0002 & .0964 		& .065 $\pm$ .0002 & .1071		& .087 $\pm$ .0002 & .1071	\\[1ex]

$Q_S$    	&.9  		&4            	&-.2       	&1 		& 1 			     & 1.01 $\pm$ .003 & 1.05 		& .229 $\pm$ .0009 & .2646 		& .109 $\pm$ .0005 & .2940 		& .200 $\pm$ .0006 & .2940	\\
$Q_T$    	&.9 		&4            	&-.2       	&1 		& 1 			     & 1.10 $\pm$ .003 & 1.05 		& .224 $\pm$ .0009 & .2646 		& .103 $\pm$ .0004 & .2940 		& .211 $\pm$ .0007 & .2940	\\[1ex]\hline

$Q_S$    	&.8      	&100       	&-2           	&1 		& 1 			     & 2.70 $\pm$ .005 & 3.19 		& .026 $\pm$ .0001 & .0319		& .024 $\pm$ .0001 & .0399 		& .026 $\pm$ .0001 & .0399 	\\
$Q_T$    	&.8       	&100      	&-2           	&1 		& 1 			     & 2.75 $\pm$ .005 & 3.19 		& .026 $\pm$ .0001 & .0319 		& .024 $\pm$ .0001 & .0399 		& .027 $\pm$ .0001 & .0399 	\\[1ex]

$Q_S$    	&.8       	&25         	&-1           	&1 		& 1 			     & 1.80 $\pm$ .003 & 2.08 		& .069 $\pm$ .0002 & .0832 		& .055 $\pm$ .0002 & .1040 		& .067 $\pm$ .0002 & .1040	\\
$Q_T$    	&.8       	&25        	&-1           	&1 		& 1 			     & 1.87 $\pm$ .003 & 2.08 		& .068 $\pm$ .0002 & .0832 		& .054 $\pm$ .0002 & .1040 		& .069 $\pm$ .0002 & .1040	\\[1ex]

$Q_S$    	&.8      	&4           	&-.4           	&1 		& 1 			     & .89 $\pm$ .002 & .9947 		& .204 $\pm$ .0008 & .2486 		& .100 $\pm$ .0005 & .3108 		& .178 $\pm$ .0007 & .3108	\\
$Q_T$    	&.8       	&4          	&-.4           	&1 		& 1 			     & .97 $\pm$ .002 & .9947 		& .200 $\pm$ .0008 & .2486 		& .095 $\pm$ .0004 & .3108 		& .188 $\pm$ .0007 & .3108	\\[1ex]
\hline
\end{tabular}}
\label{tab:ticket_3}
\end{table}

\begin{table}[ht]
\captionsetup{justification=centering}
 \caption{Simulated Results vs. Heavy Traffic Approximations 
Arrivals = Exp($\mu + \sqrt{\mu} \cdot \beta $), Service = LogNormal($1/\mu$,1/$\mu^2$), \\
Balking = Uniform(0,$1/\theta_b$), Reneging = Uniform(0,$1/\theta_r$) }
\centering
\scalebox{0.8}{
\begin{tabular}{c c c c c c c c c c c c c c c }
\hline\hline
$Q_{\alpha}$ & $\rho$ & $\mu$ & $\beta$ & $\theta_b$ & $\theta_r$ & Q & $Q_{ROU}$ & W & $W_{ROU}$ & R  & $R_{ROU}$   & B & $B_{ROU}$  \\ [0.5ex] 
\hline
$Q_S$    	&1.1    	&100        	&1            	&1 		&1                    	     & 7.56 $\pm$ .011 & 7.88 		& .072 $\pm$ 0001 & .0788 		& .0636 $\pm$ .0001 & .0717 		& .076 $\pm$ .0001 & .0717  	\\
$Q_T$    	&1.1   	&100       	&1            	&1 		&1                   	     & 7.79 $\pm$ .012 & 7.88 		& .070 $\pm$ .0001 & .0788 		& .0622 $\pm$ .0001 & .0717  		& .078 $\pm$ .0001 & .0717  	\\[1ex]

$Q_S$    	&1.1    	&25          	&.5           	&1 		&1                   	     & 3.11  $\pm$ .005 & 3.32 		& .114  $\pm$ .0003  & .133 		& .091 $\pm$ .0001  & .1209 		& .124  $\pm$ .0002  & .1209	\\
$Q_T$    	&1.1   	&25    	&.5           	&1 		&1  			     & 3.27 $\pm$ .006 & 3.32 		& .111  $\pm$ .0002 & .133 		& .088  $\pm$ .0002 & .1209 		& .131  $\pm$ .0002 & .1209	\\[1ex]

$Q_S$    	&1.1   	&4      	&.2          	&1 		&1 			     & 1.06 $\pm$ .002 & 1.20		& .232  $\pm$ .001  & .301 			& .118  $\pm$ .0005  & .2736 		& .266  $\pm$ .0006  & .2736 	\\ 
$Q_T$    	&1.1   	&4        	&.2          	&1 		&1 			     & 1.15 $\pm$ .002 & 1.20 		& .225  $\pm$ .001  & .301 			& .105  $\pm$ .0004  & .2736  		& .289  $\pm$ .0007  & .2736      \\[1ex] \hline 

$Q_S$    	&1.01  	&100        	&.1         	&1 		&1 			     & 5.57 $\pm$ .009 & 5.82 		& .052  $\pm$ .0001  & .0582 		& .0470  $\pm$ .0001  & .0576 		& .0557  $\pm$ .0001  & .0576     \\
$Q_T$    	&1.01 	&100        	&.1         	&1 		&1 			     & 5.73 $\pm$ .009 & 5.82 		& .051  $\pm$ .0001  & .0582		& .0463  $\pm$ .0001 & .0576 		& .0573  $\pm$ .0001 & .0576	\\[1ex]

$Q_S$    	&1.01  	&25          	&.05       	&1 		&1 			     & 2.65 $\pm$ .004 & 2.86 		& .098 $\pm$ .0002 & .1146		& .080 $\pm$ .0002 & .1135 		& .106 $\pm$ .0002 & .1135	\\
$Q_T$    	&1.01 	&25          	&.05       	&1 		&1 			     & 2.78 $\pm$ .005 & 2.86 		& .096 $\pm$ .0002 & .1146 		& .078 $\pm$ .0002 & .1135		& .111 $\pm$ .0002 & .1135	\\[1ex]

$Q_S$    	&1.01  	&4            	&.02       	&1 		&1 			     & .99 $\pm$ .002 & 1.13 		& .218 $\pm$ .001 & .2839 			& .113 $\pm$ .0005 & .2811 		& .247 $\pm$ .0006 & .2811	\\
$Q_T$    	&1.01 	&4            	&.02       	&1 		&1 			     & 1.07 $\pm$ .002 & 1.13 		& .211 $\pm$ .001 & .2839 			& .101 $\pm$ .0004 & .2811		& .268 $\pm$ .0007 & .2811	\\[1ex]\hline

$Q_S$    	&1       	&100       	&0           	&1 		&1 			     & 5.38 $\pm$ .008 & 5.64 		& .0516 $\pm$ .0001 & .0564		& .047 $\pm$ .0001 & .0564 		& .054 $\pm$ .0001 & .0564	\\
$Q_T$    	&1       	&100      	&0           	&1 		&1 			     & 5.53 $\pm$ .009 & 5.64 		& .0510 $\pm$ .0001 & .0564 		& .046 $\pm$ .0001 & .0564		& .055 $\pm$ .0001 & .0564	\\[1ex]

$Q_S$    	&1       	&25         	&0           	&1 		&1 			     & 2.59 $\pm$ .005 & 2.82 		& .095 $\pm$ .0003 & .1128 		& .079 $\pm$ .0002 & .1128 		& .104 $\pm$ .0002 & .1128 	\\
$Q_T$    	&1       	&25        	&0           	&1 		&1 			     & 2.72 $\pm$ .005 & 2.82 		& .094 $\pm$ .0002 & .1128 		& .077 $\pm$ .0002 & .1128  		& .109 $\pm$ .0002 & .1128 	\\[1ex]

$Q_S$    	&1       	&4           	&0           	&1 		&1 			     & .98 $\pm$ .002 & 1.12 		& .217 $\pm$ .001 & .2820 			& .113 $\pm$ .0005 & .2820 		& .245 $\pm$ .0006 & .2820	\\
$Q_T$    	&1       	&4          	&0           	&1 		&1 			     & 1.06 $\pm$ .002 & 1.12 		& .210 $\pm$ .001 & .2820 			& .100 $\pm$ .0004 & .2820 		& .265 $\pm$ .0006 & .2820	\\[1ex]
\hline

$Q_S$    	&.99    	&100        	&-.1           &1 		& 1                    	     & 5.18 $\pm$ .008 & 5.46 		& .049 $\pm$ .0001 & .0551 		& .045 $\pm$ .0001 & .0546 		& .052 $\pm$ .0001 & .0546 	\\
$Q_T$    	&.99   	&100       	&-.1           &1 		& 1                   	     & 5.33 $\pm$ .008 & 5.46 		& .049 $\pm$ .0001 & .0551 		& .045 $\pm$ .0001 & .0546		& .053 $\pm$ .0001 & .0546	\\[1ex]

$Q_S$    	&.99    	&25          	&-.05         	&1 		& 1                   	     & 2.54 $\pm$ .005 & 2.77 		& .094 $\pm$ .0002 & .1110 		& .078 $\pm$ .0002 & .1121 		& .102 $\pm$ .0002 & .1121	\\
$Q_T$    	&.99   	&25    	&-.05         &1 		& 1  			     & 2.67 $\pm$ .005 & 2.77 		& .093 $\pm$ .0002 & .1110 		& .075 $\pm$ .0002 & .1121 		& .108 $\pm$ .0002 & .1121	\\[1ex]

$Q_S$    	&.99   	&4      	&-.02         &1 		& 1 			     & .97 $\pm$ .002 & 1.12 		& .214 $\pm$ .0009 & .2802 		& .112 $\pm$ .0005 & .2831 		& .242 $\pm$ .0006 & .2831	\\ 
$Q_T$    	&.99   	&4        	&-.02         &1 		& 1 			     & 1.05 $\pm$ .002 & 1.12 		& .208 $\pm$ .0009 & .2802 		& .101 $\pm$ .0005 & .2831 		& .263 $\pm$ .0006 & .2831     \\[1ex]\hline

$Q_S$    	&.9  		&100        	&-1         	&1 		& 1 			     & 3.77 $\pm$ .007 & 4.16 		& .0364 $\pm$ .0001 & .0416 		& .034 $\pm$ .0001 & .0462 		& .038 $\pm$ .0001 & .0462      \\
$Q_T$    	&.9 		&100        	&-1         	&1 		& 1 			     & 3.86 $\pm$ .007 & 4.16 		& .0346 $\pm$ .0001 & .0416		& .034 $\pm$ .0001 & .0462 		& .039 $\pm$ .0001 & .0462 	\\[1ex]

$Q_S$    	&.9  		&25          	&-.5      	&1 		& 1 			     & 2.13 $\pm$ .004 & 2.41 		& .080 $\pm$ .0002 & .0964		& .068 $\pm$ .0002 & .1071 		& .085 $\pm$ .0002 & .1071	\\
$Q_T$    	&.9 		&25          	&-.5       	&1 		& 1 			     & 2.23 $\pm$ .004 & 2.41 		& .078 $\pm$ .0002 & .0964 		& .065 $\pm$ .0002 & .1071		& .089 $\pm$ .0002 & .1071	\\[1ex]

$Q_S$    	&.9  		&4            	&-.2       	&1 		& 1 			     & .89 $\pm$ .002 & 1.05 		& .198 $\pm$ .0009 & .2646 		& .106 $\pm$ .0005 & .2940 		& .223 $\pm$ .0006 & .2940	\\
$Q_T$    	&.9 		&4            	&-.2       	&1 		& 1 			     & .96 $\pm$ .002 & 1.05 		& .192 $\pm$ .0009 & .2646 		& .096 $\pm$ .0004 & .2940 		& .241 $\pm$ .0007 & .2940	\\[1ex]\hline

$Q_S$    	&.8      	&100       	&-2           	&1 		& 1 			     & 2.65 $\pm$ .004 & 3.19 		& .026 $\pm$ .0001 & .0319		& .024 $\pm$ .0001 & .0399 		& .026 $\pm$ .0001 & .0399 	\\
$Q_T$    	&.8       	&100      	&-2           	&1 		& 1 			     & 2.70 $\pm$ .004 & 3.19 		& .026 $\pm$ .0001 & .0319 		& .024 $\pm$ .0001 & .0399 		& .027 $\pm$ .0001 & .0399 	\\[1ex]

$Q_S$    	&.8       	&25         	&-1           	&1 		& 1 			     & 1.72 $\pm$ .003 & 2.08 		& .065 $\pm$ .0002 & .0832 		& .056 $\pm$ .0002 & .1040 		& .069 $\pm$ .0002 & .1040	\\
$Q_T$    	&.8       	&25        	&-1           	&1 		& 1 			     & 1.79 $\pm$ .003 & 2.08 		& .064 $\pm$ .0002 & .0832 		& .055 $\pm$ .0002 & .1040 		& .072 $\pm$ .0002 & .1040	\\[1ex]

$Q_S$    	&.8      	&4           	&-.4           	&1 		& 1 			     & .79 $\pm$ .002 & .9947 		& .180 $\pm$ .0008 & .2486 		& .090 $\pm$ .0005 & .3108 		& .199 $\pm$ .0007 & .3108	\\
$Q_T$    	&.8       	&4          	&-.4           	&1 		& 1 			     & .86 $\pm$ .002 & .9947 		& .176 $\pm$ .0008 & .2486 		& .099 $\pm$ .0004 & .3108 		& .215 $\pm$ .0007 & .3108	\\[1ex]
\hline
\end{tabular}}
\label{tab:ticket_4}
\end{table}

\end{document}